\newcommand\mtop{1in}
\newcommand\mbottom{1in}
\newcommand\mleft{1.2in}
\newcommand\mright{1.2in}
\providecommand{\mparwidth}{1in}
\providecommand{\mtop}{1in}
\providecommand{\mbottom}{1in}
\providecommand{\mleft}{1.2in}
\providecommand{\mright}{1.2in}
\setlist[enumerate,1]{leftmargin=6ex,topsep=-5pt,label=(\arabic*)}
\setlist[enumerate]{itemsep=7pt}
\setlist[itemize,1]{leftmargin=4ex,topsep=-1em}
\setlist[itemize]{itemsep=5pt}
\setlist[itemize,2]{label=$\circ$}
\setlist[itemize,3]{label={\scalefont{0.6}\color{gray}$\blacktriangleright$}}
\setlist[itemize,4]{label=$\ast$}
\setlist{nolistsep}
\begin{document}

\newcommand{\theoremnumstyle}{section}
%% Actual formatting
\parskip=0.2in \parindent=0in
\allowdisplaybreaks % Allow align environments to split over page breaks
\raggedbottom % Don't leave awkward spaces in the middle of pages

% From Atanas Atanasov: \replacecommand is similar to \providecommand but it initializes the command as necessary even if a previous definition exists.
\newcommand{\replacecommand}[2]{\providecommand{#1}{}\renewcommand{#1}{#2}}

\renewcommand{\arraystretch}{1.5}

%% Miscellaneous \newcommand's
\renewcommand{\l}{\overset}
\newcommand{\into}{\hookrightarrow}
\newcommand{\onto}{\twoheadrightarrow}
\newcommand{\tto}{\longrightarrow}
\newcommand{\too}[1]{\l{#1}\to}
\newcommand{\ttoo}[1]{\l{#1}\longrightarrow}
\newcommand{\intoo}[1]{\l{#1}\into}
\newcommand{\ontoo}[1]{\l{#1}\onto}
\newcommand{\mapstoo}[1]{\l{#1}\mapsto}
\newcommand{\bto}{\leftarrow}
\newcommand{\btto}{\longleftarrow}
\newcommand{\btoo}[1]{\l{#1}\bto}
\newcommand{\bttoo}[1]{\l{#1}\longleftarrow}
\newcommand{\binto}{\hookleftarrow}
\newcommand{\bonto}{\twoheadleftarrow}
\newcommand{\bintoo}[1]{\l{#1}\binto}
\newcommand{\bontoo}[1]{\l{#1}\bonto}
% stupid hack to get the open immersion symbol; see http://tex.stackexchange.com/questions/66723/nudging-overset-characters-downwards
\newcommand{\ointo}{\hspace{3pt}\text{\raisebox{-1.5pt}{$\overset{\circ}{\vphantom{}\smash{\text{\raisebox{1.5pt}{$\into$}}}}$}}\hspace{3pt}}
\newcommand{\lu}{\underset}
\newcommand{\bimplies}{\impliedby}
\newcommand{\ints}{\cap}
\newcommand{\intss}{\bigcap}
\newcommand{\union}{\cup}
\newcommand{\unions}{\bigcup}
\newcommand{\djunion}{\sqcup}
\newcommand{\djunions}{\bigsqcup}
\newcommand{\propersubset}{\subsetneq}
\newcommand{\propersupset}{\supsetneq}
\newcommand{\contains}{\supset}
\newcommand{\semidirect}{\rtimes}
\newcommand{\isom}{\cong}
\newcommand{\normal}{\triangleleft}
\replacecommand{\dsum}{\oplus}
\newcommand{\dsums}{\bigoplus}
\newcommand{\tensor}{\otimes}
\newcommand{\tensors}{\bigotimes}
\newcommand{\cotensor}{{\,\scriptstyle\square}}
\let\originalbar\bar
\renewcommand{\bar}[1]{{\overline{#1}}}
\newcommand{\rlim}{\mathop{\varinjlim}\limits}
\newcommand{\llim}{\mathop{\varprojlim}\limits}
\newcommand{\x}{\times}
\providecommand{\st}{\hspace{2pt} : \hspace{2pt}}
\newcommand{\vv}{\vspace{10pt}}
\newcommand{\til}{\widetilde}
\renewcommand{\hat}{\widehat}
\newcommand{\hhat}{{\hat{\ }}}
\newcommand{\iy}{\infty}
\newcommand{\hteq}{\simeq}
\newcommand{\dd}[2]{\frac{\partial #1}{\partial #2}}
\newcommand{\sm}{\wedge} % stop getting confused about the word "wedge"
\newcommand{\noqed}{\renewcommand{\qedsymbol}{}}
\newcommand{\adjoint}{\dashv}
\newcommand{\wreath}{\wr}
\newcommand{\heart}{\heartsuit}

%http://tex.stackexchange.com/questions/23432/how-to-create-my-own-math-operator-with-limits
\newcommand{\bigast}{\mathop{\vphantom{\sum}\mathchoice%
  {\vcenter{\hbox{\huge *}}}
  {\vcenter{\hbox{\Large *}}}{*}{*}}\displaylimits}

%% \operatorname
\renewcommand{\dim}{\operatorname{dim}}
\newcommand{\diam}{\operatorname{diam}}
\newcommand{\coker}{\operatorname{coker}}
\newcommand{\im}{\operatorname{im}}
\newcommand{\disc}{\operatorname{disc}}
\newcommand{\Pic}{\operatorname{Pic}}
\newcommand{\Der}{\operatorname{Der}}
\newcommand{\ord}{\operatorname{ord}}
\newcommand{\nil}{\operatorname{nil}}
\newcommand{\rad}{\operatorname{rad}}
\newcommand{\ssum}{\operatorname{sum}}
\newcommand{\codim}{\operatorname{codim}}
\newcommand{\cchar}{\operatorname{char}}
\newcommand{\sspan}{\operatorname{span}}
\newcommand{\rank}{\operatorname{rank}}
\newcommand{\Aut}{\operatorname{Aut}}
\newcommand{\Div}{\operatorname{Div}}
\newcommand{\Gal}{\operatorname{Gal}}
\newcommand{\Hom}{\operatorname{Hom}}
\newcommand{\Mor}{\operatorname{Mor}}
\newcommand{\Vect}{\operatorname{Vect}}
\newcommand{\Fun}{\operatorname{Fun}}
\newcommand{\Iso}{\operatorname{Iso}}
\newcommand{\Map}{\operatorname{Map}}
\newcommand{\Ho}{\operatorname{Ho}}
\newcommand{\Mod}{\operatorname{Mod}}
\newcommand{\Tot}{\operatorname{Tot}}
\newcommand{\cofib}{\operatorname{cofib}}
\newcommand{\fib}{\operatorname{fib}}
\newcommand{\hocofib}{\operatorname{hocofib}}
\newcommand{\hofib}{\operatorname{hofib}}
\newcommand{\Maps}{\operatorname{Maps}}
\newcommand{\Sym}{\operatorname{Sym}}
\newcommand{\Diff}{\operatorname{Diff}}
\newcommand{\Tr}{\operatorname{Tr}}
\newcommand{\Frac}{\operatorname{Frac}}
\renewcommand{\Re}{\operatorname{Re}} % don't like the curly default ones
\renewcommand{\Im}{\operatorname{Im}}
\newcommand{\gr}{\operatorname{gr}}
\newcommand{\tr}{\operatorname{tr}}
\newcommand{\End}{\operatorname{End}}
\newcommand{\Mat}{\operatorname{Mat}}
\newcommand{\Proj}{\operatorname{Proj}}
\newcommand{\Th}{\operatorname{Thom}}
\newcommand{\Thom}{\operatorname{Thom}}
\newcommand{\Spec}{\operatorname{Spec}}
\newcommand{\Ext}{\operatorname{Ext}}
\newcommand{\Cotor}{\operatorname{Cotor}}
\newcommand{\Tor}{\operatorname{Tor}}
\newcommand{\vol}{\operatorname{vol}}

%% new operatornames since preamble8
\newcommand{\Set}{\operatorname{Set}}
\newcommand{\Top}{\operatorname{Top}}
\newcommand{\Fin}{\operatorname{Fin}}
\newcommand{\Spaces}{\operatorname{Spaces}}
\newcommand{\Sp}{\operatorname{Sp}}
\newcommand{\Spectra}{\operatorname{Spectra}}
\newcommand{\Spt}{\operatorname{Spt}}
\newcommand{\Comod}{\operatorname{Comod}}
\newcommand{\Spf}{\operatorname{Spf}}
\newcommand{\tmf}{\mathit{tmf}}
\newcommand{\Tmf}{\mathit{Tmf}}
\newcommand{\TMF}{\mathit{TMF}}
\newcommand{\Null}{\operatorname{Null}}
\newcommand{\Fil}{\operatorname{Fil}}
\newcommand{\Sq}{\operatorname{Sq}}
\newcommand{\Stable}{\operatorname{Stable}}
\newcommand{\Poly}{\operatorname{Poly}}
\newcommand{\Cat}{\operatorname{Cat}}
\newcommand{\Orb}{\operatorname{Orb}}
\newcommand{\Exc}{\operatorname{Exc}}
\newcommand{\Part}{\operatorname{Part}}
\newcommand{\Comm}{\operatorname{Comm}}
\newcommand{\Res}{\operatorname{Res}}
\newcommand{\Thick}{\operatorname{Thick}}

\newcommand{\Sm}{\operatorname{Sm}}
\newcommand{\Var}{\operatorname{Var}}
\newcommand{\Frob}{\operatorname{Frob}}
\newcommand{\Rep}{\operatorname{Rep}}
\newcommand{\Ch}{\operatorname{Ch}}
\newcommand{\Shv}{\operatorname{Shv}}
\newcommand{\Corr}{\operatorname{Corr}}
\newcommand{\Span}{\operatorname{Span}}
\newcommand{\Sch}{\operatorname{Sch}}
\newcommand{\ev}{\operatorname{ev}}
\newcommand{\Homog}{\operatorname{Homog}}
\newcommand{\conn}{\operatorname{conn}}
\newcommand{\type}{\operatorname{type}}
\newcommand{\num}{\operatorname{num}}
\newcommand{\Aff}{\operatorname{Aff}}
\newcommand{\Psh}{\operatorname{Psh}}
\newcommand{\sk}{\operatorname{sk}}
\newcommand{\Cart}{\operatorname{Cart}}

\newcommand{\Br}{\operatorname{Br}}
\newcommand{\BW}{\operatorname{BW}}
\newcommand{\Cl}{\operatorname{Cl}}
\newcommand{\Conf}{\operatorname{Conf}}
\newcommand{\Alg}{\operatorname{Alg}}
\newcommand{\CAlg}{\operatorname{CAlg}}
\newcommand{\Lie}{\operatorname{Lie}}
\newcommand{\Coalg}{\operatorname{Coalg}}
\newcommand{\Ab}{\operatorname{Ab}}
\newcommand{\Ind}{\operatorname{Ind}}
\newcommand{\ind}{\operatorname{ind}}
\newcommand{\Fix}{\operatorname{Fix}}
\newcommand{\ho}{\operatorname{ho}}
\newcommand{\coeq}{\operatorname{coeq}}
\newcommand{\CMon}{\operatorname{CMon}}
\newcommand{\Sing}{\operatorname{Sing}}
\newcommand{\Inj}{\operatorname{Inj}}
\newcommand{\StMod}{\operatorname{StMod}}
\newcommand{\Loc}{\operatorname{Loc}}
\newcommand{\Free}{\operatorname{Free}}
\newcommand{\Art}{\operatorname{Art}}
\newcommand{\Gpd}{\operatorname{Gpd}}
\newcommand{\Def}{\operatorname{Def}}
\newcommand{\Hyp}{\operatorname{Hyp}}
\newcommand{\Pre}{\operatorname{Pre}}
\newcommand{\Lat}{\operatorname{Lat}}
\newcommand{\Coords}{\operatorname{Coords}}
\newcommand{\cone}{\operatorname{cone}}
\newcommand{\Spc}{\operatorname{Spc}}
\newcommand{\QCoh}{\operatorname{QCoh}}
\newcommand{\height}{\operatorname{ht}}
\newcommand{\Sub}{\operatorname{Sub}}
\newcommand{\Cone}{\operatorname{Cone}}
\newcommand{\Cocone}{\operatorname{Cocone}}
\newcommand{\Ran}{\operatorname{Ran}}
\newcommand{\Lan}{\operatorname{Lan}}
\newcommand{\LieAlg}{\operatorname{LieAlg}}
\newcommand{\Com}{\operatorname{Com}}
\newcommand{\CoAlg}{\operatorname{CoAlg}}
\newcommand{\Prim}{\operatorname{Prim}}
\newcommand{\Coh}{\operatorname{Coh}}
\newcommand{\FormalGrp}{\operatorname{FormalGrp}}
\newcommand{\Fact}{\operatorname{Fact}}
%END

%% \A etc.
\newcommand{\A}{\mathbb{A}}
\replacecommand{\C}{\mathbb{C}}
\newcommand{\CP}{\mathbb{C}\mathrm{P}}
\newcommand{\E}{\mathbb{E}}
\newcommand{\F}{\mathbb{F}}
\replacecommand{\G}{\mathbb{G}}
\renewcommand{\H}{\mathbb{H}}
\newcommand{\K}{\mathbb{K}}
\newcommand{\M}{\mathbb{M}}
\newcommand{\N}{\mathbb{N}}
\renewcommand{\O}{\mathcal{O}}
\renewcommand{\P}{\mathbb{P}}
\newcommand{\Q}{\mathbb{Q}}
\newcommand{\R}{\mathbb{R}}
\newcommand{\RP}{\mathbb{R}\mathrm{P}}
\newcommand{\V}{\vee}
\newcommand{\T}{\mathbb{T}}
\providecommand{\U}{\mathscr{U}}
\newcommand{\Z}{\mathbb{Z}}
\renewcommand{\k}{\Bbbk}
\newcommand{\g}{\mathfrak{g}}
\newcommand{\m}{\mathfrak{m}}
\newcommand{\n}{\mathfrak{n}}
\newcommand{\p}{\mathfrak{p}}
\newcommand{\q}{\mathfrak{q}}
\renewcommand{\t}{\mathfrak{t}}

%% Large parentheses, etc.
\newcommand{\pa}[1]{\left( {#1} \right)}
\newcommand{\br}[1]{\left[ {#1} \right]}
\newcommand{\cu}[1]{\left\{ {#1} \right\}}
\newcommand{\ab}[1]{\left| {#1} \right|}
\newcommand{\an}[1]{\left\langle {#1}\right\rangle}
\newcommand{\fl}[1]{\left\lfloor {#1}\right\rfloor}
\newcommand{\ceil}[1]{\left\lceil {#1}\right\rceil}
\newcommand{\tf}[1]{{\textstyle{#1}}}
\newcommand{\patf}[1]{\pa{\textstyle{#1}}}

%% Weird constructions
\renewcommand{\mp}{\ \raisebox{5pt}{\text{\rotatebox{180}{$\pm$}}}\ }
\renewcommand{\d}[1]{\ss \mathrm{d}#1}
\newcommand{\imod}{\hspace{-7pt}\pmod}
%\renewcommand{\check}[1]{\overset{\smile}{#1}}

%% Better versions of existing commands
\renewcommand{\epsilon}{\varepsilon}
\renewcommand{\phi}{{\mathchoice{\raisebox{2pt}{\ensuremath\varphi}}{\raisebox{2pt}{\!\! \ensuremath\varphi}}{\raisebox{1pt}{\scriptsize$\varphi$}}{\varphi}}}
\newcommand{\ph}{{\color{white}.\!}}
\newcommand{\tspacer}{{\ensuremath{\color{white}\Big|\!}}}
\newcommand{\chii}{\raisebox{2pt}{\ensuremath\chi}}

% from http://mbork.pl/2009-04-27_Fun_with_quantifiers_%28en%29
\let\originalchi=\chi
\renewcommand{\chi}{{\!{\mathchoice{\raisebox{2pt}{
$\originalchi$}}{\!\raisebox{2pt}{
$\originalchi$}}{\raisebox{1pt}{\scriptsize$\originalchi$}}{\originalchi}}}}

\let\originalforall=\forall
\renewcommand{\forall}{\ \originalforall}

\let\originalexists=\exists
\renewcommand{\exists}{\ \originalexists}

\let\realcheck\check
\newcommand{\vH}{\realcheck{H}}

% quote block: \begin{qu}{leftmargin}{rightmargin}{  ...  }
\newenvironment{qu}[2]
{\begin{list}{}
	  {\setlength\leftmargin{#1}
	  \setlength\rightmargin{#2}}
	  \item[]\footnotesize}
		  {\end{list}}

%%%%%%%%%%%%%%%%%%%%%%%%%
%% newcommands that require certain packages

\newcommand{\itext}{\shortintertext} % requires mathtools
\renewcommand{\u}{\underbracket[0.7pt]} % requires mathtools
\newcommand{\margin}[1]{\marginpar{\raggedright \scalefont{0.7}#1}} % requires scalefnt

% require package pigpen and xy
\newcommand{\pullback}{\ar@{}[rd]|<<{\text{\pigpenfont A}}}
\newcommand{\pushout}{\ar@{}[rd]|>>{\text{\pigpenfont I}}}

% requires xy
\newcommand{\longleftrightarrows}{\xymatrix@1@C=16pt{
\ar@<0.4ex>[r] & \ar@<0.4ex>[l]
}}
\newcommand{\longrightrightarrows}{\xymatrix@1@C=16pt{
\ar@<0.4ex>[r]\ar@<-0.4ex>[r] & 
}}
\newcommand{\mapstto}{\,\xymatrix@1@C=16pt{
\ar@{|->}[r] & 
}\,}
\newcommand{\mapsttoo}[1]{\xymatrix@1@C=16pt{
\ar@{|->}[r]^-{#1} & 
}}
\newcommand{\rightrightrightarrows}{\xymatrix@1@C=16pt{
\ar[r]\ar@<0.8ex>[r]\ar@<-0.8ex>[r] & 
}}
\newcommand{\longleftleftarrows}{\xymatrix@1@C=16pt{
 & \ar@<0.4ex>[l]\ar@<-0.4ex>[l]
}}
\newcommand{\leftleftleftarrows}{\xymatrix@1@C=16pt{
 & \ar[l]\ar@<0.8ex>[l]\ar@<-0.8ex>[l]
}}
\newcommand{\leftleftleftleftarrows}{\xymatrix@1@C=16pt{
 & \ar@<0.8ex>[l]\ar@<0.3ex>[l]\ar@<-0.3ex>[l]\ar@<-0.8ex>[l]
}}
\newcommand{\lcircle}{\ar@(ul,dl)} % arrow circle to the left of the node
\newcommand{\rcircle}{\ar@(ur,dr)}
\newcommand{\intto}{\ \xymatrix@1@C=16pt{
\ar@{^(->}[r] & 
}}

% Colors
% Now this preamble doesn't cause errors when there is no \usepackage{color}
% (you just can't use these commands)
\makeatletter
\@ifundefined{mathds}{
	\newcommand{\Id}{Id}
	}{
	\newcommand{\Id}{\mathds{1}} % requires mathds
	}
\@ifundefined{color}{}{
	\definecolor{darkgreen}{RGB}{0,70,0}
	\definecolor{dgreen}{RGB}{0,100,0}
	\definecolor{purple}{RGB}{120,00,120}
	\definecolor{gray}{RGB}{100,100,100}
	\definecolor{mgreen}{RGB}{0,150,0}
	\definecolor{llgray}{RGB}{230,230,230}
	\definecolor{lgreen}{RGB}{100,200,100}
	\definecolor{mgray}{RGB}{150,150,150}
	\definecolor{lgray}{RGB}{190,190,190}
	\definecolor{maroon}{RGB}{150,0,0}
	\definecolor{lblue}{RGB}{120,170,200}
	\definecolor{mblue}{RGB}{65,105,225}
	\definecolor{dblue}{RGB}{0,56,111}
	\definecolor{orange}{RGB}{255,165,0}
	\definecolor{brown}{RGB}{177,84,15}
	\definecolor{rose}{RGB}{135,0,52}
	\definecolor{gold}{RGB}{177,146,87}
	\definecolor{dred}{RGB}{135,19,19}
	\definecolor{mred}{RGB}{194,28,28}
	\newcommand{\edit}[1]{\itshape{\color{gray}#1}\upshape}
	\newcommand{\fixme}[1]{{\color{maroon}\it{#1}}}
	\newcommand{\citeme}[1]{{\color{orange}\textit{#1}}}
	\newcommand{\later}[1]{{\color{rose}#1}}
	\newcommand{\corr}[1]{{\color{red}\itshape #1}}
	\newcommand{\question}[1]{\itshape{\color{blue}#1}\upshape}
}
\@ifundefined{substack}{}{
    \newcommand{\attop}[1]{{\let\textstyle\scriptstyle\let\scriptstyle\scriptscriptstyle\substack{#1}}}
}
\makeatother

%  vim:ft=tex

\newtheoremstyle{gloss}{\topsep}{\topsep}{}{0pt}{\bfseries}{}{\newline}{\newline
*{\bf #3} }
\theoremstyle{gloss}
\newtheorem*{defstar}{Definition}

\newtheoremstyle{newplain}{20pt}{0pt}{\it}{0pt}{\bfseries}{.}{1ex}{}
\theoremstyle{newplain}

% Number by section (1.1) by default, can override with
% \newcommand{\theoremnumstyle}{} (must be empty)
\ifthenelse{\isundefined\theoremnumstyle}
	{\newtheorem{theorem}{Theorem}[section] 
	\numberwithin{equation}{section}} % Number equations by section, like (1.1) instead of (1)
	{\ifthenelse{\equal\theoremnumstyle{}}
		{\newtheorem{theorem}{Theorem}}
		{\newtheorem{theorem}{Theorem}[section]
		\numberwithin{equation}{section}
		}
	}

\newtheorem{corollary}[theorem]{Corollary}
\newtheorem{claim}[theorem]{Claim}
\newtheorem{lemma}[theorem]{Lemma}
\newtheorem{proposition}[theorem]{Proposition}
\newtheorem{fact}[theorem]{Fact}

\newtheoremstyle{newtextthm}{20pt}{0pt}{}{0pt}{\bfseries}{.}{1ex}{}
\theoremstyle{newtextthm}
\newtheorem{definition}[theorem]{Definition}
\newtheorem{example}[theorem]{Example}
\newtheorem{problem}[theorem]{Problem}
\newtheorem{remark}[theorem]{Remark}
\newtheorem{notation}[theorem]{Notation}

\newtheorem*{theoremstar}{Theorem}
\newtheorem*{lemmastar}{Lemma}
\newtheorem*{corstar}{Corollary}
\newtheorem*{corollarystar}{Corollary}
\newtheorem*{propositionstar}{Proposition}
\newtheorem*{claimstar}{Claim}
\newtheorem*{examplestar}{Example}

% random
\newcommand{\argforrandom}{}
\theoremstyle{newtextthm}
\newtheorem{helperforrandom}[theorem]{\argforrandom}
\newtheorem*{helperforrandomstar}{\argforrandom}
\newenvironment{random}[1]{\renewcommand{\argforrandom}{#1}\begin{helperforrandom}}{\end{helperforrandom}}
\newenvironment{randomstar}[1]{\renewcommand{\argforrandom}{#1}\begin{helperforrandomstar}}{\end{helperforrandomstar}}

\newenvironment{exercise}[1]{\hspace{1pt}\nn \large {\sc #1.}\hv \normalsize
\vspace{10pt}\\ }{} 
\newcommand{\subthing}[1]{\hv\large(#1)\hv\hv \normalsize }

\renewcommand{\showlabelfont}{\tiny\color{red}}

% For some reason doing the obvious thing was slow; this compiles the
% problematic line just once.
\newbox\deltabox
\setbox\deltabox = \hbox{\scalefont{0.2}$\Delta$}
\newcommand{\tensorD}{\l{\copy\deltabox}\tensor}
\newcommand{\cotensorD}{\l{\copy\deltabox}\cotensor}
\newlength\len
\newcommand{\ED}[2]{\setbox3=\hbox{$E^{#1}_{#2}$}\setbox4=\hbox{$E$}\setlength\len{-0.5\wd3+0.5\wd4}\l{\raisebox{-2pt}
{\hspace{1.25\len}\copy\deltabox}}{E^{#1}_{#2}}}

\newbox\lbox
\setbox\lbox = \hbox{\scalefont{0.2}$L$}
\newbox\rbox
\setbox\rbox = \hbox{\scalefont{0.2}$R$}
\newcommand{\tensorL}{\l{\copy\lbox}\tensor}
\newcommand{\tensorR}{\l{\copy\rbox}\tensor}
\newcommand{\DL}[1]{\l{\!\!{\copy\lbox}}{D^{#1}_\Gamma}}
\newcommand{\DR}[1]{\l{\!\!{\copy\rbox}}{D^{#1}_\Gamma}}
\newcommand{\CL}[1]{\l{\!\!{\copy\lbox}}{C^{#1}_\Gamma}}
\newcommand{\CLR}[1]{\l{\!\!{\copy\rbox}}{C^{#1}_\Gamma}}
\newcommand{\CLPhi}[1]{\l{\!\!{\copy\lbox}}{C^{#1}_\Phi}}
\newcommand{\DLPhi}[1]{\l{\!\!{\copy\lbox}}{D^{#1}_\Phi}}
\newcommand{\barDL}[1]{\l{\!\!{\copy\lbox}}{\originalbar{D}^{#1}_\Gamma}}
\newcommand{\DD}[1]{\l{\!\!{\copy\deltabox}}{D^{#1}_\Gamma}}
\newcommand{\CD}[1]{\l{\!\!{\copy\deltabox}}{C^{#1}_\Gamma}}
\newcommand{\DDPhi}[1]{\l{\!\!{\copy\deltabox}}{D^{#1}_\Phi}}
\renewcommand{\cotensor}{\,\text{\scalefont{0.7}$\square$}}
\newcommand{\Imu}{I_{\Phi,\mu}}

\newcommand{\ann}[1]{\langle {#1}\rangle}
\newcommand{\si}{\l{\textit{st}}\isom}
\newcommand{\cc}{\ ;\ }

\newcommand{\EU}{{}^{U\hspace{-3pt}}E}

\renewcommand{\showlabelfont}{\tiny\slshape\color{mgreen}}

\author{Eva Belmont}
\title{A Cartan-Eilenberg spectral sequence for a non-normal extension}
\lhead{}\chead{\it\small A Cartan-Eilenberg spectral sequence
for a non-normal extension}\rhead{}\lfoot{}\cfoot{\thepage}\rfoot{}

\maketitle

\begin{abstract}
Let $\Phi\to \Gamma\to \Sigma$ be a conormal extension of Hopf algebras over a
commutative ring $k$, and let
$M$ be a $\Gamma$-comodule. The Cartan-Eilenberg spectral sequence $$ E_2 =
\mathrm{Ext}_\Phi(k,\mathrm{Ext}_\Sigma(k,M)) \implies \mathrm{Ext}_\Gamma(k,M)$$ is a standard tool for
computing the Hopf algebra cohomology of $\Gamma$ with coefficients in $M$ in
terms of the cohomology of the pieces $\Phi$ and $\Sigma$. Bruner and Rognes,
generalizing a construction of Davis and Mahowald, have introduced a
generalization of the Cartan-Eilenberg spectral sequence converging to
$\mathrm{Ext}_\Gamma(k,M)$ that can be defined when $\Phi =
\Gamma\cotensor_\Sigma k$ is compatibly an algebra and a $\Gamma$-comodule. We
offer a concrete cobar-like construction that fits into their framework, and
show how this work fits into a larger story. In particular, we show that this
spectral sequence is isomorphic, starting at the $E_1$ page, to both the Adams
spectral sequence in the stable category of $\Gamma$-comodules as studied by
Margolis and Palmieri, and to a filtration spectral sequence on the cobar
complex for $\Gamma$ originally due to Adams. We obtain a description of the
$E_2$ term under an additional flatness assumption. We discuss applications to
computing localizations of the Adams spectral sequence $E_2$ page.
\end{abstract}

\section{Introduction}
Suppose $\Gamma$ is a Hopf algebra over a commutative ring $k$ and we wish to
calculate its Hopf algebra cohomology $\Ext_\Gamma(k,k)$. If $k=\F_p$ and
$\Gamma$ is a group ring $\F_p[G]$, then this cohomology is by definition the
mod-$p$ group cohomology $H^*(G,\F_p)$, and a short exact sequence of groups $1\to
N\to G\to G/N\to 1$ gives rise to a Lyndon-Hochschild-Serre spectral sequence
$$ E_2^{**} = H^*(G/N,H^*(N,\F_p)) \implies H^*(G,\F_p) $$
(equivalently $\Ext^*_{\F_p[G/N]}(\F_p,\Ext^*_{\F_p[N]}(\F_p,\F_p))\implies
\Ext^*_{\F_p[G]}(\F_p,\F_p)$). The analogue for more general $\Gamma$ is called the
Cartan-Eilenberg spectral sequence (alternatively the extension spectral
sequence or the change-of-rings spectral sequence); there are a number of
variants that are defined in various settings.

In the setting described in Cartan and Eilenberg's classic book (see
\cite[XVI.5(2)$_4$]{cartan-eilenberg}), one begins with an algebra map
$\Gamma\to \Sigma$ and a $\Gamma$-module $M$. Then the
composite functor spectral sequence associated to the functors
$\Hom_\Sigma(k, -)\circ \Hom_\Gamma(\Sigma, -)$ has the form
$$ E_2 = \Ext_\Sigma(k,\Ext_\Gamma(\Sigma,M))\implies \Ext_\Gamma(k,M). $$
Given a normal algebra map $i:\Phi\to \Gamma$ (i.e., the left ideal $\Gamma\cdot
i(\Phi)$ is also a right ideal), one can define a quotient $\Gamma\tensor_\Phi
k$ that is an algebra. If $\Sigma$ can be expressed as such a quotient where
$\Gamma$ is projective as a $\Phi$-module, then one may apply the change of
rings theorem to obtain the more familiar form \cite[Theorem
XVI.6.1]{cartan-eilenberg}
\begin{equation} \label{cess-model}E_2 = \Ext_\Sigma(k,\Ext_\Phi(k,M))\implies
\Ext_\Gamma(k,M). \end{equation}
This reduces to the group cohomology example above in the case when the sequence
$\Phi\to \Gamma\to \Sigma$ is $\F_p[N]\to \F_p[G]\to \F_p[G/N]$. Normality of
$i$, which guarantees the quotient $\Sigma$ is an algebra, is the analogue of
normality of the subgroup $N$.

The case where $\Gamma$ is a Hopf algebra (or Hopf algebroid)
is of particular interest to stable homotopy theory.
%A theorem of Milnor-Moore
%\cite[Proposition 4.9]{milnor-moore} says that (in the case where all Hopf
%algebras involved are connected) the setup above is equivalent to having a
%diagram $\Phi\to \Gamma\to \Sigma$ of Hopf algebras along with a compatible
%isomorphism $\Gamma\isom \Phi\tensor \Sigma$ of left $\Phi$-modules and right
%$\Sigma$-comodules.
A detailed account of the Cartan-Eilenberg spectral sequence
in the setting of Hopf algebroids can be found in \cite[Appendix
A1.3]{green}. The basic setup involves a normal extension of Hopf algebras,
i.e., a sequence of Hopf algebra maps $\Phi\intoo{i} \Gamma\ontoo{\pi} \Sigma:=
\Gamma\tensor_\Phi k$ such that $i$ is a normal map of $k$-algebras, and $i$ and
$\pi$ split over $k$. The $E_2$ page has the same form as \eqref{cess-model}.
One motivating example is the extension generated by the quotient $A\to
A/\beta$ of the Steenrod algebra obtained by modding out by the action of the
Bockstein. Working in the Hopf algebra setting with more general coefficient
comodules, Singer \cite[Chapter 4]{singer-book} identifies the Cartan-Eilenberg
spectral sequence as the spectral sequence of a bi-cosimplicial
commutative algebra and describes a general theory of power operations acting
on it.

In this paper we assume weaker hypotheses: in particular, $\Sigma$ need not be
an algebra, in which case the classical $E_2$ page and the first of the
composite functors is not even defined. Davis and Mahowald
\cite{davis-mahowald-A(2)} were the first to develop a Cartan-Eilenberg type
spectral sequence when they studied $\Ext_{A(2)}(M, \F_2)$ for $A(2)$-modules
$M$ using the non-normal map $A(1)\to A(2)$ of Hopf algebras. In this paper we
study a generalization of their construction due to Bruner and Rognes
\cite{bruner-rognes}.

As our main application is to the localized cohomology of the dual Steenrod
algebra, where it is more convenient to work with comodules than modules, the
rest of this discussion will pertain to the dual setting to that described
above. Henceforth, $\Ext_\Gamma$ will denote comodule Ext---that is, derived
functors of Hom in the category of comodules over a Hopf algebra $\Gamma$. The
classical Cartan-Eilenberg spectral sequence in this setting has the form $$ E_2
= \Ext_\Phi(k,\Ext_\Sigma(k,M))\implies \Ext_\Gamma(k,M) $$ where $M$ is a
$\Phi$-comodule and $\Phi\to \Gamma\to \Sigma$ is a conormal extension of Hopf
algebras (i.e., an extension such that $\Gamma\cotensor_\Sigma k =
k\cotensor_\Sigma \Gamma$ as sub-vector spaces).

%\begin{theorem}[Bruner-Rognes]
%Given a surjection $\Gamma \to \Sigma$ of connected Hopf algebras, a
%$\Gamma$-comodule $N$, and a $\Gamma$-comodule resolution of $k$ of the form
%$(\Gamma\cotensor_\Sigma k)\tensor R^*$ for $\Gamma$-comodules $R^n$, there is a
%strongly convergent spectral sequence
%$$ E_1^{***} = \Ext_\Sigma^*(k,R^*\tensor N) \implies \Ext_\Gamma^*(k,M).$$
%\end{theorem}
%Bruner and Rognes are primarily interested in the case where the complex
%$(\Gamma\cotensor_\Sigma k)\tensor R^*$ is a minimal resolution. We are
%interested in this construction in the case that $\Gamma\cotensor_\Sigma k$ is a
%$\Gamma$-comodule algebra, in which case we describe how to define a particular
%resolution $(\Gamma\cotensor_\Sigma k)\tensor R^*$ which is related to the cobar
%complex.

Suppose $\Gamma\to \Sigma$ is a surjection of Hopf algebras, and the map
$\Phi\colonequals \Gamma\cotensor_\Sigma k \to \Gamma$ is a map of
$\Gamma$-comodule algebras. For a $\Gamma$-comodule $M$, we consider 
three spectral sequences converging to $\Ext_\Gamma^*(k,M)$.
\begin{enumerate} 
\item Given a $\Gamma$-comodule resolution of $k$ of the form
$\Phi\tensor R^0\to \Phi\tensor R^1 \to \dots$ for $\Gamma$-comodules $R^n$, Bruner and Rognes describe a
Cartan-Eilenberg type spectral sequence converging to $\Ext_\Gamma^*(k,M)$.%
\footnote{The spectral sequence we actually discuss computes
$\Cotor_\Gamma(M,N)$, not $\Ext_\Gamma(M,N)$. We gloss over this difference
because these are naturally isomorphic when $M=k$, which is our main case of
interest. An analogous construction would work for comodule Ext in general,
however.}
While Bruner and Rognes are primarily interested in the case where $\Sigma$ is
an exterior algebra and the complex $\Phi\tensor R^*$ is a minimal resolution,
we construct a resolution $R^n = \Phi^{\tensor n}$ which, in the case $\Phi$ is
a coalgebra, reduces to the $\Phi$-cobar resolution of $M$. Our construction
just depends on a $\Gamma$-comodule map $\Phi\to \Gamma$ where $\Phi$ is a
$\Gamma$-comodule algebra; the presence of $\Sigma$ such that $\Phi =
\Gamma\cotensor_\Sigma k$ simply gives a more convenient form for the $E_1$ page.

\item Margolis \cite{margolis} and Palmieri \cite{palmieri-book} have studied
the generalized Adams spectral sequence constructed in the category of stable
$\Gamma$-comodules, a close cousin of the derived category of
$\Gamma$-comodules. If $\Phi$ is a $\Gamma$-comodule algebra and $M$ is a
$\Gamma$-comodule, then the $\Phi$-based Adams spectral sequence in
$\Stable(\Gamma)$ for $M$ converges to $\Ext_\Gamma(k,M)$.

\item The third construction is a filtration spectral sequence on the cobar
complex on $\Gamma$ due to Adams \cite{adams-hopf-invariant}. Though originally
studied in the case where $\Phi\to \Gamma\to \Sigma$ is an extension of Hopf
algebras, the filtration spectral sequence
itself may be defined in the setting here (where $\Phi$ is a $\Gamma$-comodule
algebra) without
modification. 
\end{enumerate}
The main results of this paper can be summarized as follows.
\begin{theorem}\label{thm:equivalence}
The spectral sequences (1), (2), and (3) coincide at the $E_1$ page, which has
the form
$$ E_1^{s,t} = \Ext^t_\Sigma(k,\bar{\Phi}^{\tensor s}\tensor M) $$
where $\bar{\Phi}$ denotes the coaugmentation ideal.
\end{theorem}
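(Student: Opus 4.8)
The plan is to realize all three spectral sequences as arising from a single filtered cochain complex: the $\Gamma$-cobar complex $C^\bullet_\Gamma(M)$ computing $\Ext_\Gamma(k,M)=\Cotor_\Gamma(k,M)$, equipped with the decreasing ``$\Phi$-adic'' filtration $F^\bullet$ in which a reduced cobar monomial $[\gamma_1|\cdots|\gamma_n]\tensor m$ lies in $F^s$ when at least $s$ of the factors $\gamma_i$ lie in the kernel of the surjection $\bar{\Gamma}\onto\bar{\Sigma}$ (equivalently, are detected by $\bar{\Phi}$). Spectral sequence (3) is by definition the filtration spectral sequence of $(C^\bullet_\Gamma(M),F^\bullet)$, so the first task is to compute its $E_1$ page; the other two are then compared to it.

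For (3) I would form the associated graded $\gr^s C^\bullet_\Gamma(M)$ and observe that the part of the cobar differential preserving filtration is exactly the $\Sigma$-cobar differential acting on the $\bar{\Sigma}$-factors, with the $s$ distinguished $\bar{\Phi}$-factors and $M$ carried along as coefficients. The structural input is that, although $\Gamma$ is not normal and $\Sigma$ need not even be an algebra, the hypothesis $\Phi=\Gamma\cotensor_\Sigma k$ forces the associated graded of $\Gamma$ along $F^\bullet$ to split into a $\Sigma$-part and a $\bar{\Phi}$-part. Granting this, the $d_0$-homology is $\Cotor^t_\Sigma(k,\bar{\Phi}^{\tensor s}\tensor M)$, yielding $E_1^{s,t}=\Ext^t_\Sigma(k,\bar{\Phi}^{\tensor s}\tensor M)$. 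I expect this to be the \emph{main obstacle}: in the classical normal case one has $\Gamma\isom\Phi\tensor\Sigma$ and the computation is routine, whereas here the splitting holds only on $\gr$, so one must check that no cross-terms of the cobar differential survive to $d_0$ and that the resulting complex genuinely computes $\Cotor_\Sigma$ with the stated coefficients.

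For (1), which comes from applying $\Cotor_\Gamma(M,-)$ via cobar to the resolution $\Phi\tensor\Phi^{\tensor\bullet}\to k$ and filtering by resolution degree, the change-of-rings isomorphism $\Cotor_\Gamma(M,\Phi\tensor N)\isom\Cotor_\Sigma(M,N)$—valid because $\Phi=\Gamma\cotensor_\Sigma k$ is a $\Gamma$-comodule algebra, with $N$ given the trivial $\Sigma$-coaction—collapses the cobar direction and produces the same $E_1$ formula (the normalization of $\Phi^{\tensor s}$ accounting for the passage to $\bar{\Phi}^{\tensor s}$). To promote this to an identification of spectral sequences, and not merely of $E_1$ pages, I would exhibit an explicit filtered quasi-isomorphism between the totalization of the Bruner--Rognes double complex and $(C^\bullet_\Gamma(M),F^\bullet)$ sending resolution degree $s$ to $\bar{\Phi}$-word-length $s$; this is the standard ``cobar complex as iterated comonadic resolution'' comparison, with the extra bookkeeping that the two filtrations match.

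Finally, (2) is built from a canonical Adams tower in $\Stable(\Gamma)$ whose defining cofiber sequences apply the comonad $\bar{\Phi}\tensor(-)$, so its normalized algebraic resolution is precisely the $\Phi$-cobar resolution underlying (1); identifying the mapping groups $[k,-]_{\Stable(\Gamma)}$ with $\Cotor_\Sigma$-groups by the same change of rings shows that the exact couples of (1) and (2) agree, hence so do their spectral sequences from $E_1$ onward. The remaining, secondary obstacle is to verify that the three a priori distinct filtrations—$\bar{\Phi}$-word-length, resolution degree, and Adams filtration—literally coincide under these comparison maps, rather than merely inducing abstractly isomorphic pages; with that in hand all three $E_1$ pages are canonically identified with $\Ext^t_\Sigma(k,\bar{\Phi}^{\tensor s}\tensor M)$.
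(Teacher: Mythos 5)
Your comparison of (1) and (2) is essentially the paper's argument: both exact couples arise by applying $\Ext_\Gamma(k,-)$ to the tower of cofiber sequences $\bar{\Phi}^{\tensor s+1}\tensor M\to(\bar{\Phi}^{\tensor s}\tensor M)[1]\to(\Phi\tensor\bar{\Phi}^{\tensor s}\tensor M)[1]$ in $\Stable(\Gamma)$, and the $E_1$ page of (1) is computed by the change-of-rings isomorphism $\Cotor_\Gamma(k,\Phi\tensorD N)\isom\Cotor_\Sigma(k,N)$ of Corollary \ref{change-of-rings}. That part of your outline is sound, modulo details.

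The gap is in your treatment of (3). You propose to read off its $E_1$ page directly from the associated graded of the cobar complex, on the grounds that ``the associated graded of $\Gamma$ splits into a $\Sigma$-part and a $\bar{\Phi}$-part.'' No such splitting exists in the non-normal case: the filtration counts cobar entries lying in $G=\ker(\Gamma\to\Sigma)=\bar{\Phi}\Gamma$, which is strictly larger than $\bar{\Phi}$, and the distinguished entries of a monomial in $\gr^s$ lie in $G$ and sit in arbitrary positions of the word. Two nontrivial steps are needed to arrive at $\Ext^t_\Sigma(k,\bar{\Phi}^{\tensor s}\tensor M)$. First, one must identify $E_0^{s,*}$ with $E_0^{0,*}$ of a twisted coefficient object $G(s)\cotensor_\Sigma M$, where $G(s)=G\cotensor_\Sigma\cdots\cotensor_\Sigma G$; this is not an isomorphism of complexes but a chain equivalence, produced in the paper via Ossa's auxiliary filtration $\til{F}$ and a contractibility argument (Lemma \ref{G-cobar-quotient}). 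Second, one must identify $G(s)\cotensor_\Sigma M$ with $\bar{\Phi}^{\tensor s}\tensor M$ carrying its diagonal $\Sigma$-coaction; this is exactly the iterated shear isomorphism $\mathcal{N}\DDPhi s(M)\isom\Gamma\cotensor_\Sigma G(s)\cotensor_\Sigma M$ of Lemma \ref{shear-restriction-G}, whose formula involves the antipode and occupies the appendix. Your proposal never invokes the antipode, so it has no mechanism for this translation; in the conormal case the issue is invisible because $\Gamma\isom\Phi\tensor\Sigma$ gives $G\isom\bar{\Phi}\tensor\Sigma$, but that is precisely the hypothesis being dropped. Relatedly, the filtered quasi-isomorphism you posit between the totalized double complex and the cobar complex is the paper's map $\theta=e\circ(S_c^n\tensor S^n)$, and even verifying that it is filtration-preserving already requires the same shear computation, so the ``bookkeeping that the two filtrations match'' is not an afterthought but the technical core of the comparison.
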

These three constructions have different advantages: since (1) is the spectral
sequence associated to a bi-cosimplicial commutative algebra, it has power
operations due to the general theory of Sawka \cite{sawka}. The Adams spectral
sequence presentation (2) naturally comes with an $E_2$ term under an analogue
of the classical Adams flatness condition (see Corollary \ref{cor:E_2} below). The filtration spectral sequence
presentation (3) is convenient for explicit computations in low degrees. Our
comparison theorems enable one to use all of these desirable properties without
regard to a choice of underlying model.

\begin{corollary}\label{cor:E_2}
If $\Ext^*_\Sigma(k,\Phi)$ is flat as a module over $\Ext_\Sigma^*(k,k)$, then
the spectral sequences (1), (2), and (3) have $E_2$ page
$$ E_2^{**} \isom \Ext^*_{\Ext_\Sigma(k,\Phi)}(\Ext_\Sigma(k,k),
\Ext_\Sigma(k,M)). $$
\end{corollary}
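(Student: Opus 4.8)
The plan is to deduce the corollary from Theorem \ref{thm:equivalence} by computing the $E_2$ page of just one of the three spectral sequences. Since Theorem \ref{thm:equivalence} gives isomorphisms of $E_1$ pages (commuting with $d_1$, hence inducing isomorphisms on $E_2$), all three share a common $E_2$ page, so it suffices to compute the cohomology of the complex $(E_1^{*,t}, d_1)$, where $E_1^{s,t} = \Ext^t_\Sigma(k, \bar{\Phi}^{\tensor s}\tensor M)$ and $d_1$ is the cobar-type differential induced by the cosimplicial structure of the resolution $R^\bullet = \Phi^{\tensor\bullet}$. Throughout I write $R = \Ext_\Sigma(k,k)$ and $P = \Ext_\Sigma(k,\Phi)$, so the hypothesis reads: $P$ is flat as an $R$-module.

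Next I would record the two structural inputs. First, the change-of-rings isomorphism $\Ext_\Gamma(k,\Phi\tensor X)\isom\Ext_\Sigma(k,X)$, valid for any $\Gamma$-comodule $X$: this follows from the shearing isomorphism $\Phi\tensor X\isom\Gamma\cotensor_\Sigma X$ together with the standard change of rings for the cotensor, using $\Phi=\Gamma\cotensor_\Sigma k$. This is exactly the identification underlying presentation (2), in which $X\mapsto\Ext_\Gamma(k,\Phi\tensor X)$ is the $\Phi$-homology functor of the $\Phi$-based Adams spectral sequence in $\Stable(\Gamma)$; evaluating at $X = k,\Phi,M$ produces $\Phi$-homologies $R$, $P$, and $\Ext_\Sigma(k,M)$. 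Second, the algebra structure on the $\Gamma$-comodule algebra $\Phi$ equips the $\Phi$-Adams resolution with a coproduct, and hence endows $P$ with the structure of a coalgebra over $R$ and $\Ext_\Sigma(k,M)$ with the structure of a $P$-comodule; under the flatness hypothesis $(R,P)$ is a flat Hopf algebroid, exactly as in the classical Adams situation treated by Margolis \cite{margolis} and Palmieri \cite{palmieri-book}.

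The crux is a Künneth isomorphism. Combining the change-of-rings isomorphism with the flat Künneth theorem in $\Stable(\Gamma)$—available precisely because $P$ is flat over $R$—one obtains, by induction on $s$, natural isomorphisms $\Ext_\Sigma(k,\bar{\Phi}^{\tensor s}\tensor M)\isom \bar{P}^{\tensor_R s}\tensor_R\Ext_\Sigma(k,M)$, where $\bar{P}=\coker(R\to P)$ is the coaugmentation coideal. (The base case $\Ext_\Sigma(k,\bar{\Phi})\isom\bar{P}$ uses that the unit $k\to\Phi$ is split over $\Sigma$, so that $\Ext_\Sigma(k,-)$ carries the short exact sequence $0\to k\to\Phi\to\bar{\Phi}\to 0$ to a split one.) I would then check that under these identifications $d_1$ goes over to the cobar differential of the coalgebra $P$ with coefficients in the $P$-comodule $\Ext_\Sigma(k,M)$; this amounts to matching the coface maps of the resolution $\Phi^{\tensor\bullet}$ with the comultiplication of $P$, which is how that comultiplication was produced in the previous step.

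Granting this, the complex $(E_1^{*,t},d_1)$ is identified with the normalized cobar complex computing $\Cotor_P(R,\Ext_\Sigma(k,M))$, so passing to cohomology yields
\[
E_2^{**}\isom\Cotor_P(R,\Ext_\Sigma(k,M))=\Ext_{\Ext_\Sigma(k,\Phi)}(\Ext_\Sigma(k,k),\Ext_\Sigma(k,M)),
\]
which is the assertion. I expect the main obstacle to be neither the reduction nor the formal bookkeeping, but the Künneth step together with the compatibility of $d_1$ with the cobar differential: one must verify that flatness of $P$ over $R$ genuinely forces the higher derived terms of the stable-category Künneth spectral sequence to vanish for the comodules $\bar{\Phi}^{\tensor s}\tensor M$ that actually appear, and that the coalgebra structure induced on $P$ from the Adams resolution coincides with the one implicit in $d_1$. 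This is the exact analogue of the classical verification that, when $E_*E$ is flat over $E_*$, the $E_1$ page of the $E$-Adams spectral sequence is the cobar complex of the Hopf algebroid $(E_*,E_*E)$.
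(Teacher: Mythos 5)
Your proposal is correct and follows the same strategy as the paper: reduce via the comparison theorem to a single model of the spectral sequence and then run the Adams-flatness mechanism for the $\Phi$-based Adams spectral sequence in $\Stable(\Gamma)$. The only substantive difference is one of economy: the paper's proof (Corollary \ref{MPASS-flatness}) identifies the Cartan--Eilenberg exact couple with the MPASS exact couple via Theorem \ref{CESS=MPASS}, \emph{quotes} the general MPASS $E_2$ identification \eqref{MPASS-E_2} from the Margolis--Palmieri framework, and then translates both the flatness hypothesis and the answer into $\Sigma$-terms using the change-of-rings isomorphism of Corollary \ref{change-of-rings}, whereas you re-derive that quoted identification by hand (splitting of the unit $k\to\Phi$, flat K\"unneth isomorphism $\Ext_\Sigma(k,\bar{\Phi}^{\tensor s}\tensor M)\isom \bar{P}^{\tensor_R s}\tensor_R\Ext_\Sigma(k,M)$, and matching $d_1$ with the cobar differential of the Hopf algebroid $(\Ext_\Sigma(k,k),\Ext_\Sigma(k,\Phi))$). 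The two delicate points you flag --- the vanishing needed for the K\"unneth step and the compatibility of $d_1$ with the induced comultiplication --- are exactly what the paper outsources to the cited general theory, so your argument has no gap relative to the paper's own standard of proof; it simply unwinds the citation.
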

In Section \ref{section:applications} we give an example of a setting in which
the new $E_2$ page is defined but the classical Cartan-Eilenberg spectral
sequence is not; in general, we expect such settings to involve computing
localized Ext groups.
Given a non-nilpotent element $x\in \Ext_\Gamma(k,k)$ whose image in
$\Ext_\Sigma(k,k)$ is non-nilpotent, one may localize the entire spectral
sequence construction, obtaining a spectral sequence that, in good cases,
converges to $x^{-1} \Ext_\Gamma(k,M)$ (though we note that convergence must be
checked separately). In many cases of interest, the $E_2$ condition holds only
after inverting $x$, in which case the $E_2$ page of the localized spectral
sequence has the form
$$ E_2^{**} \isom \Ext^*_{x^{-1}\Ext_\Sigma(k,\Phi)} (x^{-1}\Ext_\Sigma(k,k),\
x^{-1}\Ext_\Sigma(k,M)).$$
(The idea is that the flatness condition may hold for the
$x$-local part of $\Ext_\Sigma(k,\Phi)$ but not the torsion part.)

Localized $\Ext$ groups have been studied in many contexts, often because the
localization represents the more tractable part of an otherwise complicated
$\Ext$ group of interest.
%For example, information about $h_1$-local $\Ext$ over the $\C$-motivic
%Steenrod algebra, which was computed in full in \cite{guillou-isaksen-eta}, is
%a useful tool for computing the $E_2$ page of the $\C$-motivic Adams spectral
%sequence (see \cite{stable-stems}).
For example, Davis and Mahowald's work on $v_1$-local Ext groups over the
Steenrod algebra \cite{davis-mahowald-v_1} is an important part of understanding
the $E_2$ page of the $bo$-based Adams spectral sequence (also see
\cite{bo-Adams} for a detailed study of the relationship between the
$v_1$-periodic and $v_1$-torsion parts). More generally, for a type $n$ spectrum
$X$, the $v_n$-localized Adams $E_2$ page for $X$ is the algebraic analogue of
the chromatic localization $\pi_*(v_n^{-1} X)$. In Section
\ref{section:applications} we use the techniques of this paper to give a new
proof of May and Milgram's calculation of the $p$-towers in the Adams spectral
sequence $E_2$ term for a finite spectrum; this calculates the $E_2$ page above
a line of slope ${1\over 2p-2}$. This is a much easier analogue of the author's
study \cite{b10-paper} of a different localization of the Adams $E_2$ page for
the sphere at $p=3$.

\subsection*{Outline} We begin in Section \ref{section:applications} by
discussing the motivating application for this work, a localization of the Adams
$E_2$ page for the sphere. In Section \ref{section:CESS}, we review the
classical construction of the Cartan-Eilenberg spectral sequence for an
extension of Hopf algebras $\Phi\to \Gamma \to \Sigma$, and define a variation
(the spectral sequence mentioned in (1)) that makes sense when $\Phi$ is only a
$\Gamma$-comodule algebra. The main step is to replace the $\Phi$-cobar
resolution of a $\Phi$-comodule $M$ (which does not make sense when $\Phi$ does
not have a coalgebra structure) with a $\Gamma$-comodule resolution. This
amounts to describing a specific resolution $\Phi\tensor R^*$ for use in Bruner
and Rognes' construction.

In Section \ref{section:MPASS}, we review Margolis and
Palmieri's Adams spectral sequence, and prove that the spectral sequence (1)
coincides with this one at $E_1$. This extends a remark of Palmieri
\cite[\S1.4]{palmieri-book}, who notes that the spectral sequence he studies
coincides with the Cartan-Eilenberg spectral sequence in the case that the
extension is conormal (the coalgebra analogue of the normality condition
mentioned above).

Section \ref{section:CESS-vs-filtration} is devoted to comparing the spectral
sequences (1) and (3). Adams \cite[\S2.3]{adams-hopf-invariant} mentions
(without proof) that (3) coincides with the classical Cartan-Eilenberg spectral
sequence in the case when the latter is defined. A proof of this fact is given
in \cite[A1.3.16]{green}, attributed to Ossa. Our comparison proof is based on
Ossa's. This involves the use of explicit formulas for the
iterated shear isomorphism and its inverse, which are established in the
appendix.

\subsection*{Acknowledgements} The work in this paper represents part of my
thesis work. I am greatly indebted to Haynes Miller, my graduate
advisor, for his guidance and support throughout this project, and more
specifically for an early version of the construction in Section
\ref{subsection:new-CESS}. I'd also like to thank Paul Goerss for informing me
of the existence of \cite{davis-mahowald-A(2)} and \cite{bruner-rognes}, and
Bruner and Rognes for providing me with a draft of their work on this subject.

\section{Application: Localized Adams $E_2$ page}\label{section:applications}
The motivation for the work in this paper was an attempt to calculate a
localization of $\Ext_P(\F_3,\F_3)$, where $P=\F_3[\xi_1,\xi_2,\dots]\subset A$
is the dual algebra of reduced powers, using a Cartan-Eilenberg spectral
sequence based on the extension
$$ B:=\F_3[\xi_1^3,\xi_2,\xi_3,\dots] \to P \to \F_3[\xi_1]/\xi_1^3. $$
Since $B$ is not a sub-coalgebra of $P$ (e.g. $\Delta(\xi_n)=\xi_{n-1}^3\tensor
\xi_1 + \dots$ is not contained in $P\tensor P$), the standard Cartan-Eilenberg
spectral sequence is not defined; however, $B$ is a $P$-comodule algebra, and so
the generalized Cartan-Eilenberg spectral sequence described in this paper can
be used. Furthermore, while the Adams flatness condition in Corollary
\ref{cor:E_2} does not hold for this extension, it does hold after
inverting the polynomial class $b_{10}\in
\Ext^2_{\F_3[\xi_1]/(\xi_1^3)}(\F_3,\F_3)$. The computation of the resulting
localized spectral sequence is the
subject of \cite{b10-paper} which, at various points, crucially uses each of
the three forms of the spectral sequence discussed in this paper.

In the rest of this section we give a complete calculation of a simpler
version of this problem, which recovers a classical result on the
rationalization of the sphere as a straightforward consequence of the techniques
in this paper. Recall that the dual Steenrod algebra at $p>2$
has the form $A = \F_p[\xi_1,\xi_2,\dots]\tensor E[\tau_0,\tau_1,\dots]$, and 
let $a_0  = [\tau_0] \in \Ext^1_A(\F_p,\F_p)$. This class survives the Adams
spectral sequence $\Ext_A(\F_p,\F_p)\implies \pi_*S^\hhat_p$ and is detected in
homotopy by the multiplication-by-$p$ map $(S\too{p} S)\in \pi_0S$. The following
result, which should be seen as the algebraic version of Serre's calculation
\cite{serre-classes} of the homotopy of the rationalized sphere,
is originally due to May and Milgram \cite{may-milgram}.
\begin{proposition}\label{adams-towers}
Let $M$ be an $A$-comodule. For $p>2$,
$$a_0^{-1} \Ext_A(\F_p,M) = a_0^{-1}\Ext_{E[\tau_0]}(\F_p,M).$$
In particular, the $a_0$-localized Adams $E_2$ page for the sphere is $a_0^{-1}
\Ext_A(\F_p,\F_p) = \F_p[a_0^{\pm 1}]$.
\end{proposition}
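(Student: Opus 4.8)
The plan is to realize the target as an instance of the generalized Cartan–Eilenberg spectral sequence of this paper and to show that, after inverting $a_0$, the $E_1$ page collapses onto a single column. Take $\Sigma = E[\tau_0]$ with the evident Hopf algebra surjection $A\onto\Sigma$ (killing every $\xi_i$ and every $\tau_j$ with $j\ge 1$), and set $\Phi = A\cotensor_\Sigma\F_p$. A direct check shows $\Phi$ is an $A$-comodule algebra but \emph{not} a sub-coalgebra of $A$: for instance $\tau_n\notin\Phi$ for $n\ge 1$, while the twisted element $\til\tau_n := \tau_n - \xi_n\tau_0$ does lie in $\Phi$. Thus the classical Cartan–Eilenberg spectral sequence is unavailable, but the constructions (1)--(3) apply, and by Theorem \ref{thm:equivalence} it suffices to work with any one model. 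Its $E_1$ page is $E_1^{s,t} = \Ext^t_\Sigma(\F_p,\bar\Phi^{\tensor s}\tensor M)$.

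Next I would reduce the localized $E_1$ page to Margolis homology. Writing the $\Sigma$-coaction on a comodule $N$ as $x\mapsto 1\tensor x + \tau_0\tensor Q_0(x)$ defines an operator $Q_0$ (the Bockstein) with $Q_0^2 = 0$, and a standard computation over $E[\tau_0]$ via its two-periodic minimal resolution identifies, naturally in $N$,
$$a_0^{-1}\Ext_\Sigma(\F_p,N)\isom\F_p[a_0^{\pm 1}]\tensor_{\F_p} H(N;Q_0),\qquad H(N;Q_0) = \ker Q_0/\im Q_0.$$
Since $\tau_0$ is primitive, $Q_0$ acts as a derivation, so Künneth over the field $\F_p$ gives $H(\bar\Phi^{\tensor s}\tensor M;Q_0)\isom H(\bar\Phi;Q_0)^{\tensor s}\tensor H(M;Q_0)$. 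Hence the entire localized $E_1$ page is controlled by the single group $H(\bar\Phi;Q_0)$.

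The crux is therefore to compute $H(\Phi;Q_0)$. Reading off the coaction coefficients from the Milnor coproduct gives $Q_0\xi_n = 0$ and $Q_0\til\tau_n = \xi_n$ up to sign, with $\Phi\isom\F_p[\xi_1,\xi_2,\dots]\tensor E[\til\tau_1,\til\tau_2,\dots]$ as an algebra on which $Q_0$ is a derivation. Thus $(\Phi,Q_0)$ is the tensor product over $n$ of the Koszul complexes $(\F_p[\xi_n]\tensor E[\til\tau_n],Q_0)$, each with homology $\F_p$ concentrated in the bottom degree, so Künneth yields $H(\Phi;Q_0) = \F_p$. As $Q_0$ respects the augmentation splitting $\Phi = \F_p\dsum\bar\Phi$ and kills the unit, this forces $H(\bar\Phi;Q_0) = 0$. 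Consequently $a_0^{-1}E_1^{s,t} = 0$ for $s\ge 1$, while the $s=0$ column is $a_0^{-1}\Ext_\Sigma(\F_p,M) = a_0^{-1}\Ext_{E[\tau_0]}(\F_p,M)$. The localized spectral sequence then collapses at $E_1$ with no room for differentials and converges to $a_0^{-1}\Ext_A(\F_p,M)$, giving the first claim; specializing to $M = \F_p$ and using $\Ext_{E[\tau_0]}(\F_p,\F_p) = \F_p[a_0]$ yields $a_0^{-1}\Ext_A(\F_p,\F_p) = \F_p[a_0^{\pm 1}]$.

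The main obstacle is the explicit identification of $\Phi$ and of the operator $Q_0$ on it: everything downstream is formal, but pinning down the twisted generators $\til\tau_n$ and verifying $Q_0\til\tau_n = \pm\xi_n$ requires an honest computation with the Milnor coproduct. Two further points need care. First, one must justify localizing termwise; this is clean here because $a_0$ is a non-nilpotent permanent cycle and the localized page is concentrated in a single filtration, so exactness of localization commutes with passage to $E_1$ and no convergence subtlety arises. Second, the same conclusion follows from Corollary \ref{cor:E_2}: after inverting $a_0$ the flatness hypothesis is automatic, since $a_0^{-1}\Ext_\Sigma(\F_p,\F_p) = \F_p[a_0^{\pm 1}]$ is a graded field, and the computation $a_0^{-1}\Ext_\Sigma(\F_p,\Phi) = \F_p[a_0^{\pm 1}]$ identifies the localized $E_2$ coalgebra with its own base ring, again forcing collapse.
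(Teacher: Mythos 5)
Your proposal is correct and sets up the same spectral sequence as the paper (the generalized Cartan--Eilenberg/MPASS for $A \to E[\tau_0]$ with $\Phi = A\cotensor_{E[\tau_0]}\F_p$, collapsing the localized $E_1$ onto the $s=0$ column), but the key vanishing is established by a different mechanism. The paper proves that $\bar{\Phi}$ is \emph{free} as an $E[\tau_0]$-comodule by exhibiting an explicit decomposition into two-dimensional free summands $\F_p\{\xi_n^i,\xi_n^{i-1}\tau_n\}$; this is an integral statement, giving $E_1^{s,t}=0$ whenever $s>0$ and $t>0$ \emph{before} localizing, which is what makes the ``for degree reasons'' convergence argument immediate. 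You instead invoke the identification $a_0^{-1}\Ext_{E[\tau_0]}(\F_p,N)\isom \F_p[a_0^{\pm 1}]\tensor H(N;Q_0)$ and compute $H(\bar\Phi;Q_0)=0$ via a tensor product of Koszul complexes. For bounded-below comodules over $E[\tau_0]$ these two facts are equivalent (freeness is detected by vanishing Margolis homology), so nothing is lost, though your route only yields the vanishing after inverting $a_0$ and therefore leans a bit harder on the claim that localization commutes with the passage to $E_1$ and that the resulting one-column spectral sequence converges; a sentence noting that the \emph{unlocalized} $E_1$ already vanishes for $s,t>0$ (which your Koszul computation in fact shows, since each $(\F_p[\xi_n]\tensor E[\til\tau_n],Q_0)$ is exact away from degree $0$, i.e.\ free) would make this airtight. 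One point in your favor: you correctly observe that $\tau_n\notin\Phi$ for $n\ge 1$ and work with the twisted generators $\til\tau_n=\tau_n-\xi_n\tau_0$ satisfying $Q_0\til\tau_n=\pm\xi_n$; the paper's displayed description $C=\F_p[\xi_1,\dots]\tensor E[\tau_1,\dots]$ elides this twist (it endows the untwisted generators with the coaction $\psi(\tau_n)=\tau_n\tensor 1+\xi_n\tensor\tau_0$, which amounts to the same comodule algebra), and your version is the more careful one. Your closing remark that Corollary \ref{cor:E_2} applies after localization because $\F_p[a_0^{\pm 1}]$ is a graded field is a valid alternative endgame, consistent with the paper's general framework.
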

\begin{proof}
Let $E[x]$ denote the exterior algebra over $\F_p$ on a class $x$. The sequence of
algebras
$$ C:= \F_p[\xi_1,\xi_2,\dots]\tensor E[\tau_1,\dots] \to A\to E[\tau_0]  $$
is not an extension of Hopf algebras, as the diagonal on $\tau_n\in C$ does not
lie in $C\tensor C$. However, $C = A\cotensor_{E[\tau_0]}\F_p$ is an $A$-comodule algebra, and so the
spectral sequences of Theorem \ref{thm:equivalence} are defined. In particular,
we have an $E_1$ page $$ E_1^{s,t} = \Ext_{E[\tau_0]}^t(\F_p, \bar{C}^{\tensor s}\tensor M). $$

\begin{lemma}\label{appendix-lemma-C}
$\bar{C}$ is free over $E[\tau_0]$.
\end{lemma}
\begin{proof}
The Milnor diagonal $\Delta:A\to A\tensor A$ induces a right
$E[\tau_0]$-coaction given by
\begin{align*}
\psi(\tau_n) & = \tau_n \tensor 1 + \xi_n \tensor \tau_0
\\\psi(\xi_n)  & = \xi_n\tensor 1
\end{align*}
and in particular, $\xi_n^i$ is primitive for all $i$ and
$\psi(\xi_n^i\tau_n)=\xi_n^i\tau_n\tensor 1 + \xi_n^{i+1}\tensor \tau_0$.
We have a $E[\tau_0]$-comodule decomposition of $C$:
\begin{align*}
C = \tensors_{n=1}^\iy \F_p[\xi_n,\tau_n]/\tau_n^2
& = \tensors_{n=1}^\iy \Big(\F_p[\xi_n,\tau_n]/\tau_n^2\Big)
\\& = \tensors_{n=1}^\iy \Big(\F_p\{ 1 \}\dsum \F_p\{\xi_n^i,\ \tau_n\xi_n^i \st
i\geq 0\}\Big)
\\ & = \tensors_{n=1}^\iy\Big(\F_p\{ 1 \}\dsum \dsums_{i=1}^\iy \F_p\{ \xi_n^i,
\xi_n^{i-1}\tau_n \}\Big)
\end{align*}
where all of the summands $\F_p\{ \xi_n^i,\xi_n^{i-1}\tau_n \}$ are free. Thus, $C =
\tensors_{n=1}^\iy (\F_p\{ 1 \}\dsum F_n) = \F_p\{ 1 \}\dsum F$ for free
comodules $F$ and $F_n$, and hence $\bar{C}$ is free.
\end{proof}
If $F$ is free over $E[\tau_0]$, then $\Ext_{E[\tau_0]}^*(\F_p,F)$ is concentrated
in homological degree zero. Thus $E_1^{s,t} =0$ unless $s=0$ or $t=0$. For
degree reasons, the $a_0$-localized Cartan-Eilenberg spectral sequence
$$ a_0^{-1}E_1^{s,t} = a_0^{-1} \Ext_{E[\tau_0]}^*(\F_p,\bar{C}^{\tensor
s}\tensor M) \implies a_0^{-1} \Ext_A(\F_p,M)$$
converges. Furthermore, $a_0^{-1} E_1^{s,t} = 0$ for $s>0$, and the
$a_0$-localized spectral sequence collapses at $E_1$ for degree reasons, giving
the isomorphism
\begin{align*}
a_0^{-1}E_1^{0,*}  & = a_0^{-1}\Ext^*_{E[\tau_0]}(\F_p,M) \isom a_0^{-1}\Ext_A(\F_p,M). \qedhere
\end{align*}
\end{proof}

\section{The Cartan-Eilenberg spectral sequence}\label{section:CESS}
\subsection{Notation and preliminaries}
Throughout the paper, $\Gamma$ will be a Hopf algebra over a commutative ring
$k$, and $M$ and $N$ will be $\Gamma$-comodules. Coproducts will be denoted by
$\Delta$, and comodule coactions will be denoted by $\psi$.
\begin{random}{Notation}
We write $\sum m'\tensor m''\colonequals\psi(m)$  and $\sum \gamma'\tensor
\gamma''\colonequals\Delta(\gamma)$ for $m\in M$ and $\gamma\in \Gamma$ when
there is no ambiguity which coaction is in play.
\end{random}

An essential technical point in this paper is the comparison between two
different $\Gamma$-comodule structures on a tensor product of
$\Gamma$-comodules; we use the following nonstandard notation to clarify which
structure is in play at a given time.

\begin{definition}\label{left-diagonal-coaction}
Let $M$ and $N$ be left $\Gamma$-comodules, with coaction denoted by $\psi(m)=\sum
m'\tensor m''$ and $\psi(n)=\sum n'\tensor n''$. There are two natural ways to put a
$\Gamma$-comodule structure on their tensor product $M\tensor N$: the
\emph{left coaction} $M\tensor N \to \Gamma\tensor (M\tensor N)$ is given by
$m\tensor n\mapsto \sum m'\tensor m''\tensor n$, and the \emph{diagonal
coaction} is given by $m\tensor n \mapsto \sum m'n'\tensor m''\tensor n''$. To
distinguish these, we write $M\tensorL N$ for the tensor product $M\tensor N$
endowed with the left $\Gamma$-coaction, and $M\tensorD N$ for the diagonal
coaction.

For a pair of right $\Gamma$-comodules one can analogously define the right and
diagonal coactions, denoted $\tensorR$ and $\tensorD$, respectively.
\end{definition}
These constructions are isomorphic in the following special case:
\begin{lemma}\label{shear}
If $M$ is a left $\Gamma$-comodule, there is an isomorphism $S:\Gamma\tensorD
M\to \Gamma\tensorL M$ (called the \emph{shear isomorphism}) given by:
\begin{align*}
S: a\tensor m & \mapsto \sum am' \tensor m''
\\S^{-1}: a\tensor m & \mapsto \sum ac(m')\tensor m''
\end{align*}
where $c$ is the antipode on $\Gamma$. Analogously, if $M$ is a right
$\Gamma$-comodule, there is an isomorphism $S_c:M\tensorD \Gamma\to M\tensorR
\Gamma$ given by:
\begin{align*}
S_c: m\tensor a & \mapsto \sum m'\tensor m''a
\\S_c^{-1}: m\tensor a & \mapsto \sum m'\tensor c(m'')a.
\end{align*}
\end{lemma}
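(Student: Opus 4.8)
The plan is to prove that $S$ is an isomorphism of left $\Gamma$-comodules from $\Gamma \tensorD M$ to $\Gamma \tensorL M$ by checking two things separately: that the stated formulas for $S$ and $S^{-1}$ are mutually inverse $k$-linear maps, and that $S$ intertwines the diagonal and left coactions. Throughout I would work in Sweedler notation, writing $\psi(m) = \sum m' \tensor m''$ and $\Delta(a) = \sum a' \tensor a''$, and use the iterated conventions $\sum m' \tensor m'' \tensor m'''$ for $(\mathrm{id}\tensor \psi)\psi(m) = (\Delta \tensor \mathrm{id})\psi(m)$ (these agree by coassociativity, with the first two legs lying in $\Gamma$ and the last in $M$). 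Unwinding Definition \ref{left-diagonal-coaction}, the diagonal coaction on $\Gamma \tensorD M$ sends $a \tensor m \mapsto \sum a'm' \tensor a'' \tensor m''$, while the left coaction on $\Gamma \tensorL M$ sends $a \tensor m \mapsto \sum a' \tensor a'' \tensor m$.

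For the comodule-map claim I would chase the square expressing compatibility of $S$ with the two coactions. Going down-then-across computes $(\mathrm{id}\tensor S)$ applied to $\sum a'm' \tensor a'' \tensor m''$, and going across-then-down applies the left coaction to $\sum am' \tensor m''$, which requires expanding $\Delta(am') = \Delta(a)\Delta(m')$ using that $\Delta$ is an algebra homomorphism. Both composites should reduce, after one application of coassociativity, to the common value $\sum a'm' \tensor a''m'' \tensor m'''$. This is the step carrying the real content, since it is exactly where multiplicativity of $\Delta$ and the comodule coassociativity axiom interact; I would present the two reductions side by side to confirm they agree.

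For bijectivity I would verify $S \circ S^{-1} = \mathrm{id}$ and $S^{-1}\circ S = \mathrm{id}$ directly. After one use of coassociativity each composite collapses to an expression of the form $\sum a\,c(m')m'' \tensor m'''$ or $\sum a\,m'c(m'') \tensor m'''$, in which $m' \tensor m''$ are the two halves of a single coproduct. The key identities are then $\sum c(m')m'' \tensor m''' = 1 \tensor m$ and $\sum m'c(m'') \tensor m''' = 1 \tensor m$, each obtained from one of the antipode axioms $\sum c(\gamma')\gamma'' = \epsilon(\gamma)1 = \sum \gamma' c(\gamma'')$ followed by the counit axiom $\sum \epsilon(m')m'' = m$. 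The right-comodule statement for $S_c$ is entirely symmetric: one repeats the same computations with the roles of the two tensor factors and of left and right interchanged, so I would simply remark that it follows mutatis mutandis.

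The main obstacle is organizational rather than conceptual: keeping the iterated Sweedler indices straight and correctly identifying which legs live in $\Gamma$ versus in $M$ when applying the antipode and counit axioms — in particular, ensuring that the antipode relation is applied to the $\Gamma$-valued legs of the iterated coaction. No input beyond the Hopf algebra and comodule axioms is needed.
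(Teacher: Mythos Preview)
Your proposal is correct and is precisely the standard verification one carries out; the paper itself omits the argument entirely, remarking only that ``the proof is straightforward.'' There is nothing to compare against, and your organization (checking that $S$ is a comodule map via multiplicativity of $\Delta$ and coassociativity, then checking bijectivity via the antipode and counit axioms) is the expected one.
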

%\begin{proof}
%We prove just that the pair $(S, S^{-1})$ are actually inverses; the
%statement for $S_c$ is analogous. First we prove that $S\circ S^{-1} = \Id$. We
%have
%\begin{align*}
%S(S^{-1}(a\tensor m)) & = S(\textstyle\sum ac(m')\tensor m'')
%\\ & = \sum ac(m')(m'')'\tensor (m'')''
%\\ & = \sum ac(m_{(1)})m_{(2)}\tensor m_{(3)}
%\\ & = \sum a\epsilon(m_{(1)})\tensor m_{(2)}
%\\ & = \sum a\tensor m
%\end{align*}
%where the fourth equality is by Fact \ref{hopf-facts}(2) and the last equality
%is by Fact \ref{hopf-facts}(3).

%In the other direction, analogous application of Hopf algebra properties yields:
%\begin{align*}
%S^{-1}(S(a\tensor m)) & = S^{-1}(\textstyle \sum am'\tensor m'')
%\\ & = \sum am'c((m'')')\tensor (m'')''
%\\ & = \sum am_{(1)}c(m_{(2)})\tensor m_{(3)}
%\\ & = \sum a\epsilon(m_{(1)})\tensor m_{(2)}
%\\ & = \sum a\tensor m.\qedhere
%\end{align*}
%\end{proof}

The proof is straightforward.
Now suppose $\Phi = \Gamma\cotensor_\Sigma k$ for a Hopf algebra $\Sigma$ such
that $\Gamma$ is injective as a $\Sigma$-comodule.
\begin{lemma}\label{shear-cotensor}
Let $M$ be a $\Gamma$-comodule. Then $\Gamma\cotensor_\Sigma M\subset
\Gamma\tensorL M$ inherits a left $\Gamma$-comodule structure, and
the shear isomorphism $S:\Gamma\tensorD M\to \Gamma\tensorL M$ restricts to an
isomorphism $$\Phi\tensorD M\too{\isom} \Gamma\cotensor_\Sigma
M.$$
The shear isomorphism $S_c:M\tensorD \Gamma\to M\tensorR \Gamma$ restricts to an
isomorphism $M\tensorD \Phi\too{\isom} M\cotensor_\Sigma\Gamma$.
\end{lemma}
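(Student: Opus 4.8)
The plan is to realize both $\Phi\tensorD M$ and $\Gamma\cotensor_\Sigma M$ as equalizers of parallel pairs of maps out of $\Gamma\tensor M$, and to exhibit the shear $S$ as the middle vertical arrow of a morphism between these two equalizer diagrams. Write $\rho=(\id\tensor\pi)\Delta\colon\Gamma\to\Gamma\tensor\Sigma$ for the right $\Sigma$-coaction on $\Gamma$ (an algebra map, since $\Delta$ and $\pi$ are) and $\ell=(\pi\tensor\id)\psi\colon M\to\Sigma\tensor M$ for the induced left $\Sigma$-coaction on $M$. Then $\Gamma\cotensor_\Sigma M$ is by definition the equalizer of $f=\rho\tensor\id_M$ and $g=\id_\Gamma\tensor\ell$, while $\Phi=\Gamma\cotensor_\Sigma k$ is the equalizer of $\rho$ and $a\mapsto a\tensor 1$; assuming $k$ is a field (or merely that $\Gamma$ and $M$ are flat, as holds in all cases of interest) so that $-\tensor M$ preserves kernels, this identifies $\Phi\tensor M$ with the equalizer of $\rho\tensor\id_M$ and $\iota\tensor\id_M$, where $\iota(a)=a\tensor 1$. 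Note that the two pairs share their first leg $\rho\tensor\id_M$.

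First I would write down the right-hand vertical arrow. Define $\tilde S\colon\Gamma\tensor\Sigma\tensor M\to\Gamma\tensor\Sigma\tensor M$ by $a\tensor\sigma\tensor m\mapsto\sum a\,m_{-2}\tensor\sigma\,\pi(m_{-1})\tensor m_0$, using the iterated coaction $(\Delta\tensor\id_M)\psi(m)=\sum m_{-2}\tensor m_{-1}\tensor m_0$. The heart of the argument is the pair of identities $\tilde S\circ(\rho\tensor\id_M)=(\rho\tensor\id_M)\circ S$ and $\tilde S\circ(\iota\tensor\id_M)=(\id_\Gamma\tensor\ell)\circ S$, each of which is a short Sweedler computation: the first rewrites $\rho(a\,m_{-1})=\rho(a)\rho(m_{-1})$ using that $\rho$ is an algebra map and then applies comodule coassociativity, while the second is immediate from coassociativity. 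Together they assert precisely that $(S,\tilde S)$ is a morphism from the parallel pair $(\rho\tensor\id_M,\iota\tensor\id_M)$ to the parallel pair $(f,g)$.

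Next I would check that $\tilde S$ is an isomorphism. It factors as $\tilde S=B\circ A$, where $A(a\tensor\sigma\tensor m)=\sum a\,m_{-1}\tensor\sigma\tensor m_0$ shears the $\Gamma$- and $M$-slots and $B(a\tensor\sigma\tensor m)=\sum a\tensor\sigma\,\pi(m_{-1})\tensor m_0$ shears the $\Sigma$- and $M$-slots; each is an instance of the shear isomorphism of Lemma \ref{shear} (applied to the $\Gamma$-comodule $M$ and to its induced $\Sigma$-comodule structure, respectively), hence invertible, with inverse obtained by inserting the relevant antipode. Since $S$ is also an isomorphism by Lemma \ref{shear}, the induced map on equalizers $\Phi\tensorD M\to\Gamma\cotensor_\Sigma M$ is an isomorphism of $k$-modules. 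A one-line Sweedler check from coassociativity shows $S$ is moreover left $\Gamma$-colinear, which upgrades this to an isomorphism of left $\Gamma$-comodules once the target carries the structure described below.

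Finally, for the inherited comodule structure, the left coaction $\Delta\tensor\id_M$ on $\Gamma\tensorL M$ carries $\Gamma\cotensor_\Sigma M$ into $\Gamma\tensor(\Gamma\cotensor_\Sigma M)$: the bicomodule compatibility $(\id_\Gamma\tensor\rho)\Delta=(\Delta\tensor\id_\Sigma)\rho$ yields $(\id_\Gamma\tensor f)(\Delta\tensor\id_M)=(\Delta\tensor\id)f$ and likewise for $g$, so $\Delta\tensor\id_M$ sends $\ker(f-g)$ into $\ker(\id_\Gamma\tensor(f-g))=\Gamma\tensor\ker(f-g)$, the last equality again using flatness of $\Gamma$. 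The statement for $M\tensorD\Phi\to M\cotensor_\Sigma\Gamma$ is the mirror image, obtained by running the identical argument with $S_c$ in place of $S$ and the right- and diagonal coactions in place of the left ones. I expect the main obstacle to be pinning down the correct formula for $\tilde S$ and verifying the two intertwining identities; everything else is formal once that square commutes, apart from the routine flatness bookkeeping needed to commute $-\tensor M$ and $\Gamma\tensor-$ past the defining equalizers.
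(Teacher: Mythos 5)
Your proof is correct. The paper itself offers no proof of this lemma---it is treated as routine, in the same breath as Lemma \ref{shear} (``The proof is straightforward''), and the appendix simply cites it when iterating the shear map in \eqref{Phi-shear}---so there is no argument of the author's to compare against; the implicit intended argument is presumably the direct two-inclusion Sweedler check that $S(\Phi\tensorD M)\subset \Gamma\cotensor_\Sigma M$ and $S^{-1}(\Gamma\cotensor_\Sigma M)\subset \Phi\tensorD M$, in the style of the appendix's proof of Lemma \ref{shear-restriction-G}. Your packaging of that computation as a morphism of equalizer diagrams $(S,\til{S})$ is a clean way to organize it: the two intertwining identities you isolate are exactly the content of the two inclusions, and realizing $\til{S}$ as a composite of two shears disposes of invertibility without a separate calculation. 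I verified both identities and the factorization $\til{S}=B\circ A$; they check out, as does the bicomodule-compatibility argument for the inherited coaction on $\Gamma\cotensor_\Sigma M$. Two minor remarks. First, you correctly flag that identifying $\Phi\tensor M$ with the equalizer of $\rho\tensor\Id$ and $\iota\tensor\Id$, and $\ker(\Id_\Gamma\tensor(f-g))$ with $\Gamma\tensor\ker(f-g)$, requires exactness of tensoring, which is automatic over a field and is a standing (if tacit) flatness assumption whenever cotensor products are used over a general commutative ring; the direct-inclusion argument avoids even this. Second, for the comodule-map assertion one should note that $\Phi$ is a left coideal of $\Gamma$ (which follows from $(\Id\tensor\rho)\Delta=(\Delta\tensor\Id)\rho$, the same identity you invoke for the target), so that the diagonal coaction on $\Phi\tensorD M$ is defined; with that in place your colinearity check completes the statement.
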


Using this, we produce a useful variant of the usual change of rings theorem
$$\Ext_\Gamma(M,\Gamma\cotensor_\Sigma
N)\isom \Ext_\Sigma^*(M,N)$$ (see, e.g., \cite[\S VI.4]{cartan-eilenberg}).

\begin{corollary}[Change of rings theorem] \label{change-of-rings}
Let $M$ be a right $\Gamma$-comodule and $N$ a left $\Gamma$-comodule, and let $\Phi = \Gamma\cotensor_\Sigma N$. Then there is an isomorphism
$$ \Ext_\Gamma^*(M, \Phi\tensorD N) \isom \Ext_\Sigma^*(M,N). $$
\end{corollary}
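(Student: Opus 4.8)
The plan is to deduce the corollary directly from Lemma \ref{shear-cotensor} together with the classical change of rings theorem quoted just above it. The essential point is that the coefficient comodule $\Phi\tensorD N$ appearing on the left is exactly the kind of diagonal tensor product handled by Lemma \ref{shear-cotensor}: applying that lemma with its comodule taken to be $N$, the shear isomorphism $S$ restricts to an isomorphism of \emph{left} $\Gamma$-comodules
$$ \Phi\tensorD N \too{\isom} \Gamma\cotensor_\Sigma N. $$

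First I would observe that $\Ext_\Gamma^*(M,-)$ is a functor of the left-comodule variable (it is the derived functor of $M\cotensor_\Gamma(-)$), so this comodule isomorphism induces an isomorphism $\Ext_\Gamma^*(M,\Phi\tensorD N)\isom \Ext_\Gamma^*(M,\Gamma\cotensor_\Sigma N)$. Then I would feed the right-hand side into the usual change of rings theorem $\Ext_\Gamma^*(M,\Gamma\cotensor_\Sigma N)\isom \Ext_\Sigma^*(M,N)$, which is available because $\Gamma$ is assumed injective over $\Sigma$. Composing the two isomorphisms yields the desired identification.

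The substance of the argument has already been absorbed into Lemma \ref{shear-cotensor}, so I do not expect a genuine obstacle; the only point requiring care is the bookkeeping of comodule structures. One must use the diagonal coaction on $\Phi\tensorD N$ (Definition \ref{left-diagonal-coaction}) and the restricted left coaction on $\Gamma\cotensor_\Sigma N$, exactly as in Lemma \ref{shear-cotensor}, and confirm that the shear map is a morphism of left $\Gamma$-comodules rather than merely $k$-linear. Since this last fact is part of the statement of Lemma \ref{shear-cotensor}, for the corollary it suffices to cite it.
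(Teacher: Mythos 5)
Your proposal is correct and is exactly the argument the paper intends: the corollary is stated immediately after Lemma \ref{shear-cotensor} precisely so that the comodule isomorphism $\Phi\tensorD N\isom\Gamma\cotensor_\Sigma N$ can be fed into the classical change of rings theorem. (Note the statement's ``$\Phi=\Gamma\cotensor_\Sigma N$'' is a typo for $\Phi=\Gamma\cotensor_\Sigma k$, which is the hypothesis you correctly used.)
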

Both change of rings statements hold for Cotor in addition to comodule $\Ext$.

\subsection{Background: Classical Cartan-Eilenberg spectral
sequence}\label{section:old-CESS} We begin by reviewing the classical
construction of the Cartan-Eilenberg spectral sequence, expressed in the
language of coalgebras. Following the treatment in \cite[A1.3.14]{green}, we
will describe a spectral sequence that converges to $\Cotor_\Gamma(M,N)$. Many
other treatments define the Cartan-Eilenberg spectral sequence in the coalgebra
context using comodule Ext, but this does not matter for our cases of interest
due to the isomorphism
$$ \Hom_\Gamma(k,M) \isom k\cotensor_\Gamma M $$
which implies $\Ext_\Gamma(k,M) \isom \Cotor_\Gamma(k,M)$.
We choose Cotor because it is easier to work with: both slots are covariant, and
it can be computed using an injective resolution of either side (just as
projective resolutions are more convenient for modules, injective resolutions
are more convenient for comodules).

Given an extension of Hopf algebras $$\Phi\to
\Gamma\to \Sigma$$ over a commutative ring $k$ (so in particular $\Phi = \Gamma\cotensor_\Sigma k$), a right
$\Gamma$-comodule $M$, and a left $\Phi$-comodule $N$, the Cartan-Eilenberg
spectral sequence for computing $\Cotor_\Gamma(M,N)$ arises from the double
complex $(\Gamma\text{-resolution of }M)\cotensor_\Gamma (\Phi\text{-resolution of
}N)$. Our choice of injective resolution is the cobar resolution, which we will
describe in some detail because an essential technical point in our
spectral sequence construction comes down to the difference between two versions
of the cobar resolution ($\DD*(N)$ and $\DL*(N)$) which are isomorphic via the
shear isomorphism.

\begin{definition}\label{def-DL}
Define the cobar resolution $\DL*(N)$ of $N$ to be the chain complex associated
to the augmented cosimplicial object
\begin{align}
\label{cobar-cosimplicial-L}\xymatrix{
N\ar[d]^-\hteq
\\\hspace{-25pt}\DL\bullet(N)=\big(\Gamma\tensorL N\ar@<-1.2ex>[r]_-{\psi}\ar@<1.2ex>[r]^-\Delta & \Gamma\tensorL\Gamma\tensorL
N\ar[l]|-\epsilon\ar@<2.4ex>[r]^-{\Delta_1}\ar@<-2.4ex>[r]_-{\psi}\ar[r] &
\Gamma\tensorL \Gamma\tensorL\Gamma\tensorL
N\ar@<-1.2ex>[l]|-{\epsilon_2}\ar@<1.2ex>[l]|-{\epsilon_1} & \dots\hspace{25pt}\big).
}
\end{align}
Here the codegeneracies $\epsilon_i$ come from applying the coaugmentation
$\epsilon$ to the $i^{th}$ spot, and the coface maps $\Delta_i:\Gamma^{\tensor
n}\tensor N\to \Gamma^{\tensor n+1}\tensor N$ for $1\leq i\leq n$ come from
applying $\Delta$ to the $i^{th}$ slot; the last coface map comes from
the coaction $\psi:N\to \Gamma\tensor N$.
For a right $\Gamma$-comodule $M$, let
$\DR*(M)$ denote the analogous resolution $M\tensor \Gamma^{\tensor *}\tensorR
\Gamma$, and similarly for $\CLR*(M)$.

Let the \emph{(non-normalized) cobar complex $\CL*(M,N)$} be the complex
$M\cotensor_\Gamma \DL*(N)$.

There are also normalized versions $\mathcal{N}\DL*(N)$ (with terms $\Gamma\tensorL
\bar{\Gamma}^{\tensor *}\tensor N$) and
$\mathcal{N}\CL*(M,N) = M\tensor \bar{\Gamma}^{\tensor *}\tensor N$ (with terms
$M\cotensor_\Gamma(\Gamma\tensorL \bar{\Gamma}^{\tensor *}\tensor N)$), obtained by
applying the normalization functor $\mathcal{N}: \Ch\to \Ch$ defined on terms by
$$\mathcal{N}A^n = \intss_{i=0}^{n-1}\ker(d_i:A^{n+1}\to A^n)\subset A^n.$$
(Here $\bar{\Gamma}$ denotes $\ker(\epsilon:\Gamma\to k)$ but we will later also use
that symbol to denote the quotient $\coker(\eta_L:k\to \Gamma)$.)
\end{definition}

For our purposes, the classical Cartan-Eilenberg spectral sequence is the
spectral sequence associated to the double complex
$\mathcal{N}\DR*(M)\cotensor_\Gamma \mathcal{N}\DLPhi*(N)$.
\begin{equation}\label{classical-cess} \scalebox{0.80}{\xymatrix@C=50pt{
 & \vdots\ar[d] & \vdots\ar[d]
\\ \dots\ar[r]& (M\tensor \bar{\Gamma}^{\tensor
t}\tensor \Gamma)\cotensor_\Gamma(\Phi\tensor \bar{\Phi}^{\tensor s}\tensor
N)\ar[r]^-{(-1)^t\Id\tensor d_\Phi}\ar[d]_-{d_\Gamma\tensor \Id} & (M\tensor \bar{\Gamma}^{\tensor t}\tensor
\Gamma)\cotensor_\Gamma(\Phi\tensor \bar{\Phi}^{\tensor
s+1}\tensor N)\ar[d]^-{d_\Gamma\tensor \Id}\ar[r] & \dots
\\ \dots\ar[r] & (M\tensor \bar{\Gamma}^{\tensor
t+1}\tensor \Gamma)\cotensor_\Gamma(\Phi\tensor \bar{\Phi}^{\tensor s}\tensor
N)\ar[r]^-{(-1)^{t+1}\Id\tensor d_\Phi}\ar[d]
& (M\tensor \bar{\Gamma}^{\tensor t+1}\tensor \Gamma)\cotensor_\Gamma(\Phi\tensor
\bar{\Phi}^{\tensor s+1}\tensor N)\ar[d]\ar[r] & \dots
\\ &  \vdots & \vdots
}}\end{equation}
%where $d_\Gamma$ is the differential in the resolution $\Gamma^{\tensor
%*+1}\tensor N$ given by
%\begin{align*}
%d_\Gamma: \Gamma^{\tensor t+1}\tensor N & \to \Gamma^{\tensor s+2}\tensor N
%\\d_\Gamma: [b_0|\dots|b_t]\nu & \mapsto \sum_{i=0}^t
%(-1)^i [b_0|\dots|b_{i-1}|1|b_i|\dots|b_t]\nu+(-1)^{t+1}[b_0|\dots|b_t|1]\nu
%\end{align*}
%and
%$d_\Phi$ is the differential in the resolution $M\tensor \Phi^{\tensor *+1}$ of
%$M$ given by
%\begin{align*}
%d_\Phi: M\tensor \Phi^{\tensor s+1} & \to M\tensor \Phi^{\tensor s+2}
%\\d_\Phi: m[a_s|\dots|a_0] & \mapsto (-1)^{s+1}m[1|a_s|\dots | a_0] + \sum_{i=0}^s
%(-1)^i m[a_s|\dots|a_i|1|a_{i-1}|\dots |a_0].
%\end{align*}
%The signs come from the usual formula for the tensor product of chain complexes,
%and satisfy $d_{\text{vert}}d_{\text{horiz}} +
%d_{\text{horiz}}d_{\text{vert}}=0$.
Taking homology in the vertical direction first, one has
\begin{align*}
E_1^{s,t}  & = \Cotor^t_\Gamma(M,\,\Phi\tensor \bar{\Phi}^{\tensor s}\tensor N)
\\ & \isom \Cotor^t_\Gamma(M,\,(\Gamma\cotensor_\Sigma k)\tensor
\bar{\Phi}^{\tensor s}\tensor N)
\\ & \isom \Cotor^t_\Sigma(M,\, \bar{\Phi}^{\tensor s}\tensor N)
\end{align*}
where the last isomorphism is by the change of rings theorem. For the spectral
sequence that starts by taking homology in the horizontal direction first,
exactness of the functor $(M\tensor \bar{\Gamma}^{\tensor t}\tensor
\Gamma)\cotensor_\Gamma -$
gives $$E_1^{*,t} \isom H^*((M\tensor \bar{\Gamma}^{\tensor t}\tensor
\Gamma)\cotensor_\Gamma
(\Phi\tensor \bar{\Phi}^{\tensor *}\tensor N)) \isom (M\tensor
\bar{\Gamma}^{\tensor t}\tensor \Gamma)\cotensor_\Gamma H^*(\Phi\tensor
\bar{\Phi}^{\tensor *}\tensor N)$$
and by the exactness of the resolution $\Phi\tensor \bar{\Phi}^{\tensor
*}\tensor N$ of $N$,
this is concentrated in degree zero as $(M\tensor \bar{\Gamma}^{\tensor
t}\tensor \Gamma)\cotensor_\Gamma N$. The $E_2$ page then takes cohomology in
the $t$ direction, obtaining $E_2 \isom E_\iy \isom \Cotor_\Gamma(M,N)$.
%This implies that the spectral sequence that starts by taking homology in the
%vertical direction first also converges to $\Cotor_\Gamma(M,N)$.
The Cartan-Eilenberg spectral sequence is the vertical-first spectral sequence,
and we have just shown that it has
$$ E_1^{s,t} = \Cotor^t_\Sigma(M,\bar{\Phi}^{\tensor s}\tensor N) \implies
\Cotor^{s+t}_\Gamma(M,N). $$
If $\Phi$ has trivial $\Sigma$-coaction, then we have $E_1^{s,t} \isom
\Cotor^t_\Sigma(M,N)\tensor \bar{\Phi}^{\tensor s}$, whose cohomology is:
$$ E_2 = \Cotor^s_\Phi(k,\Cotor^t_\Sigma(M,N)). $$
The spectral sequence converges because it is a first-quadrant double complex
spectral sequence.
\begin{remark}
The $E_2$ page is independent of the $\Phi$-resolution of $N$ and the
$\Gamma$-resolution of $M$, but the $E_1$ page does depend on the
$\Phi$-resolution of $N$.
\end{remark}

\subsection{Weakening the hypotheses}\label{subsection:new-CESS} We define a
related construction that makes sense when $\Phi$ is not a coalgebra. More
precisely, let $\Gamma$ be a Hopf algebra and let $\Phi$ be a
$\Gamma$-comodule-algebra. The main issue with defining an analogue of
\eqref{classical-cess} is that the cosimplicial object $\DLPhi\bullet(N)$ is not
defined, because the coface maps would be defined in terms of the coproduct on
$\Phi$.

To remedy this, we turn to a different construction of the cobar complex which
is defined for unital algebras, and which is isomorphic to Definition
\ref{def-DL} when we are working with a Hopf algebra. For a $\Gamma$-comodule
$N$, define the resolution $\DD*(N)$ of $N$ to be the chain complex associated
to the augmented cosimplicial object
\begin{align}
\label{cobar-cosimplicial-Delta} \xymatrix{
N\ar[d]^-\hteq
\\\hspace{-25pt}\DD\bullet(N)=\big(\Gamma\tensorD
N\ar@<-1.2ex>[r]_-{\eta_2}\ar@<1.2ex>[r]^-{\eta_1} & \Gamma\tensorD\Gamma\tensorD
N\ar[l]|-\mu\ar@<2.4ex>[r]^-{\eta_1}\ar@<-2.4ex>[r]_-{\eta_3}\ar[r] & \Gamma\tensorD
\Gamma\tensorD\Gamma\tensorD N\ar@<-1.2ex>[l]|-{\mu_1}\ar@<1.2ex>[l]|-{\mu_2}
 & \dots\hspace{25pt}\big)
}\end{align}
where the codegeneracies $\mu_i$ are multiplication of the
$i^{th}$ and $(i+1)^{st}$ copies of $\Gamma$, and the coface maps $\eta_i$ are
given by insertion of 1 into the $i^{th}$ spot.
%(This is the cosimplicial object
%associated to the monad arising from the free-forgetful adjunction
%%nlab "commutative Hopf algebroid"
%$U:\Comod_\Gamma  \longleftrightarrows  \Mod_k:F$
%with free functor given by $F:N \mapsto \Gamma\tensorL N$ and unit given by the
%$\Gamma$-coaction on $N$.)
Substituting $\Phi$ for $\Gamma$ above, if $N$ is a $\Gamma$-comodule we may
still define $\DDPhi*(N)$, a complex of $\Gamma$-comodules free over $\Phi$ and
quasi-isomorphic to $N$.

We can also describe $\DDPhi\bullet(N)$ in a more natural way. Since $\Phi$ is a
monoid object in $\Comod_\Gamma$, we can define the category $\Mod_\Phi$ of
$\Phi$-modules in $\Comod_\Gamma$. There is a free-forgetful adjunction $$
F_\Phi: \Comod_\Gamma \longleftrightarrows \Mod_\Phi : U $$ where $F_\Phi(N)=
\Phi\tensorD N$. Then $\DDPhi\bullet(N)$ is the cosimplicial object associated
to the monad $UF_\Phi$.

\begin{definition}\label{cobar-complex}
The \emph{(non-normalized) cobar complex} $\CD*(M,N)$ is the complex
$M\cotensor_\Gamma \DD*(N)$. Similarly, define
$\CL*(M,N)=M\cotensor_\Gamma \DL*(N)$.
\end{definition}

\begin{proposition}\label{shear-iso-on-cosimplicial-objects}
The shear isomorphism (Lemma \ref{shear}) gives rise to an isomorphism of
cosimplicial objects $\DD\bullet(N)\to \DL\bullet(N)$, and hence isomorphisms of
chain complexes $\DD*(N)\to \DL*(N)$ and $\CD*(N)\to \CL*(N)$.
\end{proposition}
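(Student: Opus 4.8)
The plan is to verify that the shear isomorphism $S:\Gamma\tensorD N\to \Gamma\tensorL N$ of Lemma \ref{shear} is compatible with the cosimplicial structure maps, in the sense that it commutes with all coface and codegeneracy maps in the two diagrams \eqref{cobar-cosimplicial-Delta} and \eqref{cobar-cosimplicial-L}. More precisely, I would first construct the isomorphism at each cosimplicial level $n$ by applying the shear isomorphism to the outermost copy of $\Gamma$: that is, I would define $S_n:\Gamma\tensorD\Gamma^{\tensor n-1}\tensorD N\to \Gamma\tensorL\Gamma^{\tensor n-1}\tensorL N$ by applying $S$ to the leftmost $\Gamma$ together with the entire remaining tensor factor $\Gamma^{\tensor n-1}\tensor N$ regarded as a single $\Gamma$-comodule under its diagonal coaction (note that $\Gamma^{\tensor n-1}\tensorD N$ is itself a $\Gamma$-comodule, so Lemma \ref{shear} applies verbatim). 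This gives a level-wise isomorphism, and the whole content of the proposition is that these assemble into a map of cosimplicial objects.

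The key computational step is then to check that $S_\bullet$ intertwines the two families of face and degeneracy operators: the face maps $\eta_i$ (insertion of $1$) and degeneracies $\mu_i$ (multiplication) on the $\tensorD$ side must correspond, under $S$, to the face maps $\Delta_i, \psi$ (comultiplication and coaction) and degeneracies $\epsilon_i$ on the $\tensorL$ side. The cleanest way to organize this is to use the monadic description given just before Definition \ref{cobar-complex}: $\DD\bullet(N)$ is the cosimplicial object of the monad $UF_\Phi$ with $F_\Phi(N)=\Phi\tensorD N$, and correspondingly $\DL\bullet(N)$ arises from the monad sending $N\mapsto \Gamma\tensorL N$. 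Lemma \ref{shear-cotensor} already tells us the shear isomorphism carries $\Phi\tensorD M$ to $\Gamma\cotensor_\Sigma M$, so the two free functors are identified; what remains is to see that $S$ respects the monad structure maps (unit and multiplication), from which compatibility with all the $\eta_i$ and $\mu_i$ follows formally because every coface and codegeneracy is built from the monad structure.

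I expect the main obstacle to be the bookkeeping in verifying compatibility with the last coface map and with multiplication: on the $\tensorL$ side the final face map is the coaction $\psi:N\to\Gamma\tensor N$, whereas on the $\tensorD$ side it is insertion of $1$, and reconciling these requires unwinding the explicit formula $S:a\tensor m\mapsto \sum am'\tensor m''$ against the diagonal versus left coaction, using coassociativity of $\psi$ and the counit axiom. Rather than check each of the $n+2$ structure maps at level $n$ by hand, I would reduce to the universal cases $n=0,1$ (unit and multiplication of the monad) and invoke the simplicial identities, so that the general case follows by naturality. The final sentence — that this descends to isomorphisms of the associated chain complexes $\DD*(N)\to\DL*(N)$ and of $\CD*(N)\to\CL*(N)$ — is then immediate, since passing to the alternating-sum differential and applying $M\cotensor_\Gamma(-)$ are functorial operations that preserve any isomorphism of cosimplicial objects.
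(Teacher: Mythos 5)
Your overall strategy --- realize $\DD\bullet(N)$ and $\DL\bullet(N)$ as the cosimplicial objects attached to the monads $\Gamma\tensorD(-)$ and $\Gamma\tensorL(-)$ on $\Comod_\Gamma$, check that $S$ is a morphism of monads, and deduce the cosimplicial compatibility formally --- is sound, and the two verifications you defer do go through: $S(1\tensor n)=\sum n'\tensor n''=\psi(n)$ matches the units, and a short computation using that $\Delta$ is an algebra map together with the counit axiom shows that $S$ intertwines the multiplications $a|b|n\mapsto ab|n$ and $a|b|n\mapsto a\epsilon(b)|n$. This is a structured repackaging of what the paper does, namely writing out the iterated shear isomorphism explicitly (Lemma \ref{iterated-shear}) via the inductive definition $S^n=S\circ(\Id\tensor S^{n-1})$.

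The gap is in your explicit description of the level-$n$ map. A single application of $S$ to the outermost $\Gamma$, with $\Gamma^{\tensorD n-1}\tensorD N$ treated as one diagonal comodule, sends $x_1|x_2|\dots|x_n|m$ to $\sum x_1{x_2}'\cdots{x_n}'m'\,|\,{x_2}''|\dots|{x_n}''|m''$. This is indeed an isomorphism onto the correct target comodule (the left coaction only sees the first tensor factor), but it is \emph{not} a map of cosimplicial objects. Already at level one: composing it with the coface that inserts $1$ into the first slot sends $x|m$ to $\sum x'm'|x''|m''$, whereas $\Delta_1\circ S$ sends $x|m$ to $\sum x'm_{(1)}|x''m_{(2)}|m_{(3)}$; taking $x$ and $m$ primitive with $N=\Gamma$, these two expressions have different numbers of terms, so they disagree in general. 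The map your monad morphism actually induces at level $n$ is the $n$-fold horizontal composite of $S$ with itself, i.e.\ the iterated shear $S^n=S\circ(\Id_\Gamma\tensor S)\circ\cdots$, whose closed form $\sum {x_1}_{(1)}\cdots{x_n}_{(1)}m_{(1)}|{x_2}_{(2)}\cdots{x_n}_{(2)}m_{(2)}|\dots|m_{(n+1)}$ is exactly what the paper records in the appendix. So you should either drop the ``one shear on the outermost factor'' description and let the monad morphism generate the levelwise maps, or state the iterated shear from the outset; as written, the levelwise map in your first paragraph and the formal argument in your second paragraph describe different maps, and only the latter is correct.
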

In the appendix, we write out explicit formulas for these isomorphisms.

%\begin{definition}\label{cobar-resolution}
%The \emph{(non-normalized) cobar resolution} $\DD*(N)$ is the associated chain
%complex of $\DD\bullet(N)$ (that is, the complex $\Gamma^{\tensorD *+1}\tensorD
%N$ whose differentials are an alternating sum of coface maps in
%$\DD\bullet(N)$). Similarly, define $\DL*(N)$ to be the associated chain complex of
%$\DL\bullet(N)$.
%\end{definition}

\begin{definition}\label{new-CESS}
If $\Gamma$ is a Hopf algebra, $\Phi$ is a $\Gamma$-comodule-algebra, and $M$
and $N$ are $\Gamma$-comodules, define the Cartan-Eilenberg spectral sequence to
be the spectral sequence associated to the double complex
$$ (\mathcal{N}\DD{*}(M))\,\cotensor_\Gamma\,(\mathcal{N}\DDPhi*(N)). $$
\end{definition}
The spectral sequence is unchanged starting at $E_1$ if we replace the
complex on the right by a chain-homotopic one, 
and in Section \ref{section:CESS-vs-filtration} we will
find it more convenient to use the complex
\begin{equation}\label{I-CESS}
\DD{*}(M)\,\cotensor_\Gamma\,(\mathcal{N}\DDPhi{*}(N)).
\end{equation}
By definition, we have the $E_1$ term
$$ E_1^{s,t} = \Cotor_\Gamma^t(M,\mathcal{N}\DDPhi{*}(N)) $$
and it converges to $\Cotor_\Gamma(M,N)$ as with the usual construction of the
Cartan-Eilenberg spectral sequence. As in the classical case, if $\Phi =
\Gamma\cotensor_\Sigma k$ for some coalgebra $\Sigma$, then by the version of
the change of rings theorem in Corollary \ref{change-of-rings} we may
write \begin{equation}\label{Cotor-E_1}E_1^{s,t} \isom \Cotor_\Gamma^t (M,\Phi\tensorD
(\bar{\Phi}^{\tensor s}\tensorD N))\isom \Cotor_\Sigma^t(M, \bar{\Phi}^{\tensorD
s}\tensorD N).\end{equation}
In particular, if $M=k$, then $E_1^{s,t}\isom \Ext^t_\Sigma(k,\bar{\Phi}^{\tensor
s}\tensor N)$ and the spectral sequence converges to $\Ext_\Gamma(k,N)$.
\begin{remark}
If $\Phi$ did have a coalgebra structure, we can also define the spectral
sequence in Section \ref{section:old-CESS}, and by Proposition
\ref{shear-iso-on-cosimplicial-objects}
the two spectral sequences are isomorphic via the shear isomorphism.
\end{remark}

\begin{remark}
Davis and Mahowald \cite{davis-mahowald-A(2)} studied a Cartan-Eilenberg type
spectral sequence in the setting $\Gamma = A(2)_*$, $\Sigma= A(1)_*$ (so $\Phi =
\Gamma\cotensor_\Sigma k$ is not a sub-Hopf algebra of $\Gamma$). Instead of the
cobar-inspired resolution described here, they work with a minimal resolution,
which is more convenient for computational purposes. Bruner and Rognes
\cite{bruner-rognes} construct a spectral sequence converging to
$\Ext_\Gamma(k,N)$ given the data of a $\Gamma$-comodule algebra resolution of
$k$ of the form $\Phi\tensorD R^*$ where $\Phi = \Gamma\cotensor_\Sigma k$ for a
Hopf algebra $\Sigma$ and $R^*$ is a sequence of $\Sigma$-comodules. They show
that their spectral sequence is multiplicative, given suitable multiplicative
properties of $R^*$. The construction in this section can be seen as a special
case of theirs, where $R^n = \bar{\Phi}^{\scriptscriptstyle\tensorD n}$. Our construction does not
use the presentation of $\Phi$ as $\Gamma\cotensor_\Sigma k$ except to obtain
the nicer form of the $E_1$ page in \eqref{Cotor-E_1}.
\end{remark}

\section{Margolis-Palmieri Adams spectral sequence}\label{section:MPASS}
\subsection{Background: Adams spectral sequence in $\Stable(\Gamma)$}
Given a finite spectrum $X$ and a ring spectrum $E$, the classical Adams
spectral sequence is the spectral sequence obtained by applying $\pi_*(-)$ to
the tower of fibrations
\begin{equation}\label{classical-adams-diagram} \xymatrix{
X\ar[d] & \bar{E}\sm X\ar@{.>}[l]\ar[d] & \bar{E}\sm \bar{E}\sm X\ar@{.>}[l] &
\dots\ar@{.>}[l]
\\E\sm X\ar[ru] & E\sm \bar{E}\sm X\ar[ru]
}\end{equation}
where $\bar{E}$ is the cofiber of
the unit map $S\to E$. If $E_*E$ is flat as an $E_*$-algebra, then the $E_2$
page is given by $\Ext_{E_*E}(E_*,E_*X)$.

This construction makes sense in the context of an arbitrary tensor triangulated
category $(\mathcal{C},\tensor,\Id)$. Given a ring object $E$ and another object
$X$ of $\mathcal{C}$, let $\bar{E}$ be the cofiber of the unit map $\Id\to E$.
Then one can construct the same tower of fibrations \eqref{classical-adams-diagram} and apply
the functor $\Hom_{\mathcal{C}}(\Id,-)$, giving rise to a spectral sequence
which, under favorable conditions, converges to (a completion of)
$\Hom_{\mathcal{C}}(\Id,X)$.

Following Palmieri \cite{palmieri-book}, we study this generalized Adams spectral
sequence in the case $\mathcal{C}=\Stable(\Gamma)$, the category whose objects
are unbounded cochain complexes of injective $\Gamma$-comodules and whose
morphisms are chain complex morphisms modulo chain homotopy. The reason to work
in this setting is the fact that $$\Hom_{\Stable(\Gamma)}(M,N) =
\Ext_\Gamma(M,N)$$ for $\Gamma$-comodules $M$ and $N$ (we abuse notation by
identifying $M$ with its image under the functor $\Comod_\Gamma\to
\Stable(\Gamma)$ given by taking injective resolutions).
Thus one can use techniques from homotopy theory to study Ext groups.

%Barthel, Heard, and
%Valenzuela \cite[Definition 4.8]{BHV} give an alternate construction of
%$\Stable(\Gamma)$ as a subcategory of $D(\Gamma)$ such that the inclusion
%functor is a Bousfield localization. 

\begin{remark}
The reader may wonder why we have chosen $\Stable(\Gamma)$ instead of the
more familiar derived category $D(\Gamma)$, as there is also an identification
$\Hom_{D(\Gamma)}(M,N) = \Ext_\Gamma(M,N)$. The reason is that
$\Stable(\Gamma)$ is a better setting for studying localized Ext groups: if
$x\in \Ext_\Gamma(k,k)$ is a non-nilpotent element, $M$ and $N$ are
$\Gamma$-comodules,
and $x^{-1}N$ is the colimit of multiplication by $x$ in $\Stable(\Gamma)$, then
$$ \Hom_{\Stable(\Gamma)} (M, x^{-1}N) = x^{-1}\Ext_\Gamma(M,N)$$
but the analogous statement in $D(\Gamma)$ is not guaranteed to hold.
As we care most about the constructions defined in this paper after localizing
at a non-nilpotent element, and so this property is essential to ensuring that
this localization is well-behaved.
\end{remark}

%\begin{random}{Notation}
%%In the applications of interest to us, $\Gamma$ is a graded Hopf algebra (such
%%as the dual Steenrod algebra); our notation reflects this extra ``internal''
%%grading, but none of the constructions or comparison results use this grading.
%\fixme{I'm using $\pi_*$ instead of $\pi_{**}$ because I don't want to bring up
%the internal grading of $\Gamma$, since it is irrelevant to the issues in this
%paper.}
%To emphasize the analogy with the classical stable homotopy category, we use the
%notation
%\begin{align*}
%\pi_{*}(X) & \colonequals X_{*}\colonequals \Hom_{\Stable(\Gamma)}(k,X)
%\\X_{*}Y & \colonequals \Hom_{\Stable(\Gamma)}(k,X\tensor Y)
%\end{align*}
%for $X,Y\in \Stable(\Gamma)$. Note that, if $M$ is a
%$\Gamma$-comodule, then $\pi_{*}(M)=\Ext_\Gamma(k,M)$.
%\end{random}

The Adams spectral sequence in this setting was first studied by Margolis
\cite{margolis} in the case where $\Gamma$ is the dual Steenrod algebra, work
which was extended and generalized by Palmieri. If $E\in \Stable(\Gamma)$ is a
ring object (for example, a $\Gamma$-comodule algebra) and $X\in
\Stable(\Gamma)$ we refer to the $E$-based Adams spectral sequence in
$\Stable(\Gamma)$ computing $\Hom_{\Stable(\Gamma)}(k,X)$ as the \emph{$E$-based
Margolis-Palmieri Adams spectral sequence} (MPASS). Analogously to the classical
Adams flatness condition, if $\Hom_{\Stable(\Gamma)}(k,E\tensorD E)$ is flat over
$\Hom_{\Stable(\Gamma)}(k,E)$, then the $E_2$
page of the MPASS is
\begin{equation}\label{MPASS-E_2} E_2 \isom \Ext_{\Hom_{\Stable(\Gamma)}(k,E\tensorD
E)}(\Hom_{\Stable(\Gamma)}(k,E),\ \Hom_{\Stable(\Gamma)}(k,E\tensorD X)).
\end{equation}
Palmieri \cite[Proposition 1.4.3]{palmieri-book} identifies finiteness
conditions on $E$ and $X$ under which the MPASS converges to
$\Hom_{\Stable(\Gamma)}(k,X)$.
The motivating application is the case where $\Gamma$ is the dual Steenrod
algebra, $X = H_*(Y)$ for a finite spectrum $Y$, and $E$ is a subalgebra of $A$.
Then the MPASS
$$ E_2 \isom \Ext_{\Ext_A(k,E\tensor E)}(\Ext_A(k,E), \Ext_A(k,E\tensor
H_*Y))\implies \Ext_A(k,H_*Y) $$
converges to the $E_2$ page of the Adams spectral sequence for $\pi_*Y$.
%If
%$x\in \Ext_A(k,k)$ is a non-nilpotent element and
%we wish instead to compute the $x$-local (classical) Adams $E_2$ page, replace
%$X$ and $E$ with $x^{-1} X$ and $x^{-1} E$. 
%(Note that, as with the Cartan-Eilenberg
%spectral sequence, convergence in the localized case must be checked
%separately.)

\subsection{Comparison: MPASS vs. Cartan-Eilenberg spectral sequence}
\begin{theorem}\label{CESS=MPASS}
Given a left $\Gamma$-comodule-algebra $\Phi$ and a left $\Gamma$-comodule $N$, 
the Cartan-Eilenberg spectral sequence
$$ \ED{s,*}1 = H^*\big(\DD*(k)\cotensor_\Gamma (\mathcal{N}\DDPhi{s}(N))\big) \implies \Cotor^*_\Gamma(k,N)\isom \Ext_\Gamma^*(k,N)$$
coincides starting at $E_1$ with the $\Phi$-based MPASS
$$ E_1^{s,*} = \Ext_\Gamma^*(k,\Phi\tensorD \bar{\Phi}^{\tensorD s}\tensorD N)
\implies \Ext_\Gamma^*(k,N).$$
\end{theorem}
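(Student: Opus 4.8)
The plan is to exhibit both spectral sequences as arising from a single object, the cosimplicial $\Gamma$-comodule $\DDPhi\bullet(N)$ attached to the monad $UF_\Phi$ (with $F_\Phi = \Phi\tensorD -$), after applying the cohomological functor $\Hom_{\Stable(\Gamma)}(k,-)\isom\Ext_\Gamma(k,-)\isom\Cotor_\Gamma(k,-)$. The Cartan--Eilenberg side computes this functor through the explicit cobar model $\DD*(k)\cotensor_\Gamma-$, whereas the MPASS computes it through the $\Phi$-based Adams tower in $\Stable(\Gamma)$. Once I identify the Adams tower with the tower attached to $\DDPhi\bullet(N)$ and check that the cobar model is compatible with that tower, the two spectral sequences will coincide from $E_1$ onward.

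First I would pin down the $\Phi$-based Adams tower. The unit $k\to\Phi$ is a map of $\Gamma$-comodules split over $k$ by the augmentation $\Phi = k\dsum\bar{\Phi}$; hence the short exact sequence $0\to k\to\Phi\to\bar{\Phi}\to 0$ is $k$-split and represents a cofiber sequence $\Id\to\Phi\to\bar{\Phi}$ in $\Stable(\Gamma)$. Tensoring ($\tensorD$) with $Y_s\colonequals\bar{\Phi}^{\tensorD s}\tensorD N$ yields the fundamental cofiber sequences $Y_s\to\Phi\tensorD Y_s\to Y_{s+1}$, which are precisely the stages of the tower \eqref{classical-adams-diagram} for $E=\Phi$, $X=N$; these are exactly the cofiber sequences encoded by the cosimplicial object $\DDPhi\bullet(N)$, whose normalization has $s$-th term $\Phi\tensorD\bar{\Phi}^{\tensorD s}\tensorD N = \mathcal{N}\DDPhi{s}(N)$. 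Moreover the composite $\Phi\tensorD Y_s\to Y_{s+1}\to\Phi\tensorD Y_{s+1}$ (cofiber projection followed by the unit) is the alternating sum of cofaces, i.e. the cobar differential. Applying $\Hom_{\Stable(\Gamma)}(k,-)=\Ext_\Gamma(k,-)$ then produces the MPASS with $E_1^{s,*}=\Ext_\Gamma^*(k,\Phi\tensorD\bar{\Phi}^{\tensorD s}\tensorD N)$ and $d_1$ induced by the cobar differential.

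Next I would run the model $\DD*(k)\cotensor_\Gamma-$ through this tower. Because $\DD*(k)\isom\DL*(k)$ by the shear isomorphism (Proposition \ref{shear-iso-on-cosimplicial-objects}) and each term of $\DL*(k)$ is an extended (cofree) comodule, each term $\DD^t(k)\cotensor_\Gamma(-)$ is naturally isomorphic, on underlying $k$-modules, to $\Gamma^{\tensor t}\tensor(-)$; in particular $\DD*(k)\cotensor_\Gamma-$ carries the $k$-split cofiber sequences $Y_s\to\Phi\tensorD Y_s\to Y_{s+1}$ to short exact sequences of cochain complexes. The resulting long exact sequences assemble into an exact couple computing $\Cotor_\Gamma(k,-)$ of the tower, and since a $k$-split short exact sequence of comodules induces the same long exact sequence as the triangle it represents in $\Stable(\Gamma)$, this exact couple is the MPASS exact couple. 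On the other hand, it is visibly the exact couple of the $s$-filtration on $\Tot\big(\DD*(k)\cotensor_\Gamma\mathcal{N}\DDPhi*(N)\big)$, whose $E_1$-page is $H^*\big(\DD*(k)\cotensor_\Gamma\mathcal{N}\DDPhi{s}(N)\big)$, namely the Cartan--Eilenberg spectral sequence of Definition \ref{new-CESS}. Hence the two spectral sequences coincide from $E_1$, the common $E_1$-term being $\Cotor_\Gamma^t(k,\mathcal{N}\DDPhi{s}(N))\isom\Ext_\Gamma^t(k,\Phi\tensorD\bar{\Phi}^{\tensorD s}\tensorD N)$.

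The step I expect to be the main obstacle is the precise matching of filtrations in the previous paragraph: showing that feeding the concrete model $\DD*(k)\cotensor_\Gamma-$ into the triangulated Adams tower reproduces the $s$-filtered double complex on the nose, so that the two exact couples literally agree rather than merely sharing an $E_1$-page. The agreement of $E_1$ and of $d_1$ (both induced by the cobar differential) is immediate; the real work is verifying that the connecting maps of the cobar-model long exact sequences coincide with the filtration-jumping maps of the double complex. This rests on the naturality of the cobar model for $\Cotor_\Gamma(k,-)$ and on the $k$-splitness of the fundamental cofiber sequences, which is exactly what keeps $\DD*(k)\cotensor_\Gamma-$ exact throughout the argument.
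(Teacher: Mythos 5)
Your proposal is correct and follows essentially the same route as the paper: both proofs realize the MPASS exact couple by applying the cobar model $\DD*(k)\cotensor_\Gamma(-)$ to the cofiber sequences $\bar{\Phi}^{\tensorD s}\tensorD N\to \Phi\tensorD\bar{\Phi}^{\tensorD s}\tensorD N\to \bar{\Phi}^{\tensorD s+1}\tensorD N$ and identify the result with the $s$-filtration exact couple on the total complex, the comparison map being induced by the unit $\bar{\Phi}^{\tensorD s}\tensorD N\to\Phi\tensorD\bar{\Phi}^{\tensorD s}\tensorD N$. The step you flag as the main obstacle (compatibility of the connecting maps with the filtration-jumping maps) is exactly the point the paper also treats, via the observation that $H^*(F^sT^*)\isom\Ext_\Gamma(k,\bar{\Phi}^{\tensor s}\tensor N)$ compatibly with the rest of the exact couple.
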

\begin{proof}
Given a chain complex $A^*$, let $\mathcal{Q}A$ denote the quotient chain
complex whose terms are given by $\mathcal{Q}A^n = A^n\big/ \sum_{i=1}^n
\im(d^i:A^{n-1}\to A^n)$. In particular, $\mathcal{Q}\DDPhi *(N) \isom
\Phi\tensorD \bar{\Phi}^{\tensorD *}\tensorD N$.
It is a general fact (see \cite[Theorem III.2.1 and Theorem
III.2.4]{goerss-jardine} for the dual version) that there is an isomorphism of
chain complexes $\mathcal{N}^*A \too{\isom} \mathcal{Q}^*A$, so instead of the double
complex $\DD*(k)\cotensor_\Gamma (\mathcal{N}\DDPhi{s}(N))$ we may use $$
\DD{*}(k)\cotensor_\Gamma (\mathcal{Q}\DDPhi*(N)) =
\Gamma^{\tensorD t+1}\cotensor_\Gamma (\Phi\tensorD \bar{\Phi}^{\tensorD
s}\tensorD N).$$

We will express the exact couples for both spectral sequences as coming from
applying $\Ext_\Gamma(k,-)$ to fiber sequences in $\Stable(\Gamma)$, and show
that there is a quasi-isomorphism connecting those fiber sequences.
We begin by describing the exact couple for the
Cartan-Eilenberg spectral sequence more explicitly.
Let $T^*$ be the total complex defined by $T^n = \dsums_{s+t=n}
\Gamma^{\tensorD t+1}\cotensor_\Gamma (\Phi\tensorD \bar{\Phi}^{\tensorD
s}\tensorD N)$. The Cartan-Eilenberg spectral sequence arises from the
filtration $F^s$ on this total complex defined by:
$$ F^{s_0} T^n = \dsums_{s+t=n\atop s\geq s_0}
\Gamma^{\tensorD t+1}\cotensor_\Gamma (\Phi\tensorD \bar{\Phi}^{\tensorD
s}\tensorD N).$$
For the associated graded we have:
\begin{align*}
F^{s_0}/F^{s_0+1}T^n  & = \Gamma^{\tensorD
n-s_0+1}\cotensor_\Gamma (\Phi\tensorD \bar{\Phi}^{\tensorD s_0}\tensorD N)
\\ H^*(F^{s_0}/F^{s_0+1}T^*) & = \Cotor^*_\Gamma(k,\Phi\tensorD
\bar{\Phi}^{\tensorD s}\tensorD N).
\end{align*}
By definition, the Cartan-Eilenberg spectral sequence arises from the exact
couple
\begin{equation}\label{CESS-exact-couple} \xymatrix{
H^*(F^s T^*)\ar[rd] &  & H^*(F^{s+1}T^*)\ar[ll]
\\ & H^*(F^s/F^{s+1}T^*).\ar@{.>}[ru]
}\end{equation}
%To turn this into a form that is easier to compare with the exact couple for the
%MPASS, think of the total complex $T^*$ as a chain complex of its columns
%$C_s = (\Gamma^{\tensor *+1})\cotensor_\Gamma(\Phi\tensor \bar{\Phi}^{\tensor
%s}\tensor N)$, where $C_s$ is a chain complex whose homology is
%$\Cotor_\Gamma(k,\Phi\tensor \bar{\Phi}^{\tensor s}\tensor N)$.

%Let $X^s$ denote the complex
%$$ \xymatrix@C=40pt{
%\Phi\tensorD \bar{\Phi}^{\tensorD s}\tensorD N\ar[r]^-{\eta\tensor \Id} &  \Phi\tensorD \bar{\Phi}^{\tensorD
%s+1}\tensorD N\ar[r]^-{\eta\tensor \Id} &  \Phi\tensorD \bar{\Phi}^{\tensorD
%s+2}\tensorD N\ar[r]^-{\eta \tensor \Id}  & \dots 
%}$$
%considered as an element of $\Stable(\Gamma)$ (that is,
%$X^s$ is a complex of injective comodules quasi-isomorphic to the above
%complex). Note that the complex represented by $X^0$ is a
%$\Gamma$-comodule resolution for $N$, and hence is quasi-isomorphic to
%$N$; in general, there is a quasi-isomorphism
%\begin{equation}\label{Xs-qis}X^s\, \l{\text{qis}}\hteq\, \ker(\Phi\tensorD \bar{\Phi}^{\tensorD
%s}\tensorD N\to \Phi\tensorD \bar{\Phi}^{\tensorD s+1}\tensorD N) \isom
%\bar{\Phi}^{\tensorD s}\tensorD N.
%\end{equation}
%We can express \eqref{CESS-exact-couple-filtration} as the exact couple arising
%from applying $\Cotor_\Gamma(k,-)\isom \Ext_\Gamma(k,-)$ to the cofiber sequence
%\begin{equation}\label{Xs-cofiber-sequence} X^{s+1}\to X^s \to
%\Phi\tensorD \bar{\Phi}^{\tensorD s+1}\tensorD N. \end{equation}

On the other hand, the MPASS comes from the exact couple obtained by applying
the functor $\Ext_\Gamma(k,-)$ to the cofiber sequence
\begin{equation}\label{MPASS-cofiber-sequence} \bar{\Phi}^{\tensorD s+1}\tensorD
N\to (\bar{\Phi}^{\tensorD s}\tensorD N)[1]\to (\Phi\tensorD\bar{\Phi}^{\tensor
s}\tensorD N)[1]. \end{equation} in $\Stable(\Gamma)$. Since $\Ext_\Gamma(k,-)\isom
\Cotor_\Gamma(k,-)$, this is isomorphic to the exact couple
\begin{equation}\label{MPASS-exact-couple} \xymatrix@C=-20pt{
H^*(\DD*(k)\cotensor_\Gamma(\bar{\Phi}^{\tensorD s}\tensorD
N))\ar[rd] && \ar[ll] H^*(\DD*(k)\cotensor_\Gamma(\bar{\Phi}^{\tensorD
s+1}\tensorD N))
\\ & H^*(\DD*(k)\cotensor_\Gamma(\Phi\tensorD
\bar{\Phi}^{\tensorD s}\tensorD N))\ar@{.>}[ru]
}\end{equation}

We claim that \eqref{MPASS-exact-couple} and \eqref{CESS-exact-couple} are
isomorphic exact couples. For the same reason that $H^*(T^*)\isom
\Ext_\Gamma(k,M)$, we have $H^*(F^sT^*) \isom \Ext_\Gamma(k,\bar{\Phi}^{\tensor
s}\tensor M)$. Moreover, there is a map $\DD*(k)\cotensor_\Gamma
(\bar{\Phi}^{\tensorD s}\tensorD M)\to F^sT^*$ induced by the unit map
$\bar{\Phi}^{\tensorD s}\tensorD N\to \Phi\tensorD\bar{\Phi}^{\tensorD
s}\tensorD N$ that induces this isomorphism in cohomology compatibly with the
rest of the exact couple.
\end{proof}
%\begin{corollary}
%There is an action of the Steenrod algebra on the MPASS for $N = k$ that is
%compatible with the action of the Steenrod algebra on $\Ext_\Gamma(k,k)$.
%\end{corollary}
%\begin{proof}
%Since $k\cotensor_\Gamma M \isom \Hom_\Gamma(k,M)$, it is clear that the
%Cartan-Eilenberg spectral sequence is isomorphic to the spectral sequence
%obtained by applying $\Hom_\Gamma(k,-)$ to the double complex of injective
%$\Gamma$-comodules $\Gamma\tensorD $
%The MPASS arises from applying $\Hom_\Gamma(k,-)$
%to a bi-cosimplicial algebra in $\Gamma$-comodules: all the structure maps are commutative algebra maps in $\Comod_\Gamma$.
%This is exactly the dual of the setting in which Sawka \cite{sawka} constructs
%Steenrod operations.
%\end{proof}

The comparison statement shows that the $E_2$ page of the Cartan-Eilenberg
spectral sequence coincides with the  MPASS $E_2$ page \eqref{MPASS-E_2}.
\begin{corollary}\label{MPASS-flatness}
If $\Ext_\Gamma(k,\Phi\tensor \Phi)$ is flat as a module over
$\Ext_\Gamma(k,\Phi)$, then the Cartan-Eilenberg spectral sequence of Definition
\ref{new-CESS} has $E_2$ term given by
$$ E_2^{**}\isom \Ext_{\Ext_\Gamma(k,\Phi\tensor \Phi)}^*(\Ext_\Gamma(k,\Phi),
\Ext_\Gamma^*(k,\Phi\tensorD N)) .$$
If $\Phi= \Gamma\cotensor_\Sigma k$ for some coalgebra $\Sigma$, then by the
change of rings theorem (Corollary \ref{change-of-rings}) this has the form
$$ E_2^{**} = \Ext_{\Ext_\Sigma(k,\Phi)}(\Ext_\Sigma(k,k),\
\Ext_\Sigma(k,N)). $$
For $x\in \Ext_\Gamma(k,k)$, the $x$-localized Cartan-Eilenberg spectral
sequence has $E_2$ term
$$ \Ext_{x^{-1}\Ext_\Sigma(k,\Phi)} (x^{-1}\Ext_\Sigma(k,k),\ x^{-1}\Ext_\Sigma(k,N)).$$
\end{corollary}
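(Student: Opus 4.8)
The plan is to deduce all three displays from the identification already furnished by Theorem \ref{CESS=MPASS}, and then to manipulate the result by change of rings and by localization. For the first display I would argue as follows. Theorem \ref{CESS=MPASS} shows that the Cartan-Eilenberg spectral sequence of Definition \ref{new-CESS} agrees with the $\Phi$-based MPASS starting at $E_1$, hence on every later page, in particular at $E_2$. Specializing the general MPASS $E_2$ formula \eqref{MPASS-E_2} to the ring object $E=\Phi$ and the object $X=N$, and using the identification $\Hom_{\Stable(\Gamma)}(k,-)\isom \Ext_\Gamma(k,-)$ throughout, the flatness hypothesis in \eqref{MPASS-E_2} becomes precisely the stated condition that $\Ext_\Gamma(k,\Phi\tensorD\Phi)$ be flat over $\Ext_\Gamma(k,\Phi)$, and the $E_2$ term becomes the first displayed expression. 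Thus the first statement is essentially a transcription.

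For the second display I would apply the change of rings theorem (Corollary \ref{change-of-rings}) with $\Phi=\Gamma\cotensor_\Sigma k$ and $M=k$, letting the coefficient comodule range over $k$, $\Phi$, and $N$, to obtain
\begin{align*}
\Ext_\Gamma(k,\Phi) &\isom \Ext_\Sigma(k,k), &
\Ext_\Gamma(k,\Phi\tensorD\Phi) &\isom \Ext_\Sigma(k,\Phi), &
\Ext_\Gamma(k,\Phi\tensorD N) &\isom \Ext_\Sigma(k,N).
\end{align*}
The delicate point is that the subscript in the $E_2$ term records a coalgebroid structure: $\Ext_\Gamma(k,\Phi\tensorD\Phi)$ is a coalgebroid over $\Ext_\Gamma(k,\Phi)$, and the outer $\Ext$ is comodule $\Ext$ (equivalently $\Cotor$) for that structure. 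So I would verify that the three change of rings isomorphisms are compatible with the structure maps, i.e. that they assemble into an isomorphism of Hopf algebroids $(\Ext_\Gamma(k,\Phi),\Ext_\Gamma(k,\Phi\tensorD\Phi))\isom(\Ext_\Sigma(k,k),\Ext_\Sigma(k,\Phi))$ carrying the relevant comodule to its counterpart. Granting this, comodule $\Ext$ over isomorphic Hopf algebroids agrees, the flatness hypothesis transports to flatness of $\Ext_\Sigma(k,\Phi)$ over $\Ext_\Sigma(k,k)$, and the second display follows. This compatibility is the main obstacle: everything else is formal, but checking that the change of rings map respects the unit and comultiplication of the coalgebroid requires unwinding the structure maps, which arise from the free-forgetful adjunction $F_\Phi\adjoint U$ used to build the resolution, and comparing them with their $\Sigma$-analogues.

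For the third display I would rerun the whole construction after inverting $x$. Multiplication by $x\in\Ext_\Gamma(k,k)$ acts on the spectral sequence, and $x^{-1}(-)$, being a filtered colimit, is exact; it therefore commutes with the passage to cohomology and hence with the formation of each page $E_r$. Concretely, localizing replaces $\Phi$ and $N$ by their images under $x^{-1}(-)$ in $\Stable(\Gamma)$, where $\Hom_{\Stable(\Gamma)}(k,x^{-1}N)\isom x^{-1}\Ext_\Gamma(k,N)$. Under the localized flatness hypothesis, that $x^{-1}\Ext_\Sigma(k,\Phi)$ is flat over $x^{-1}\Ext_\Sigma(k,k)$ (which may hold even when the unlocalized condition fails), the same chain of identifications as in the first two displays, now carried out in the localized setting, produces the asserted $E_2$ term. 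The one thing to check here is that $x^{-1}(-)$, being flat, commutes with the $\Cotor$ computing $E_2$ — one localizes the cobar complex and invokes exactness — so that localizing the $E_2$ page and forming the $E_2$ page of the localized spectral sequence coincide; convergence, as noted in the introduction, is a separate matter not addressed by this corollary.
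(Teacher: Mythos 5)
Your proposal is correct and follows exactly the route the paper intends: the corollary is stated without a separate proof, being justified by the sentence preceding it (``the comparison statement shows that the $E_2$ page of the Cartan-Eilenberg spectral sequence coincides with the MPASS $E_2$ page \eqref{MPASS-E_2}''), followed by the change of rings isomorphisms of Corollary \ref{change-of-rings} and localization. Your additional care about transporting the coalgebroid structure under change of rings is a point the paper glosses over, but it is the right thing to check and does not change the argument.
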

Note that, for the localized spectral sequence, one must additionally check
convergence.

\section{Cartan-Eilenberg vs. filtration spectral sequence}\label{section:CESS-vs-filtration}
It is a classical fact \cite[\S2.3]{adams-hopf-invariant} that the
Cartan-Eilenberg spectral sequence associated to the Hopf extension $\Phi\to
\Gamma\to \Sigma$ computing $\Cotor_\Gamma(M,N)$ coincides with
a filtration spectral sequence on the cobar complex $C_\Gamma(M,N)$ defined by
$$ F^s C_\Gamma^n(M,N) = \{ m[a_1|\dots|a_n]\nu \in C_\Gamma^n(M,N)\st \#(\{
a_1,\dots,a_n \}\ints G) \geq s \} $$
where
$$ G \colonequals \ker(\Gamma\to \Sigma). $$
As $G$ is an ideal in $\Gamma$ and the cobar complex $C_\Gamma^*(k,k)$ is a ring
under the concatenation product, one can say this filtration of $C_\Gamma^*(M,N)
= M\tensor C_\Gamma^*(k,k)\tensor N$ comes from the $G$-adic filtration of
$C_\Gamma^*(k,k)$. In this section, we adopt the notation of the previous
sections, but also impose the additional condition that $\Phi =
\Gamma\cotensor_\Sigma k$ where $\Gamma\to\Sigma$ is a map of Hopf algebras.

Let $E_r^{**}$ denote this filtration spectral sequence, and let $\ED{**}r$
denote the generalized Cartan-Eilenberg spectral sequence described in
Definition
\ref{new-CESS}. Adapting an argument for the classical Cartan-Eilenberg spectral
sequence, we will
show that these agree starting at $r=1$. As a double complex spectral sequence
can be viewed as a filtration spectral sequence on the total complex, it
suffices to show the following:

\begin{theorem}
There is a filtration-preserving chain map $$\theta:
\dsums_{s+t=n}(M\tensorD \Gamma^{\tensorD t+1})\cotensor_\Gamma (\mathcal{N}\DDPhi s(N))\tto C_\Gamma^n(M,N)$$
whose induced map of spectral sequences $\ED{**}r\to E_r^{**}$ is an
isomorphism on $E_1$.
\end{theorem}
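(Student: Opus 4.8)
The plan is to build $\theta$ from the iterated shear isomorphisms of the appendix, and then check the two required properties on the associated graded of the total-complex filtration. First I would put the source in a transparent form. For fixed $s$, the iterated shear $S_c$ identifies the left factor $M\tensorD\Gamma^{\tensorD t+1}$ with $M\tensor\Gamma^{\tensor t}\tensorR\Gamma$, in which only the last copy of $\Gamma$ carries a coaction; since cotensoring a cofree comodule over $\Gamma$ cancels one copy of $\Gamma$, this gives a natural isomorphism
$$(M\tensorD\Gamma^{\tensorD t+1})\cotensor_\Gamma(\Phi\tensorD\bar{\Phi}^{\tensorD s}\tensorD N)\;\isom\;M\tensor\Gamma^{\tensor t}\tensor\Phi\tensor\bar{\Phi}^{\tensor s}\tensor N,$$
under which the vertical differential becomes the $\Gamma$-cobar differential computing $\Cotor^t_\Gamma(M,\mathcal{N}\DDPhi s(N))$ and the horizontal differential becomes the normalized $\DD$-style ($\Phi$-resolution) differential. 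On this model I define $\theta$ by applying the counit to the leading (free) copy of $\Phi$, projecting each $\gamma_i$ to $\bar{\Gamma}$, and viewing each $\phi_j\in\bar{\Phi}$ as an element of $\bar{\Gamma}$:
$$\theta(m\tensor\gamma_1\tensor\dots\tensor\gamma_t\tensor\phi_0\tensor\phi_1\tensor\dots\tensor\phi_s\tensor\nu)=\epsilon(\phi_0)\cdot m[\bar{\gamma}_1|\dots|\bar{\gamma}_t|\phi_1|\dots|\phi_s]\nu.$$

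The content of the chain-map verification is that the total differential $d_t\pm d_s$ of the double complex maps under $\theta$ to the single $\Gamma$-cobar differential $d_C$ on $C_\Gamma^{s+t}(M,N)$. The vertical part $d_t$ accounts for the coaction of $M$, the comultiplications of the $\gamma_i$, and the coaction into the coefficient comodule; the horizontal $\DD$-style insertion-of-$1$ maps $d_s$ must, after $\theta$, reproduce exactly the comultiplications $\Delta_\Gamma\phi_j$ of the $\bar{\Phi}$-entries. Since insertion of $1$ and comultiplication are a priori unrelated, matching them is precisely what the explicit formulas for the iterated shear and its inverse are for, together with the coalgebra counit/coassociativity identities and compatibility of the antipode. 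I expect this differential-matching to be the main obstacle. Filtration preservation, by contrast, is immediate: because $\Phi=\Gamma\cotensor_\Sigma k$, applying $\epsilon_\Gamma\tensor\mathrm{id}_\Sigma$ to the defining relation $\rho(\phi)=\phi\tensor 1$ for the right $\Sigma$-coaction $\rho=(\mathrm{id}\tensor\pi)\Delta$ shows $\pi(\phi)=\epsilon(\phi)$, so $\bar{\Phi}\subseteq G=\ker\pi$. Hence on the $(s,t)$ summand the last $s$ cobar entries $\phi_1,\dots,\phi_s$ all lie in $G$, so $\theta$ lands in $F^sC_\Gamma^{s+t}(M,N)$, and summing over $s\geq s_0$ gives $\theta(F^{s_0})\subseteq F^{s_0}$.

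Finally, a filtration-preserving chain map induces a map $\bar{\theta}^s$ on the $s$-th graded piece, and I must show each $\bar{\theta}^s$ is a quasi-isomorphism, which gives the isomorphism on $E_1$. The source graded piece is $(\DD*(M)\cotensor_\Gamma\mathcal{N}\DDPhi s(N),\,d_t)$, whose cohomology is $\Cotor^t_\Gamma(M,\mathcal{N}\DDPhi s(N))\isom\Cotor^t_\Sigma(M,\bar{\Phi}^{\tensor s}\tensor N)$ by the change-of-rings isomorphism already recorded in \eqref{Cotor-E_1} (Corollary \ref{change-of-rings}). For the target, $G$ is a Hopf ideal with $\bar{\Gamma}/G\isom\bar{\Sigma}$, so the $E_0$-differential $d_0$ on $\gr^s C_\Gamma^*(M,N)$ is the $\Sigma$-cobar differential acting on the $\bar{\Sigma}$-slots with the $s$ chosen $G$-slots treated as inert coefficients; this is the standard computation underlying Ossa's argument, and it identifies $H^*(\gr^s C_\Gamma^*,d_0)$ with $\Cotor^t_\Sigma(M,\bar{\Phi}^{\tensor s}\tensor N)$ as well. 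It then remains to check that $\bar{\theta}^s$ intertwines these two change-of-rings identifications, which once more reduces to the explicit shear formulas of the appendix; granting this, $\bar{\theta}^s$ is an isomorphism on cohomology and the induced map $\ED{**}r\to E_r^{**}$ is an isomorphism at $r=1$.
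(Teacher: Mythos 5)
Your definition of $\theta$ is where the proof breaks down, and the problem is structural rather than a deferred verification. You identify the source with $M\tensor\Gamma^{\tensor t}\tensor\Phi\tensor\bar{\Phi}^{\tensor s}\tensor N$ by shearing only the left-hand factor, and then send $m\tensor\gamma_1\tensor\cdots\tensor\gamma_t\tensor\phi_0\tensor\cdots\tensor\phi_s\tensor\nu$ to $\epsilon(\phi_0)\, m[\bar{\gamma}_1|\cdots|\bar{\gamma}_t|\phi_1|\cdots|\phi_s]\nu$. This map is not a chain map, and the appendix formulas cannot repair it afterwards, because the shear must be built into $\theta$ itself. Concretely, the cobar differential of $m[\cdots|\phi_1|\cdots|\phi_s]\nu$ contains $s$ separate terms comultiplying each $\phi_j$ plus a term coacting on $\nu$, whereas on your model the horizontal differential supplies only the single normalized insertion-of-$1$ term and the vertical differential supplies only the single term coming from the \emph{diagonal} coaction on $\Phi\tensorD\bar{\Phi}^{\tensorD s}\tensorD N$, which places the one product $\sum\overline{\phi_0'\phi_1'\cdots\phi_s'\nu'}$ into a single new slot. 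Already for $M=k$, $t=0$, $s=0$ and $\phi_0$ in the augmentation ideal one finds $d_C\theta(\phi_0|\nu)=0$ while $\theta(d(\phi_0|\nu))$ retains the generically nonzero terms $\sum[\overline{\phi_0\nu'}]\nu''$ with $\nu'$ running over the reduced coaction. The paper's $\theta$ is $e\circ(S_c^{n}\tensor S^{n})$: the iterated shear is applied to the resolution factor \emph{first}, converting the insertion-of-$1$ cofaces into comultiplication cofaces and replacing each entry $x_j$ by the twisted product ${x_j}_{(j+1)}{x_{j+1}}_{(j+1)}\cdots n_{(j+1)}$, and only then does concatenation give a chain map. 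The fact that your filtration-preservation argument is trivial ($\bar{\Phi}\subset G$) is a symptom of the same issue: for the correct $\theta$ the cobar entries are these twisted products, which are not obviously in $G$, and proving that they are (Lemma \ref{shear-restriction-G}, which uses the normalization condition $x_{j-1}x_j=0$) is a substantial part of the argument.

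There is a secondary gap in the $E_1$ comparison. You assert that $H^*$ of the associated graded of the cobar filtration is $\Cotor^t_\Sigma(M,\bar{\Phi}^{\tensor s}\tensor N)$ as ``the standard computation underlying Ossa's argument,'' but that identification \emph{is} the hard half of Ossa's argument: the $s$ entries lying in $G$ occupy arbitrary positions in a cobar word, so $F^s/F^{s+1}C_\Gamma(M,N)$ is not visibly a $\Sigma$-cobar complex with inert coefficients. The paper handles this by introducing the auxiliary filtration $\til{F}$ on $C_\Gamma(M,\Gamma)$, using the contracting homotopy to show $\til{F}^{*}/\til{F}^{*+1}$ is acyclic, and extracting from the resulting long exact sequence the chain equivalence $\delta$ (iterated to $\beta$) that moves the $G$-entries to the end one at a time, reducing all of $E_0^{s,*}$ to the $s=0$ case, which is then checked directly against the change-of-rings isomorphism. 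If you wish to bypass $\beta$ you need a substitute for this step; as written it is asserted rather than proved.
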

We begin by defining the comparison map $\theta$.

\begin{definition}
Let $\til{\theta}$ denote the composition
\begin{align*}
\til{\theta}: (M\tensorD \Gamma^{\tensorD t+1})\cotensor_\Gamma
(\Phi^{\tensorD s+1}\tensorD N)& \ttoo{S_c^n\tensor S^n} (M\tensor \Gamma^{\tensor t}\tensorR
\Gamma)\cotensor_\Gamma (\Gamma\tensorL \Gamma^{\tensor s}\tensor N)
\\ & \ttoo{e}M\tensor \Gamma^{\tensor s+t}\tensor N
\end{align*}
where $S_c^n$ is $n$-fold composition of the shear isomorphism $S_c:M\tensorD
\Gamma\to M\tensorR \Gamma$, $S^n$ is the $n$-fold composition of the iterated
shear isomorphism $S:\Gamma\tensorD N\to \Gamma\tensorL N$,
and $e$ is given by $$(m|a_1|\dots|a_t|a)\tensor
(b|b_1|\dots|b_s|n)\mapsto \epsilon(ab)m|a_1|\dots|a_t|b_1|\dots|b_s|n.$$
Define $\theta$ to be the restriction of $\til{\theta}$ to
$(M\tensorD \Gamma^{\tensorD t+1})\cotensor_\Gamma (\mathcal{N}\DDPhi s(N))$.
\end{definition}
In Lemma \ref{shear-restriction-G}, we will show that this restriction lands in
$(M\tensorD \Gamma^{\tensorD t+1})\cotensor_\Gamma (\Gamma\cotensor_\Sigma
G(s)\cotensor_\Sigma N)$, where $$G(s)\colonequals
\u{G\cotensor_\Sigma \dots \cotensor_\Sigma G}_s.$$

%\begin{align*}
%\theta:(M\tensorD \Imu^s)\cotensor_\Gamma(\Gamma^{\tensorD t+1}\tensorD N)
% & \ttoo{S_c\tensor S} (M\cotensor_\Sigma G(s)\cotensor_\Sigma \Gamma)\cotensor_\Gamma
% (\Gamma\tensorL \Gamma^{\tensor t}\tensor N)
%\\ & \tto (M\tensor G^{\tensor s}\tensorL \Gamma)\cotensor_\Gamma
% (\Gamma\tensorL \Gamma^{\tensor t}\tensor N)
%\\  & \ttoo{e}  M\tensor G^{\tensor s}\tensor \Gamma^{\tensor t}\tensor
%N\hspace{10pt} \subset\hspace{10pt}
%M\tensor \Gamma^{\tensor s+t}\tensor N
%\end{align*}

We will see that $E_0^{0,*}(M,N)$ is easy to describe (and in particular it is
easy to show that $\theta$ induces an isomorphism $\ED{0,*}0(M,N)\isom
E_0^{0,*}(M,N)$), and most of the work involves identifying $E_0^{s,*}(M,N)$
(for $s>0$) with $E_0^{0,*}(M,N')$ for a different comodule $N'$, in a way that
is compatible with a similar identification for $\ED{s,*}0$. More precisely, we
will show that there is a map $\beta$ of chain complexes making the
following diagram commute.
\begin{equation}\label{CESS-filtration-goal} \xymatrix@C=30pt{
(M\tensorD \Gamma^{\tensorD *})\cotensor_\Gamma\, \mathcal{N}\DDPhi
0(G(s)\cotensor_\Sigma N)\ar@{=}[r]\ar[d]_{\Id\tensor S^{-1}}^\isom &
\ED{0,*}0(M,G(s)\cotensor_\Sigma N)\ar[r]_-\hteq^-\theta &
E^{0,*}_0(M,G(s)\cotensor_\Sigma N)\ar[d]_-\hteq^-\beta
\\(M\tensorD \Gamma^{\tensorD *})\cotensor_\Gamma\, \mathcal{N}\DDPhi
s(N)\ar@{=}[r] & \ED{s,*}0(M,N)\ar[r]^-\theta & E^{s,*}_0(M,N)
}\end{equation}
It suffices to show the following:
\begin{enumerate} [label=(\arabic*)]
\item $\theta$ is a filtration-preserving chain map;
\item $S^{-1}$ gives rise to an isomorphism $\mathcal{N}\DDPhi
0(G(s)\cotensor_\Sigma N)\to \mathcal{N}\DDPhi s(N)$;
\item there exists a chain equivalence $\beta$ making the diagram commute;
\item $\theta$ is a chain equivalence for $s=0$.
\end{enumerate}
(1) says we have written down a filtration-preserving map between total
complexes, and (2)--(4) allow us to use the diagram to show that $\theta$ is a
chain equivalence for all $s\geq 0$.
We prove (1) in Lemma \ref{til-theta-chain-map} and Corollary
\ref{theta-filtration-preserving}, (2) in Corollary
\ref{goal-left-vertical-map}, (3) in Corollary/ Definition
\ref{corollary-definition}, and (4) in Proposition \ref{theta-s=0}.

Both the structure of the proof and the argument for (2) are
adapted from an argument attributed to Ossa appearing as
\cite[A1.3.16]{green}, showing that the classical Cartan-Eilenberg spectral
sequence coincides with the filtration spectral sequence under discussion. Our
proof is more complicated than Ossa's original, as the spectral sequence of
Definition \ref{new-CESS} generalizes the classical Cartan-Eilenberg spectral
sequence only after the iterated shear isomorphism has been applied. It is not
natural to describe the cobar filtration spectral sequence after applying the
isomorphism, so we must translate between the two contexts using explicit
formulas for the iterated shear isomorphism.

\begin{lemma}\label{til-theta-chain-map}
$\til{\theta}$ is a chain map $\dsums_{s+t=n} (M\tensorD \Gamma^{\tensorD
t+1})\cotensor_\Gamma \DDPhi s(N) \to C_\Gamma^n(M,N)$.
\end{lemma}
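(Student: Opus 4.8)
The plan is to use the factorization $\til\theta=e\circ(S_c^n\tensor S^n)$ built into the definition and to verify the chain-map property one factor at a time. By Proposition~\ref{shear-iso-on-cosimplicial-objects} and its right-handed analogue for $S_c$, the iterated shear isomorphisms assemble into isomorphisms of cosimplicial objects $\DD\bullet(M)\to\DR\bullet(M)$ and $\DD\bullet(N)\to\DL\bullet(N)$; since they are $\Gamma$-comodule maps they induce an isomorphism of double complexes
$$(M\tensorD\Gamma^{\tensorD t+1})\cotensor_\Gamma(\Gamma^{\tensorD s+1}\tensorD N)\too{\isom}(M\tensor\Gamma^{\tensor t}\tensorR\Gamma)\cotensor_\Gamma(\Gamma\tensorL\Gamma^{\tensor s}\tensor N)$$
intertwining the total differentials. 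Thus $S_c^n\tensor S^n$ is already a chain map, and the lemma reduces to showing that the collapse map $e$ is a chain map from the two-sided double complex $\DR{*}(M)\cotensor_\Gamma\DL{*}(N)$ to the one-sided cobar complex $C_\Gamma^{*}(M,N)$. I would prove this for the full $\Gamma$-version and then restrict to the subcomplex $\DD{*}(M)\cotensor_\Gamma\DDPhi{*}(N)$, which is legitimate: $\Phi^{\tensorD s+1}\tensorD N\into\Gamma^{\tensorD s+1}\tensorD N$ is a subcomplex because the bar differentials insert $1\in\Phi=1\in\Gamma$, and $\cotensor_\Gamma$ is left exact.

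Writing a typical element as $x=(m|a_1|\dots|a_t|a)\tensor(b|b_1|\dots|b_s|n)$, one has $e(x)=\epsilon(ab)\,m|a_1|\dots|a_t|b_1|\dots|b_s|n$. I would sort the coface maps making up the total differential $d=d_M\tensor\Id+(-1)^t\Id\tensor d_N$ (the differentials of $\DR{*}(M)$ and $\DL{*}(N)$) into \emph{interior} and \emph{junction} maps. The interior maps are the coaction $\partial^0_M$ on $M$ and the coproducts $\partial^1_M,\dots,\partial^t_M$ on $a_1,\dots,a_t$, together with the coproducts $\partial^1_N,\dots,\partial^s_N$ on $b_1,\dots,b_s$ and the coaction $\partial^{s+1}_N$ on $n$. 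A direct check shows $e$ carries each interior map, with its sign, to the corresponding term of the cobar differential $d_C$ acting on $e(x)$; the reindexing $\partial^j_N\mapsto\partial^{t+j}$ absorbs precisely the $(-1)^t$ prefactor, so the interior terms of $e(d(x))$ assemble to exactly $d_C(e(x))$. This portion is routine bookkeeping.

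The crux is the pair of junction maps: the last face $\partial^{t+1}_M$, the coproduct on the cotensor factor $a$, and the first face $\partial^0_N$, the coproduct on the cotensor factor $b$. Since $d_C$ has no term creating a new factor at the seam between $a_t$ and $b_1$, these two must cancel. Applying $e$ after $\partial^{t+1}_M$ augments the new rightmost factor, and the counit identity $\sum a'\epsilon(a'')=a$ yields $e(\partial^{t+1}_M x)=\epsilon(b)\,m|a_1|\dots|a_t|a|b_1|\dots|b_s|n$; symmetrically $e(\partial^0_N x)=\epsilon(a)\,m|a_1|\dots|a_t|b|b_1|\dots|b_s|n$. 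These two are equal: applying to the defining equalizer relation of $\cotensor_\Gamma$ — that the right coaction $\Delta$ on the $a$-factor agrees with the left coaction $\Delta$ on the $b$-factor — the map that augments the two outer factors at the seam and keeps the middle one produces the first expression from one side of the relation and the second from the other. Since $\partial^{t+1}_M$ and $\partial^0_N$ enter $d$ with opposite signs $(-1)^{t+1}$ and $(-1)^t$, the two cancel, completing the identity $e(d(x))=d_C(e(x))$.

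I expect the junction cancellation to be the only genuine obstacle. The behavior of the shear maps is supplied by Proposition~\ref{shear-iso-on-cosimplicial-objects}, and matching the interior faces is mechanical; the points requiring care are pinning down the exact coface formulas for the right-handed resolution $\DR{*}(M)$ and fixing the double-complex sign convention (matching that of~\eqref{classical-cess}) so that the two junction terms indeed appear with opposite signs.
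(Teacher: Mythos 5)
Your proposal is correct and follows essentially the same route as the paper: factor $\til{\theta}$ as $e\circ(S_c^n\tensor S^n)$, observe that the iterated shear maps are chain maps of $\Gamma$-comodule complexes and hence descend to the cotensor product, and then verify that $e$ is a chain map by matching the interior coface terms with the cobar differential and cancelling the two junction terms (coproduct on $a$ versus coproduct on $b$), which carry opposite signs $(-1)^{t+1}$ and $(-1)^t$. Your explicit justification of the junction cancellation via the counit identity and the defining equalizer relation of $\cotensor_\Gamma$ spells out a step the paper only asserts, but the argument is the same.
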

\begin{proof}
%It suffices to check that 
%\begin{equation}\label{theta-chain-map} \xymatrix{
%(M\tensorD \Phi^{\tensorD s+1})\cotensor_\Gamma
%(\Gamma^{\tensorD t+1}\tensorD N)\ar[r]^-{\til{\theta}}\ar[d]_-{(-1)^{s+1}d_\Phi\tensor \Id +
%(-1)^s\Id\tensor d_\Gamma} & M\tensor \Gamma^{\tensor s+t}\tensor
%N\ar[d]^-{d_{\text{cobar}}}
%\\{\let\scriptstyle\textstyle\substack{(M\tensorD \Phi^{\tensorD s+2})\cotensor_\Gamma
%(\Gamma^{\tensorD t+1}\tensorD N)\\\dsum (M\tensorD
%\Phi^{\tensorD s+1})\cotensor_\Gamma (\Gamma^{\tensorD t+2}\tensorD N)}}\ar[r]^-\theta & M\tensor \Gamma^{\tensor s+t+1}\tensor N
%}\end{equation}
%commutes.
Since $S^n$ and $S_c^n$ are maps of chain
complexes of $\Gamma$-comodules, there is an induced map on the tensor
product of chain complexes
$$ (M\tensorD \Gamma^{\tensorD *+1})\tensor (\Phi^{\tensorD *+1}\tensorD N)\to
(M\tensor \Gamma^{*+1})\tensor (\Gamma^{\tensor *+1}\tensor N) $$
and since these are maps of chain complexes of $\Gamma$-comodules, this passes
to a map on the cotensor product
$$ (M\tensorD \Gamma^{\tensorD *+1})\cotensor_\Gamma (\Phi^{\tensorD
*+1}\tensorD N)\to (M\tensor \Gamma^{*+1})\cotensor_\Gamma (\Gamma^{\tensor
*+1}\tensor N) .$$
%The differential is $d_\Phi\tensor \Id + (-1)^s\Id\tensor d_\Gamma$.
Then $\til{\theta}$ is formed by post-composing with the map
$$ e:(M\tensor \Gamma^{\tensor t+1})\cotensor_\Gamma (\Gamma^{\tensor s+1}\tensor
N) \to M\tensor \Gamma^{t+s}\tensor N  $$
which takes $m[a_1|\dots|a_t]a_{t+1}\tensor b_0[b_1|\dots|b_s]n \mapsto
\epsilon(a_{t+1}b_0)m[a_1|\dots|a_t|b_1|\dots|b_s]n$.
To see this is a chain map, it suffices to check the following diagram
commutes.
$$ \xymatrix@C=40pt{
(M\tensor \Gamma^{\tensor t+1})\cotensor_\Gamma (\Gamma^{\tensor s+1}\tensor
N)\ar[r]^-{\Id\tensor \epsilon\tensor
\Id}\ar[d]_-{d_{\attop{\text{double}\\\text{complex}}}} & M\tensor \Gamma^{\tensor t}\tensor 
\Gamma^{\tensor s}\tensor N\ar[d]^-{d_{\text{cobar}}}
\\ {\let\scriptstyle\textstyle \substack{(M\tensor \Gamma^{\tensor
t+1})\cotensor_\Gamma (\Gamma^{\tensor s+2}\tensor N) \\\dsum\ (M\tensor
\Gamma^{\tensor t+2})\cotensor_\Gamma (\Gamma^{\tensor s+1}\tensor N)}}\ar[r]^-{\Id\tensor \epsilon\tensor \Id} &
M\tensor \Gamma^{\tensor t+s+1}\tensor N 
}$$
This requires keeping track of signs: the double complex differential is
$d_\Gamma\tensor \Id + (-1)^t\Id\tensor d_\Phi$, or more explicitly:
\begin{align*}
a_0[a_1|\dots|a_t]a_{t+1}\tensor b_0[b_1|\dots|b_s]b_{s+1} & \mapsto \sum_i
(-1)^ia_0[\dots| a'_i|a''_i| \dots]a_{t+1}\tensor b_0[b_1|\dots|b_s]b_{s+1} 
\\ & \hspace{20pt}+ \sum_i (-1)^{i+t}a_0[a_1|\dots|a_t]a_{t+1}\tensor b_0[\dots|b'_i|b''_i|\dots]b_{s+1}
\itext{and the cobar differential is}
a_0[a_1|\dots|a_t|b_1|\dots|b_s]b_{s+1} & \mapsto \sum_i
(-1)^ia_0[a_1|\dots|a'_i|a''_i|\dots|b_1|\dots|b_s]b_{s+1} 
\\ &\hspace{20pt} + \sum_i
(-1)^{t+i}a_0[a_1|\dots|a_t|b_1|\dots|b'_i|b''_i|\dots|b_s]b_{s+1}.
\end{align*}
In particular, notice that, on the bottom left composition, the terms
corresponding to $a_0[\dots|a'_{t+1}]a''_{t+1}\tensor b_0[\dots]b_{s+1}$ cancel
in $M\tensor \Gamma^{\tensor t+s+1}\tensor N$ with the terms corresponding to
$a_0[\dots]a_{t+1}\tensor b'_0[b''_0|\dots]b_{s+1}$.
\end{proof}

While $\til{\theta}$ is not filtration-preserving, we will show that its
restriction to $(M\tensorD \Gamma^{\tensorD t+1})\cotensor_\Gamma\,\mathcal{N}\DDPhi s$ is.

\begin{lemma}\label{shear-restriction-G}
The iterated shear map $S:\Gamma^{\tensorD s+1}\tensorD N\to 
\Gamma^{\tensorL s+1}\tensor N$ restricts to an isomorphism $\mathcal{N}\DDPhi
s(N)\to \Gamma\cotensor_\Sigma G(s)\cotensor_\Sigma N$.
\end{lemma}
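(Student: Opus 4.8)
The plan is to run an induction on $s$ that peels off one tensor factor at a time, taking Lemma~\ref{shear-cotensor} as the base case $s=0$ (where there is no normalization, $G(0)\cotensor_\Sigma N = N$, and the claim is exactly $S\colon\Phi\tensorD N\ttoo{\isom}\Gamma\cotensor_\Sigma N$). The inductive step rests on a \emph{reduced} companion to Lemma~\ref{shear-cotensor}: I would first show that the single shear restricts to an isomorphism $S\colon\bar{\Phi}\tensorD M\ttoo{\isom}G\cotensor_\Sigma M$ for every $\Gamma$-comodule $M$. This is tractable because, for $a\in\Phi=\Gamma\cotensor_\Sigma k$, the conditions $\epsilon=0$ and $\pi=0$ coincide: applying $\epsilon\tensor\Id$ to the coinvariance identity $\sum a'\tensor\pi(a'')=a\tensor 1$ gives $\pi(a)=\epsilon(a)\cdot 1$, so $\bar{\Phi}=G\cap\Phi$. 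The shear intertwines the unit summand $k\subset\Phi$ with the copy $\psi(M)$ of $M$ inside $\Gamma\cotensor_\Sigma M$; the containment $S(\bar{\Phi}\tensorD M)\subseteq G\cotensor_\Sigma M$ is then immediate since $a\in G$ and $G$ is an ideal force $\sum am'\tensor m''$ into $G\cotensor_\Sigma M$, and equality follows from the splitting $\Gamma\cotensor_\Sigma M\isom M\dsum(G\cotensor_\Sigma M)$ afforded by the injectivity of $\Gamma$ over $\Sigma$.

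Granting the reduced sub-lemma, I would exploit that the shear is an isomorphism of cosimplicial objects $\DD\bullet(N)\to\DL\bullet(N)$ (Proposition~\ref{shear-iso-on-cosimplicial-objects}). Since $\Phi$ is closed under the multiplication codegeneracies, this restricts over the $\Phi$-subobjects and carries codegeneracies to codegeneracies, sending the multiplications $\mu_i$ to the counit maps $\epsilon_i$. Hence $S$ takes the normalized subcomplex $\mathcal{N}\DDPhi{s}(N)=\intss_i\ker(\mu_i)$ onto $\intss_i\ker(\epsilon_i)$ intersected with the image $S(\Phi^{\tensorD s+1}\tensorD N)$. Iterating Lemma~\ref{shear-cotensor} identifies this image with the $(s+1)$-fold cotensor $\Gamma\cotensor_\Sigma\cdots\cotensor_\Sigma\Gamma\cotensor_\Sigma N$, while $\intss_i\ker(\epsilon_i)$ imposes exactly that each of the $s$ inner $\Gamma$-slots lie in $\bar{\Gamma}$. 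Applying the reduced sub-lemma at each inner factor—upgrading the $\bar{\Gamma}$-constraint to a $G$-constraint in the presence of the ambient $\Sigma$-cotensor matching conditions—then produces $\Gamma\cotensor_\Sigma G\cotensor_\Sigma\cdots\cotensor_\Sigma G\cotensor_\Sigma N = \Gamma\cotensor_\Sigma G(s)\cotensor_\Sigma N$. As $S$ is already a bijection on the ambient spaces (Lemma~\ref{shear}), it suffices to check the two containments $S(\mathcal{N}\DDPhi{s}(N))\subseteq\Gamma\cotensor_\Sigma G(s)\cotensor_\Sigma N$ and $S^{-1}(\Gamma\cotensor_\Sigma G(s)\cotensor_\Sigma N)\subseteq\mathcal{N}\DDPhi{s}(N)$.

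The main obstacle is not the reduced sub-lemma itself but justifying that the iterated shear $S$ decomposes compatibly into these single-factor shears, and verifying the two containments elementwise. Because the iterated shear mixes adjacent tensor slots multiplicatively, its effect on a normalized tensor is not visible factor-by-factor, so I expect to need the explicit formulas for $S$ and $S^{-1}$ established in the appendix to make the peeling rigorous. The $S^{-1}$-containment is the more delicate direction: one must check both that $S^{-1}$ returns $\Gamma\cotensor_\Sigma G(s)\cotensor_\Sigma N$ into the $\Phi$-subspace $\Phi^{\tensorD s+1}\tensorD N$ (the iterated inverse of Lemma~\ref{shear-cotensor}) and, using $G\subseteq\bar{\Gamma}=\ker(\epsilon)$, that it lands in the simultaneous kernels of the codegeneracies $\mu_i$ defining $\mathcal{N}\DDPhi{s}(N)$. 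Once these elementwise checks are in hand the induction closes; tracking the $\Sigma$-bicomodule structure on the $G$ factors and the signs is routine by comparison.
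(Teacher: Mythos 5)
Your overall strategy---transport the normalization condition across the shear using Proposition \ref{shear-iso-on-cosimplicial-objects}, identify $S(\Phi^{\tensorD s+1}\tensorD N)$ with the iterated cotensor product via \eqref{Phi-shear}, and then convert the resulting $\bar{\Gamma}$-conditions on the inner slots into $G$-conditions---is viable and is organized genuinely differently from the paper's proof, which simply verifies the two containments elementwise from the explicit formulas of Lemmas \ref{iterated-shear} and \ref{iterated-inverse-shear} (for $S^{-1}$ the point is that $c(g')g''=\epsilon(g)=0$ for $g\in G$, so every codegeneracy $\mu_i$ kills the image; for $S$ the point is that $G=\bar{\Phi}\Gamma$ together with a cancellation of the terms where a slot ${x_k}_{(k+1)}$ is a scalar, which uses $\mu_k=0$ on $\mathcal{N}$). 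But there is a real gap at your final step. What you need is the identity
$$\Big(\Gamma\cotensor_\Sigma\cdots\cotensor_\Sigma\Gamma\cotensor_\Sigma N\Big)\ints\Big(\Gamma\tensor\bar{\Gamma}^{\tensor s}\tensor N\Big)\;=\;\Gamma\cotensor_\Sigma G(s)\cotensor_\Sigma N,$$
and this is \emph{not} an application of your reduced sub-lemma $S(\bar{\Phi}\tensorD M)\isom G\cotensor_\Sigma M$: that sub-lemma describes the image of a single shear, whereas the displayed identity is a statement purely about the target, with no shear in sight. The phrase ``upgrading the $\bar{\Gamma}$-constraint to a $G$-constraint in the presence of the ambient $\Sigma$-cotensor matching conditions'' is precisely the content that must be proven, and nothing in your proposal proves it.

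The good news is that the identity is true and has a short direct proof you should supply. The containment $\supseteq$ is clear since $G$ is a $\Sigma$-sub-bicomodule of $\Gamma$ contained in $\bar{\Gamma}$. For $\subseteq$, suppose $y$ lies in the iterated cotensor product and $\epsilon$ applied to the $j$-th slot kills $y$ for each $2\le j\le s+1$; apply $\epsilon$ to the $j$-th slot of the cotensor identity relating slots $j-1$ and $j$. Since $(\Id\tensor\epsilon)\circ(\pi\tensor\Id)\circ\Delta=\pi$, the side coming from the left $\Sigma$-coaction on slot $j$ becomes $\pi$ applied to slot $j$ of $y$, while the side coming from the right coaction on slot $j-1$ vanishes because it factors through $\epsilon_j(y)=0$; hence slot $j$ of $y$ lies in $G=\ker\pi$. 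With that lemma in hand your argument closes \emph{without} the elementwise checks of your third paragraph---which is desirable, since that fallback is essentially the paper's proof and, in the forward direction, requires the non-obvious scalar-term cancellation noted above. Your remaining side claims ($\bar{\Phi}=G\ints\Phi$, the reduced sub-lemma itself, the base case $s=0$, and the fact that $S$ carries $\intss_i\ker\mu_i$ to $\intss_i\ker\epsilon_i$) are all correct.
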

The proof is postponed to the appendix.

\begin{corollary}\label{theta-filtration-preserving}
$\theta$ is filtration-preserving.
\end{corollary}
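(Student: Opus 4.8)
The plan is to reduce the statement to the computation of the shear isomorphism's image recorded in Lemma~\ref{shear-restriction-G}, after which filtration-preservation becomes a bookkeeping check on the collapse map $e$. Recall that $\theta$ is the restriction of $\til\theta = e\circ(S_c^n\tensor S^n)$ to the summand $(M\tensorD\Gamma^{\tensorD t+1})\cotensor_\Gamma(\mathcal{N}\DDPhi s(N))$, and that $\til\theta$ has already been shown to be a chain map in Lemma~\ref{til-theta-chain-map}; so the only thing left to verify is that this summand (which sits in filtration $s$ of the total complex) is carried into $F^s C_\Gamma^n(M,N)$, namely into cobar chains with at least $s$ of their entries lying in $G = \ker(\Gamma\to\Sigma)$.

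First I would apply the iterated shear $S$ to the right-hand tensor factor. By Lemma~\ref{shear-restriction-G}, $S$ carries $\mathcal{N}\DDPhi s(N)$ isomorphically onto $\Gamma\cotensor_\Sigma G(s)\cotensor_\Sigma N \subseteq \Gamma\tensorL\Gamma^{\tensor s}\tensor N$. Concretely this says that after the shear the right factor is a sum of tensors $b\tensor b_1\tensor\dots\tensor b_s\tensor n$ whose $s$ middle entries $b_1,\dots,b_s$ all lie in $G$.

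Next I would apply $e$, which sends $(m|a_1|\dots|a_t|a)\tensor(b|b_1|\dots|b_s|n)$ to $\epsilon(ab)\,m|a_1|\dots|a_t|b_1|\dots|b_s|n$. The point is that $e$ contracts only the two boundary entries $a$ and $b$ through the counit, leaving $b_1,\dots,b_s$ intact as entries of the output cobar chain. Since each $b_i$ lies in $G$ by the previous step, the image has at least $s$ of its $n = s+t$ entries in $G$, so it lands in $F^s C_\Gamma^n(M,N)$. As every summand with $s\geq s_0$ is therefore sent into $F^s\subseteq F^{s_0}$, the map $\theta$ preserves the total-complex filtration, which is exactly the assertion.

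The one step carrying any real weight is the appeal to Lemma~\ref{shear-restriction-G}; granting it, the argument is a direct count. The only subtlety to confirm is that the $s$ entries $e$ keeps are precisely those the lemma places in $G$, and that contracting $a$ and $b$ via $\epsilon$ neither introduces nor removes $G$-membership among the surviving entries---both of which are transparent from the explicit formula for $e$.
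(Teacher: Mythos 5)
Your argument is correct and is exactly the intended one: the paper's proof of this corollary is the single sentence ``This is a direct consequence of Lemma \ref{shear-restriction-G},'' and your write-up simply unpacks that implication (shear the right factor into $\Gamma\cotensor_\Sigma G(s)\cotensor_\Sigma N$, then note that $e$ retains the $s$ middle entries, which lie in $G$). No gaps; you have just made explicit what the paper leaves implicit.
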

\begin{proof}
This is a direct consequence of Lemma \ref{shear-restriction-G}.
\end{proof}
\begin{corollary}\label{goal-left-vertical-map}
There are isomorphisms
$$\mathcal{N}\DDPhi 0(G(s)\cotensor_\Sigma N)=\Phi\tensorD (G(s)\cotensor_\Sigma
N)\ttoo{S\tensor \Id} \Gamma\cotensor_\Sigma G(s)\cotensor_\Sigma N \ttoo{S^{-1}} \mathcal{N}\DDPhi s(N).$$
This gives the left vertical isomorphism in \eqref{CESS-filtration-goal}.
\end{corollary}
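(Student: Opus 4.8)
The plan is to build the required isomorphism $S^{-1}\colon \mathcal{N}\DDPhi{0}(G(s)\cotensor_\Sigma N)\to \mathcal{N}\DDPhi{s}(N)$ as a composite of two isomorphisms that are already available, and then apply $(M\tensorD \Gamma^{\tensorD *})\cotensor_\Gamma(-)$ to obtain the left vertical map of \eqref{CESS-filtration-goal}. The first equality $\mathcal{N}\DDPhi{0}(G(s)\cotensor_\Sigma N)=\Phi\tensorD(G(s)\cotensor_\Sigma N)$ is immediate from the definition of the normalized complex, since $\bar{\Phi}^{\tensorD 0}$ is trivial and $\mathcal{N}\DDPhi{0}=\DDPhi{0}=\Phi\tensorD(-)$.

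For the arrow $S\tensor\Id$, I would apply Lemma \ref{shear-cotensor} with the comodule $M$ of that lemma taken to be $G(s)\cotensor_\Sigma N$. This produces the shear isomorphism $\Phi\tensorD(G(s)\cotensor_\Sigma N)\too{\isom}\Gamma\cotensor_\Sigma(G(s)\cotensor_\Sigma N)=\Gamma\cotensor_\Sigma G(s)\cotensor_\Sigma N$, where the last equality is associativity of the cotensor product. The one point to check is that $G(s)\cotensor_\Sigma N$ genuinely carries a left $\Gamma$-comodule structure: this comes from viewing the leftmost copy of $G$ as a $(\Gamma,\Sigma)$-bicomodule (restricting the $(\Gamma,\Gamma)$-bicomodule $\Gamma$) and taking the induced left $\Gamma$-coaction. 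Because this coaction involves only the leftmost factor, the shear map acts nontrivially only on $\Phi$ together with the front copy of $G$ and is the identity on everything to its right, which is precisely why it is recorded as $S\tensor\Id$.

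The arrow $S^{-1}$ is then defined to be the inverse of the isomorphism $\mathcal{N}\DDPhi{s}(N)\too{\isom}\Gamma\cotensor_\Sigma G(s)\cotensor_\Sigma N$ furnished by Lemma \ref{shear-restriction-G}. Composing, $S^{-1}\circ(S\tensor\Id)$ is an isomorphism $\Phi\tensorD(G(s)\cotensor_\Sigma N)\to \mathcal{N}\DDPhi{s}(N)$ of $\Gamma$-comodules, and applying $(M\tensorD\Gamma^{\tensorD *})\cotensor_\Gamma(-)$ yields the claimed left vertical map; it is automatically a chain map for the $E_0$-differential, which is carried entirely by the $M\tensorD\Gamma^{\tensorD *}$ factor while the right-hand factor is fixed. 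The only content beyond bookkeeping is that the single shear of Lemma \ref{shear-cotensor} and the iterated shear of Lemma \ref{shear-restriction-G} land in the same object $\Gamma\cotensor_\Sigma G(s)\cotensor_\Sigma N$, and I expect that matching to be the one delicate point of the argument. Once the bicomodule structure on $G$ is pinned down this is straightforward, so the real obstacle lives not in this corollary but in Lemma \ref{shear-restriction-G} itself (proved in the appendix); granting that lemma, the corollary is formal.
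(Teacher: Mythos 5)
Your argument is correct and is essentially the paper's own (implicit) proof: the corollary is just the composite of the restricted shear isomorphism of Lemma \ref{shear-cotensor}, applied to the $\Gamma$-comodule $G(s)\cotensor_\Sigma N$ (with the leftmost factor $G=\bar{\Phi}\Gamma\subset\Gamma$ supplying the left $\Gamma$-coaction, as you note), with the inverse of the isomorphism of Lemma \ref{shear-restriction-G}, whose appendix proof carries all the real content. Your observation that the induced map on $(M\tensorD\Gamma^{\tensorD *})\cotensor_\Gamma(-)$ is automatically a chain map for the $E_0$-differential is also right, since that differential lives entirely in the left factor.
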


Our next task is to define the map $\beta$ in \eqref{CESS-filtration-goal} and
show it is a chain equivalence. Most of the work for that is done in Lemma
\ref{G-cobar-quotient}; the next lemma is helpful for that, and the result is
summarized in Corollary/ Definition \ref{corollary-definition}.
\begin{lemma}\label{M-cotensor-Sigma}
For fixed $s$, there is an isomorphism of complexes $F^s/F^{s+1}C_\Gamma(M,N)=E_0^{s,*}(M,N) \isom M\cotensor_\Sigma
E_0^{s,*}(M,\Sigma)\cotensor_\Sigma N$.
\end{lemma}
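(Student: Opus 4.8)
The plan is to describe the associated graded complex $E_0^{s,*}(M,N)=F^s/F^{s+1}C_\Gamma(M,N)$ explicitly and to observe that its differential $d_0$ is assembled entirely out of $\Sigma$-comodule data, so that the outer coefficients $M$ and $N$ enter only through their induced $\Sigma$-coactions and can be stripped off by the counit isomorphisms $M\cotensor_\Sigma\Sigma\isom M$ and $\Sigma\cotensor_\Sigma N\isom N$. First I would fix a $k$-linear splitting $\bar\Gamma\isom G\dsum\bar\Sigma$ of $G\into\bar\Gamma$ (available since $\Gamma\onto\Sigma$ splits over $k$, as $\Gamma$ is $\Sigma$-injective). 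With respect to it, the filtration degree of a normalized cobar word $m[a_1|\dots|a_n]\nu$ is exactly the number of letters lying in $G$, so $E_0^{s,t}(M,N)$ is identified with the span of all length-$(s+t)$ words having exactly $s$ letters in $G$ and $t$ letters in $\bar\Sigma$, in every interleaving, with $M$ and $N$ at the two ends.

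Next I would identify the induced differential $d_0\colon E_0^{s,t}\to E_0^{s,t+1}$ as the part of the cobar differential that preserves the number of $G$-letters. A $\bar\Sigma$-letter is split by the reduced coproduct $\bar\Delta_\Sigma$; the coactions of $M$ and $N$ contribute only their induced $\Sigma$-coactions, since their $G$-components raise filtration and die in the associated graded; and a $G$-letter is split via the map $G\to G\tensor\bar\Sigma\dsum\bar\Sigma\tensor G$ obtained from $\bar\Delta_\Gamma$ by projecting the complementary factor to $\bar\Sigma$. This last map is well defined precisely because $G=\ker(\Gamma\onto\Sigma)$ is a Hopf ideal, hence a two-sided coideal, so that $\bar\Delta(G)\subseteq G\tensor\bar\Gamma+\bar\Gamma\tensor G$ and the filtration-raising $G\tensor G$ part is killed; the two surviving components are exactly the canonical $\Sigma$-bicomodule structure maps of $G$. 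Consequently $E_0^{s,*}(M,N)$ is the ($\Sigma$-)cobar complex of the string of $\Sigma$-comodules $(M,G,\dots,G,N)$ with $s$ interior copies of $G$, i.e. a total complex of $s+1$ blocks of $\Sigma$-cobar differentials glued along the $s$ frozen $G$-letters.

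Finally I would factor out the coefficients. Because every summand of $d_0$ uses only $\Sigma$-colinear maps, the complex depends on $M$ and $N$ solely through their $\Sigma$-coactions at the ends; writing $E_0^{s,*}(\Sigma,\Sigma)$ for the same construction with $\Sigma$ (as a $\Sigma$-bicomodule) in both coefficient slots, the counit isomorphisms above give a degreewise identification $M\cotensor_\Sigma E_0^{s,*}(\Sigma,\Sigma)\cotensor_\Sigma N\isom E_0^{s,*}(M,N)$, and since $\cotensor_\Sigma$ is functorial and $d_0$ is $\Sigma$-colinear this is an isomorphism of complexes, which is the asserted cotensor factorization of $E_0^{s,*}(M,N)$ over $\Sigma$. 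I expect the main obstacle to be the second step: pinning down $d_0$ on the $G$-letters and checking that no cross terms survive in the associated graded. This requires careful sign bookkeeping in the normalized cobar differential together with a precise use of the coideal identity for $G$, so as to see that splitting a $G$-letter contributes exactly the two components of its $\Sigma$-bicomodule coaction and that a $\bar\Sigma$-letter produced adjacent to a $G$-letter matches the gluing built into the cotensor product. Once $d_0$ is correctly recognized as $\Sigma$-cobar data, the factorization in the third step is formal.
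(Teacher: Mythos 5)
Your proof is correct and rests on the same key observation as the paper's: in the associated graded, any component of the cobar differential that puts a coproduct or coaction factor into $G$ raises filtration and dies, so $E_0^{s,*}$ only sees $\Sigma$-comodule data, and the coefficient can then be peeled off via the isomorphism $N\isom \Sigma\cotensor_\Sigma N$. The paper's proof is terser --- it verifies this only for the face map coming from the coaction on $N$ and concludes $E_0^{s,*}(M,N)\isom E_0^{s,*}(M,\Sigma)\cotensor_\Sigma N$, which is all that is used later and is evidently what the displayed statement (whose middle term still carries $M$) is meant to assert --- whereas you go further, identifying all of $E_0^{s,*}$ as an interleaved $\Sigma$-cobar complex on the string $(M,G,\dots,G,N)$ using the coideal property of $G$ and stripping off $M$ as well; that extra structure is not needed for this lemma, though it correctly anticipates the role of the $\Sigma$-bicomodule structure on $G$ in the subsequent Lemma on the $G$-cobar quotient.
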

In particular, $E_0^{s,*}(M,N)$ only depends on the $\Sigma$-coaction on
$N$, not the full $\Gamma$-coaction. We will abuse notation by writing
$E_0^{s,*}(M,N)$ where $N$ has a $\Sigma$-coaction and not a $\Gamma$-coaction
(specifically, we do this for $N=G$).
\begin{proof}
We begin by showing that 
$F^s/F^{s+1}C_\Gamma(M,N)$ only depends on the $\Sigma$-coaction on
$N$: given $x= m[\gamma_1|\dots|\gamma_n]\nu$ in $F^sC_\Gamma(M,N)$, the term
$m[\gamma_1|\dots|\gamma_n|\nu']\nu''$ in $d(x)$ is in $F^{s+1}$ if $\nu'\in G$.
So, if we write $\psi(\nu) = \sum \underline{\nu}'|\underline{\nu}''$ for the coaction
$\psi:N\to \Sigma\tensor N$, we can say that $d(x)\equiv \sum
m[\gamma_1|\dots|\gamma_n|\underline{\nu}']\underline{\nu}''$ in $F^s/F^{s+1}C_\Gamma^{n+1}(M,N)$.

We have an isomorphism $\psi:N\too{\isom} \Sigma\cotensor_\Sigma N$ of
$\Sigma$-comodules, where the coaction on the right hand side is $\sigma\tensor
\nu \mapsto \sigma'\tensor \sigma''\tensor \nu$. This shows that the following diagram commutes
$$ \xymatrix{
E_0^{s,t}(M,N)\ar[r]^-{\psi}\ar[d]_-d & E_0^{s,t}(M,\Sigma)\cotensor_\Sigma
N\ar[d]^-d
\\E_0^{s,t+1}(M,N)\ar[r]^-\psi & E_0^{s,t+1}(M,\Sigma)\cotensor_\Sigma N
}$$
and so there is chain complex isomorphism $E_0^{s,*}(M,N) \isom
E_0^{s,*}(M,\Sigma)\cotensor_\Sigma N$ for every $s$.
\end{proof}

\begin{lemma}
[{\cite[A1.3.16]{green}}]\label{G-cobar-quotient}
The map
%\begin{align*}
%\beta: G\cotensor_\Sigma E_0^{0,t}(\Sigma, N) & \to E^{1,t}_0(\Sigma, N)
%\\g[\gamma_1|\dots|\gamma_t]\nu & \mapsto \sum g'[g''|\gamma_1|\dots|\gamma_{s+t}]\nu
%\end{align*}
\begin{align*}
\delta:E_0^{s-1,*}(M,G) & \tto E_0^{s,*}(M,\Sigma)
\\m[a_1|\dots|a_{s-1}]g & \mapstto m[a_1|\dots|a_{s-1}|g']g''.
\end{align*}
is a chain equivalence, where $\sum g'\tensor g''$ is the image of $g\in G$
along the map $\Gamma\too{\Delta} \Gamma\tensor \Gamma\to \Gamma\tensor \Sigma$.
\end{lemma}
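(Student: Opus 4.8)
The plan is to recognize the target complex $E_0^{s,*}(M,\Sigma)$ as being assembled, in the direction transverse to $\delta$, out of a cobar complex of $\Sigma$ with coefficients in a cofree comodule; since such a complex is acyclic, everything collapses onto the one summand that is exactly the image of $\delta$. Concretely, I first record the form of the associated-graded differential $d_0$ on a normalized cobar element $m[c_1|\dots|c_n]\sigma$, with $\sigma\in\Sigma$ and exactly $s$ of the $c_i$ lying in $G$. Because $\Gamma\to\Sigma$ is a map of Hopf algebras, $G$ is a coideal, so $\Delta(G)\subseteq G\tensor\Gamma+\Gamma\tensor G$; hence when $d_0$ splits a $G$-entry via $\Delta$ the only filtration-preserving contributions are the $(G,\Sigma)$- and $(\Sigma,G)$-parts, while splitting a $\Sigma$-entry or the tail $\sigma$ reproduces the reduced coproduct of $\Sigma$. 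The first task is the (routine) verification that $\delta$ is a chain map: this comes down to coassociativity of $\Delta$ together with compatibility of the induced $\Sigma$-coaction on $G$ (the map $g\mapsto\sum g'\tensor g''$) with $d_0$, where one must track the signs from the normalized differential. By Lemma \ref{M-cotensor-Sigma} one may strip off the outer $M\cotensor_\Sigma(-)$ dependence and work with the universal coefficient, which lightens this bookkeeping.

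Next I filter $E_0^{s,*}(M,\Sigma)$ by the number of $\Sigma$-entries lying to the right of the last $G$-entry (the \emph{suffix length}). I claim the induced differential on the associated graded of this secondary filtration is, for a fixed prefix ending in a last $G$-entry $c_j\in G$, precisely the reduced cobar differential of $\Sigma$ with one coefficient equal to $G$ (coacting through $g\mapsto\sum g'\tensor g''$) and the other equal to the cofree comodule $\Sigma$: the suffix $\Sigma$-entries and the tail split by $\Delta_\Sigma$, while the $(G,\Sigma)$-part of $d_0$ applied to $c_j$ contributes the $G$-coaction onto the first suffix slot. Since $\Sigma$ is cofree (injective) over itself, this complex is acyclic away from suffix length $0$, with $H^0\isom G\cotensor_\Sigma\Sigma\isom G$, concentrated in the summand where $c_j$ is the terminal entry and $\sigma$ is the $\Sigma$-tail. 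This summand is exactly $\im(\delta)$, and $\delta$ realizes the isomorphism $G\xrightarrow{\isom}G\cotensor_\Sigma\Sigma$ via $g\mapsto\sum g'\tensor g''$.

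To upgrade this collapse to an actual chain equivalence rather than a bare quasi-isomorphism, I would make the acyclicity explicit through the extra codegeneracy of the cofree comodule $\Sigma$: the operator $h$ that absorbs a terminal $\Sigma$-entry into the tail via the product of $\Sigma$, sending $m[c_1|\dots|c_{n-1}|c_n]\sigma$ with $c_n$ a $\Sigma$-entry to $\pm\,m[c_1|\dots|c_{n-1}](\bar{c}_n\,\sigma)$ (where $\bar c_n$ is the image of $c_n$ in $\Sigma$), and to $0$ when the last entry is a $G$-entry. A direct computation of $d_0h+hd_0$, using the counit relation in $\Sigma$, should show it equals $\mathrm{id}-\delta\rho$ for the evident retraction $\rho$ onto the suffix-length-$0$ part, exhibiting $\delta$ as a chain-homotopy equivalence with homotopy inverse $\rho$. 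I expect the main obstacle to be precisely this last step: checking that the cross terms in $d_0h+hd_0$ telescope correctly—in particular that the $(\Sigma,G)$- and $(G,\Sigma)$-splittings of the last $G$-entry interact with $h$ so as to leave only the identity and the $\delta\rho$ term—and keeping the signs from the normalized cobar differential consistent throughout.
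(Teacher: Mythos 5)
Your overall strategy---filter $E_0^{s,*}(M,\Sigma)$ by the position of the last $G$-entry, recognize each slice as a cobar complex of $\Sigma$ with a cofree coefficient, and contract it---is viable and is in fact close in spirit to what the paper does. But the explicit homotopy you propose is wrong, and it is the load-bearing step. The contracting homotopy of $G\tensor\bar{\Sigma}^{\tensor *}\tensor\Sigma$ (cofree coefficient in the tail) must use the \emph{counit} of that cofree factor, $m[c_1|\dots|c_n]\sigma\mapsto \epsilon(\sigma)\,m[c_1|\dots|c_{n-1}]\til{c}_n$, promoting the last entry to the tail slot; it is the counit axiom $\sum\epsilon(\sigma')\sigma''=\sigma$ that makes the telescope close, and there is no corresponding coalgebra identity for the product $\sum\bar{\sigma}'\sigma''$. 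Concretely, your multiplication-based $h$ already fails for $\Sigma=k[x]/x^3$ (precisely the case relevant to the $b_{10}$ application): since $\Delta(x^2)=x^2\tensor 1+2x\tensor x+1\tensor x^2$, one computes $h([x]x^2)=[\,]\,x^3=0$ while $hd([x]x^2)=h(\pm([x|x^2]1+2[x|x]x))=\pm 3[x]x^2$, so $dh+hd$ acts by $\pm 3$ rather than $1$ on this element (and by $0$ over $\F_3$). Separately, even with the correct slice-wise contraction in hand, you still owe the assembly into a single identity $d_0h+hd_0=\mathrm{id}-\delta\rho$ on all of $E_0^{s,*}(M,\Sigma)$: the cross terms of $h$ with the ``prefix'' part of $d_0$ (splitting of the entries before the last $G$-entry, the $(\Sigma,G)$-splitting of that $G$-entry, and the coaction on $M$) are exactly what you flag as the obstacle and leave unchecked.

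The paper dispatches both issues at once by a different device. It introduces a second filtration $\til{F}$ on $C_\Gamma(M,\Gamma)$ that also counts a $G$-occurrence in the tail slot, observes that the counit-based contraction $m[\gamma_1|\dots|\gamma_n]\gamma\mapsto\epsilon(\gamma)m[\gamma_1|\dots|\gamma_{n-1}]\gamma_n$ preserves $\til{F}$ (though not $F$), so that $\til{F}^s/\til{F}^{s+1}$ is contractible, and then reads off $\delta$ as the boundary isomorphism in the long exact sequence of $0\to F^s/\til{F}^{s+1}\to\til{F}^s/\til{F}^{s+1}\to\til{F}^s/F^s\to 0$, identifying the outer terms with $E_0^{s,*}(M,\Sigma)$ and $E_0^{s-1,*}(M,G)$ and checking that the boundary map lifts to the stated chain-level formula. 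This replaces your global homotopy identity with a purely homological-algebra argument. One further point you should make explicit in any write-up: $G$ is not a $\Gamma$-comodule, so $C_\Gamma^*(M,G)$ is not a complex; only the associated graded $F^{s-1}/F^s$ carries a well-defined differential (via the induced $\Sigma$-coaction on $G$), which is what legitimizes the source of $\delta$ in the first place.
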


\begin{proof}
We introduce a second filtration $\til{F}^s$ which is defined only on
$C_\Gamma(M,\Gamma)$:
$$ \til{F}^sC^n_\Gamma(M,\Gamma)= \cu{m[\gamma_1|\dots|\gamma_n]\gamma\st
\text{ at least $s$ of $\{ \gamma,\gamma_1,\dots,\gamma_n \}$ are in
$G$}}\footnote{This is off by one from the grading convention used in \cite[A1.3.16]{green}.} . $$
There is a short exact sequence of complexes
\begin{equation}\label{filtration-SES} 0\to
F^{s}/\til{F}^{s+1}C_\Gamma^*(M,\Gamma)\to
\til{F}^{s}/\til{F}^{s+1}C_\Gamma^*(M,\Gamma)\to
\til{F}^{s}/F^{s}C_\Gamma^*(M,\Gamma) \to 0.\end{equation}
Unlike $F$, the new filtration
$\til{F}$ preserves the contracting homotopy on $C_\Gamma^*(M,\Gamma)$ given by
$m[\gamma_1|\dots|\gamma_n]\gamma\mapsto
\epsilon(\gamma)m[\gamma_1|\dots|\gamma_{n-1}]\gamma_n$. So
$\til{F}^*C_\Gamma(M,\Gamma)$
is contractible, and so is the quotient complex
$\til{F}^*/\til{F}^{*+1}C_\Gamma(M,\Gamma)$. The short exact sequence
\eqref{filtration-SES} gives rise to a long exact sequence in cohomology, and
contractibility of the middle complex means that the boundary map
\begin{equation}\label{beta-boundary}\delta:H^*(\til{F}^{s}/F^{s}C_\Gamma^*(M,\Gamma))\to
H^*(F^{s}/\til{F}^{s+1}C_\Gamma^{*+1}(M,\Gamma))\end{equation}
is an isomorphism. We will identify $\til{F}^{s}/F^{s}C_\Gamma^*(M,\Gamma)$ and
$F^{s}/\til{F}^{s+1}C_\Gamma^{*+1}(M,\Gamma)$ with the source and target of
the desired map in the lemma statement, and show that $\delta$ can be lifted to
a map on chains.

Levelwise, we can write
\begin{equation}\label{til{F} example}\til{F}^{s+1}C_\Gamma^n(M,\Gamma) =
F^{s+1}C_\Gamma^n(M,\Gamma) + F^{s}C_\Gamma^n(M,G) \end{equation}
but this is an abuse of notation---as $G$ is not a $\Gamma$-comodule,
$C_\Gamma^*(M,G)$ is not a complex (but we can still talk about
$C_\Gamma^n(M,G)\subset C_\Gamma^n(M,\Gamma)$ as a sub-module). 
We will see that this will cease to be a problem upon passing to the associated
graded $E_0$.

For each $n$, we have
\begin{align}\label{F^s-SES-left}
\til{F}^{s}/F^{s}C_\Gamma^*(M,\Gamma) & \isom \big(F^{s}C_\Gamma^n(M,\Gamma) +
F^{s-1}C_\Gamma^n(M,G)\big)\big/F^sC_\Gamma^n(M,\Gamma) 
\\\notag & \isom F^{s-1}/F^sC_\Gamma^n(M,G) 
\label{F^s-SES-right}
\\F^{s}/\til{F}^{s+1}C_\Gamma^{*+1}(M,\Gamma) & \isom
F^{s}C_\Gamma^n(M,\Gamma)\big/\big(F^{s+1}C_\Gamma^n(M,\Gamma)+F^{s}C_\Gamma^n(M,G)\big)
\\\notag &=
\big(F^{s}C_\Gamma^n(M,\Gamma)/F^{s+1}C_\Gamma^n(M,\Gamma)\big)\big/F^{s}C_\Gamma^n(M,G)
\\\notag & \isom F^{s}/F^{s+1}C_\Gamma^n(M,\Sigma).
\end{align}
While $F^sC_\Gamma^*(M,G)$ is not a complex, Lemma \ref{G-cobar-quotient} shows
that $F^{s-1}/F^sC_\Gamma^*(M,G)$ is a complex, and the isomorphisms
$\til{F}^s/F^sC_\Gamma^n(M,\Gamma)\isom F^{s-1}/F^sC_\Gamma^n(M,G)$ and
$F^s/\til{F}^{s+1}C_\Gamma^{*+1}(M,\Gamma)\isom F^s/F^{s+1}C_\Gamma^n(M,\Sigma)$
extend to isomorphisms of complexes.
I claim the boundary map \eqref{beta-boundary} can be identified as the map
\begin{align*}
H^*(F^{s-1}/F^sC^*_\Gamma(M,G)) & \ttoo{\delta}
H^*(F^s/F^{s+1}C^*_\Gamma(\Sigma,N))
\\m[a_1|\dots|a_n]g & \mapstto \sum
m[a_1|\dots|a_n|\underline{\underline{g}}']\underline{\underline{g}}''
\end{align*}
where $\sum \underline{\underline{g}}'|\underline{\underline{g}}''$ is the image
of $g$ under the right $\Sigma$-coaction.
As the boundary map, this is just given by the cobar differential, but in order
for $m[a_1|\dots|a_n]g$ to be a cycle, the sum of all the terms except the one in
the formula for $\delta$ is in $F^sC^{n+1}_\Gamma(M,\Gamma)$. Furthermore, I
claim this can be extended to a map on chains:
\begin{align*}
\delta: F^{s-1}/F^sC^*_\Gamma(M,G) & \tto F^s/F^{s+1}C^*_{\Gamma}(\Sigma,N)
\\m[a_1|\dots|a_n]g & \mapstto
\sum m[a_1|\dots|a_n|\underline{\underline{g}}']\underline{\underline{g}}''.
\end{align*}
It suffices to show that the image of $m[a_1|\dots|a_n]g\in F^s C^*_\Gamma(M,G)$
lies in $F^{s+1}C_\Gamma^*(M,\Sigma)$, and this holds because
$\underline{\underline{g}}''$ is the $(s+1)^{st}$ term in $G$.
\end{proof}

Using Lemma \ref{M-cotensor-Sigma}, we can write this as a map
\begin{align*}
{\let\scriptstyle\textstyle \substack{E_0^{s-1,*}(M,G(s)\cotensor_\Sigma N)\\\ =
E_0^{s-1,*}(M,G)\cotensor_\Sigma N}}\ttoo{\delta}
E_0^{s,*}(M,\Sigma)\cotensor_\Sigma N =
E_0^{s,*} (M,\Sigma\cotensor_\Sigma N)
\ttoo{\isom} E_0^{s,*}(M,N)
\\ m[a_1|\dots|a_n]g|\nu \mapstto
\sum m[a_1|\dots|a_n\underline{\underline{g}}']\underline{\underline{g}}''\nu
\mapstto \sum m[a_1|\dots|a_n|g]\nu.
\end{align*}
\begin{random}{Corollary/ Definition}\label{corollary-definition}
Iterating $\delta$ gives rise to a chain equivalence
$$ \xymatrix{
E_0^{0,*}(M,G(s)\cotensor_\Sigma N)\ttoo{\delta}
E_0^{1,*}(M,G(s-1)\cotensor_\Sigma N) \ttoo{\delta}\dots\ttoo{\delta} E_0^{s,*}(M,N)
}$$
sending $$ m[a_1|\dots|a_n]g_1|\dots|g_s|\nu \mapstto
m[a_1|\dots|a_n|g_1|\dots|g_s]\nu.$$
Let $\beta$ denote this composition.
\end{random}
It is now easy to see that \eqref{CESS-filtration-goal} commutes. Our final task
is to show (4) after \eqref{CESS-filtration-goal}; first we need an easy lemma.

%Given the identifications we made above of the outer terms in the short exact
%sequence, this can be viewed as a map
%$$ \beta: G\cotensor_\Sigma E_0^{s-1,t}(\Sigma,N)\isom
%F^{s-1,t}/F^{s,t}C_\Gamma(G,N)\to F^{s,t+1}/F^{s+1,t+1}C_\Gamma(\Sigma,N)\isom
%E_0^{s,t+1}(\Sigma,N). $$
%Iterating this, we get a chain equivalence $\u{G\cotensor_\Sigma \dots
%\cotensor_\Sigma G}_s\cotensor_\Sigma E_0^{0,t}(\Sigma,N)\to E_0^{s,?}(\Sigma,N)$.

%Boundary maps are defined by doing the differential on representatives, which
%here is the cobar differential
%$$ \beta:g[\u{\gamma_1|\dots|\gamma_{s+t}}_{s-1\text{ terms }\in G}]\nu \mapsto
%\sum g'[g''|\gamma_1|\dots|\gamma_{s+t}]\nu -
%\sum g[\gamma'_1|\gamma''_1|\dots|\gamma_{s+t}]\nu + \dots  $$
%but all the terms except for $\sum g'[g''|\gamma_1|\dots|\gamma_{s+t}]\nu$ are
%zero in the quotient $C_\Gamma(\Gamma,N)\to C_\Gamma(\Sigma,N)$ (and note that
%in the surviving terms where $g'\not\equiv 0$ in $\Sigma$, we have $g''\in G$ so
%this does land in $F^s$).
% 
%The map
%$$ \beta: g[\gamma_1|\dots|\gamma_{s+t}]\nu\mapsto \sum
%g'[g''|\gamma_1|\dots|\gamma_{s+t}]\nu $$
%extends to a map $M\tensor $

\begin{lemma}\label{T}
Let $\Gamma$ be a Hopf algebra and $M$ be an $\Gamma$-comodule.
Then the coaction $\psi:M\to \Gamma\cotensor_\Gamma M$ is an isomorphism with inverse
$T:\Gamma\cotensor_\Gamma M\to M$ sending $a\tensor m\mapsto \epsilon(a)m$.
\end{lemma}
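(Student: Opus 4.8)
The plan is to unwind the definition of the cotensor product and then invoke the counit and coassociativity axioms for the comodule $M$. Recall that $\Gamma\cotensor_\Gamma M$ is the equalizer of the pair of maps $\Delta\tensor\Id$ and $\Id\tensor\psi$ from $\Gamma\tensor M$ to $\Gamma\tensor\Gamma\tensor M$, where $\Gamma$ is regarded as a right $\Gamma$-comodule via its own coproduct $\Delta$. First I would check that $\psi$ actually factors through this equalizer: the identity $(\Delta\tensor\Id)\circ\psi=(\Id\tensor\psi)\circ\psi$ is exactly the coassociativity axiom for the coaction, so $\psi(m)\in\Gamma\cotensor_\Gamma M$ for every $m$. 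The map $T$ is then well defined as the restriction to $\Gamma\cotensor_\Gamma M$ of $\epsilon\tensor\Id:\Gamma\tensor M\to M$.

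The identity $T\circ\psi=\Id_M$ should be immediate: writing $\psi(m)=\sum m'\tensor m''$, we have $T(\psi(m))=\sum\epsilon(m')m''=m$ by the counit axiom for a comodule. This already exhibits $\psi$ as a split injection.

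The one step that genuinely uses membership in the cotensor product is the reverse identity $\psi\circ T=\Id$. Here I would take $x=\sum a_i\tensor m_i$ in $\Gamma\cotensor_\Gamma M$, so that the defining relation $\sum a_i'\tensor a_i''\tensor m_i=\sum a_i\tensor m_i'\tensor m_i''$ holds in $\Gamma\tensor\Gamma\tensor M$. Applying $\epsilon\tensor\Id\tensor\Id$ to both sides, the left-hand side collapses to $\sum a_i\tensor m_i=x$ via the counit axiom $\sum\epsilon(a')a''=a$, while the right-hand side becomes $\sum\epsilon(a_i)\psi(m_i)=\psi\big(\sum\epsilon(a_i)m_i\big)=\psi(T(x))$. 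Thus $x=\psi(T(x))$, and $\psi$ and $T$ are mutually inverse. I do not anticipate any real obstacle; the argument is purely formal, and the only point requiring care is applying the counit to the correct ($\Gamma$-valued) tensor factor of the defining relation so that both sides simplify as indicated.
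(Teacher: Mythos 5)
Your proposal is correct and follows essentially the same route as the paper's proof: coassociativity to show $\psi$ lands in the cotensor product, the counit axiom for $T\circ\psi=\Id$, and applying $\epsilon\tensor\Id\tensor\Id$ to the defining relation of $\Gamma\cotensor_\Gamma M$ to get $\psi\circ T=\Id$. No gaps.
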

\begin{proof}
First we check that the coaction $\psi$ lands in the cotensor product
$\Gamma\cotensor_\Gamma M$: we need to check that $\psi(m)=\sum m'\tensor m''$ lands in
the kernel of $\Delta\tensor \Id - \Id\tensor \psi:\Gamma\tensor M\to \Gamma\tensor
\Gamma\tensor M$. But $\sum (m')'\tensor (m')''\tensor m'' - \sum m'\tensor
(m'')'\tensor (m'')''=0$ by coassociativity.

Next, we check that $T$ is an inverse. We have $T\psi(m)=T(\sum m'\tensor m'') =
\sum \epsilon(m')m''$. This is equal to $m$ by general Hopf algebra properties.
For the other composition, we have $\psi T(a\tensor m)= \sum
\epsilon(a)m'\tensor m''$. Since $a\tensor m$ is in $\Gamma\cotensor_\Gamma M$,
we have $\sum a\tensor m'\tensor m''=\sum a'\tensor a''\tensor m$. Applying
$\epsilon\cdot \Id\tensor \Id$ to this, we have $\sum\epsilon(a)m'\tensor m'' =
\sum \epsilon(a')a''\tensor m = \sum a\tensor m$. So $\psi\circ T=\Id$.
\end{proof}

\begin{proposition}\label{theta-s=0}
$\theta$ induces an isomorphism $\ED{0,*}1\to E_1^{0,*}$.
\end{proposition}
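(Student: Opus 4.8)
The plan is to unwind both spectral sequences in filtration $s=0$ and check that $\theta$ realizes the change-of-rings isomorphism. On the Cartan--Eilenberg side $\mathcal{N}\DDPhi 0(N)=\Phi\tensorD N$, so $\ED{0,*}0(M,N)$ is the complex $\DD*(M)\cotensor_\Gamma(\Phi\tensorD N)$ equipped with the internal $d_0$-differential coming from the cobar resolution $\DD*(M)$ of $M$. Since $\DD*(M)$ is a resolution of $M$ by cofree $\Gamma$-comodules, its cohomology is $\Cotor^*_\Gamma(M,\Phi\tensorD N)$, which Corollary \ref{change-of-rings} identifies with $\Cotor^*_\Sigma(M,N)$. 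On the filtration side, Lemma \ref{M-cotensor-Sigma} identifies $E_0^{0,*}(M,N)$ with the $\Sigma$-cobar complex, whose cohomology is again $\Cotor^*_\Sigma(M,N)$. Thus both $E_1^{0,*}$ groups are abstractly $\Ext^*_\Sigma(M,N)$, and the content of the proposition is that $\theta$ induces precisely this identification on cohomology.

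First I would use the shear isomorphism to put $\theta$ in a tractable form. By Lemma \ref{shear-cotensor}, the right-hand shear occurring in $\theta=e\circ(S_c^n\tensor S^n)$ reduces at $s=0$ to the single isomorphism $S:\Phi\tensorD N\to \Gamma\cotensor_\Sigma N$, while the iterated shear $S_c^n$ converts the diagonal resolution $M\tensorD\Gamma^{\tensorD t+1}$ into the cofree form $M\tensor\Gamma^{\tensor t}\tensorR\Gamma$. Tracing the definition, the map $e$ then applies the counit $\epsilon$ at the cotensor junction---collapsing the last $\Gamma$ of the left factor against the $\Gamma$ of $\Gamma\cotensor_\Sigma N$---and records the remaining entries in $\Sigma$. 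In other words, after the shear, $\theta$ in filtration $0$ is exactly the counit-collapse map $(M\tensor\Gamma^{\tensor t}\tensorR\Gamma)\cotensor_\Gamma(\Gamma\cotensor_\Sigma N)\to M\tensor\Sigma^{\tensor t}\tensor N$.

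The key step is to recognize this collapse as an isomorphism via Lemma \ref{T}. Cotensoring the cofree factor against $\Gamma\cotensor_\Sigma N$ over $\Gamma$ and applying $\epsilon$ at the junction is precisely an instance of the isomorphism $\Gamma\cotensor_\Gamma(-)\isom(-)$ supplied by that lemma; it collapses $\DD*(M)\cotensor_\Gamma(\Gamma\cotensor_\Sigma N)$ onto the $\Sigma$-cobar complex $E_0^{0,*}(M,N)$. I expect the main obstacle to lie in the bookkeeping of this last step: one must verify that the explicit formula for $\theta$ produced by the iterated shears matches the counit collapse on the nose (in particular that the induced projection $\Gamma^{\tensor t}\to\Sigma^{\tensor t}$ is exactly the one recording the associated graded $F^0/F^1$), and that the differential carried along the collapse is the $\Sigma$-cobar differential rather than the full $\Gamma$-cobar differential. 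Once this compatibility with $d_0$ is confirmed, the collapse is an isomorphism of complexes, and the induced map on $H^*$ is the change-of-rings isomorphism; hence $\theta$ induces the desired isomorphism $\ED{0,*}1\to E_1^{0,*}$.
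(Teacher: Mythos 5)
Your proposal is correct and follows essentially the same route as the paper: identify the Cartan--Eilenberg side at $s=0$ with $\Cotor_\Sigma^*(M,N)$ via Corollary \ref{change-of-rings}, identify $E_0^{0,*}(M,N)$ with the $\Sigma$-cobar complex via $F^0/F^1(M\tensor\Gamma^{\tensor t}\tensor N)\isom M\tensor\Sigma^{\tensor t}\tensor N$, and then check that $\theta$, rewritten through the shear isomorphisms, is the counit collapse realizing the change-of-rings equivalence, with Lemma \ref{T} supplying the isomorphism at the cotensor junction. The bookkeeping you defer is exactly what the paper carries out with its explicit commutative diagram involving $q^{t+1}$ and $S_c^{t+1}\tensor T$, so nothing essential is missing from your outline.
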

\begin{proof}
First notice that we have an isomorphism
$$F^0/F^1 (M\tensor \Gamma^{\tensor t}\tensor N) \isom M\tensor \Sigma^{\tensor t}\tensor N $$
since $m[\gamma_1|\dots|\gamma_s]\nu$ is in $F^1$ if any of the $\gamma_i$'s are
in $G$. On the other hand, we have $$H^*(\ED{0,*}1) = H^*((M\tensorD
\Gamma^{\tensorD t+1})\cotensor_\Gamma (\Phi\tensorD N)) =
\Cotor^*_\Gamma(M,\Phi\tensorD N) \isom \Cotor_\Sigma^*(M,N)$$
by the change of rings isomorphism. In the rest of this proof we make this
isomorphism more explicit, enough to see that the isomorphism $\ED{0,*}1\to
E_1^{0,1}$ is induced by $\theta$.

Since the shear map $\Gamma\tensorD \Gamma\to \Gamma\tensorL \Gamma$
commutes with the map $\Gamma\tensor \Gamma\ttoo{q\tensor q} \Sigma\tensor \Sigma$, we
have a commutative diagram
$$ \xymatrix@C=35pt{
(M\tensorD \Gamma^{\tensorD t+1})\cotensor_\Gamma (\Phi\tensorD
N)\ar[d]_-{\Id^{t+2}\tensor S}
\\(M\tensorD \Gamma^{\tensorD t+1})\cotensor_\Gamma
(\Gamma\cotensor_\Sigma N)\ar[r]^{\Id\tensor q^{t+1}\tensor
\Id^2}\ar[d]^{S_c^t\tensor \epsilon\cdot \epsilon\cdot \Id}
& (M\tensorD \Sigma^{\tensorD t+1})\cotensor_\Gamma (\Gamma\cotensor_\Sigma
N)\ar[r]^-\isom & (M\tensorD \Sigma^{\tensorD t+1})\cotensor_\Sigma
N\ar[d]^-{S_c^{t+1}\tensor \epsilon\cdot \Id}
\\F^0/F^1 (M\tensor \Gamma^t \tensor N)\ar[rr]^-\isom && M\tensor \Sigma^t \tensor N
}$$
Note that the left vertical composition is $\theta$, by definition.
The middle horizontal composition is the chain equivalence inducing the change
of rings isomorphism $\Cotor_\Gamma^*(M,\Gamma\cotensor_\Sigma N)\isom
\Cotor_\Sigma(M,N)$. By Lemma \ref{T}, the right vertical map is
$S_c^{t+1}\tensor T$, an isomorphism. So the bottom left vertical map is a chain
equivalence. The top left vertical map is an isomorphism, so $\theta$ is a chain
equivalence.
\end{proof}

\section*{Appendix: The cobar complex and the shear isomorphism}\label{section:cobar}
\def\thesection{A}
\setcounter{theorem}{0}

%\fixme{We will make extensive use of the following identities, which are part of
%the definition of a Hopf algebra.
%\fixme{Move this where needed.}
%\begin{fact}\label{hopf-facts}
%Let $\Gamma$ be a Hopf algebra with antipode $c$, comultiplication $\Delta$,
%unit $\eta$, and coaugmentation $\epsilon$.
%~\begin{enumerate} [label=(\arabic*)]
%\item (Coassociativity) $\sum (x')'\tensor (x')'' \tensor x''=\sum x'\tensor
%(x'')'\tensor (x'')''$

%(This fact is used to make the notation above
%well-defined; in that language, this just reads $\sum x_{(1)}\tensor
%x_{(2)}\tensor x_{(3)} = \sum x_{(1)}\tensor x_{(2)} \tensor x_{(3)}$.)
%\item $\sum c(x')x'' = \epsilon(x) = \sum x'c(x'')$
%\item $\sum \epsilon(x')\tensor x'' = 1\tensor x$
%\item $\sum c(x)'|c(x)'' = \sum c(x'')|c(x')$
%\end{enumerate}
%\end{fact}
%}

In this appendix we record some technical facts about the iterated shear
isomorphism that are needed for the comparison proof in section
\ref{section:CESS-vs-filtration}. 

First we need notation for the iterated coproduct.
\begin{definition} \label{iterated-coproduct-notation}
For a Hopf algebra $\Gamma$ and $\Gamma$-comodule $M$, let $\Delta^n$ denote the iterated coproduct
$$\Delta^n:\Gamma\too{\Delta}\Gamma^{\tensor 2}\too{\Delta} \dots
\too{\Delta}\Gamma^{\tensor n+1}$$ and let $\psi^n$ denote the iterated coaction $M\too{\psi^n} \Gamma^{\tensor n}\tensor M$.
Write $\sum m_{(1)}|\dots|m_{(n+1)}\colonequals
\psi^n(m)$ and $\sum \gamma_{(1)}|\dots|\gamma_{(n+1)}\colonequals \Delta^n(\gamma)$.
(Note that this notation is well-defined because of coassociativity.)
\end{definition}
For example, $\Delta(\gamma) = \sum \gamma'| \gamma'' = \sum
\gamma_{(1)}|\gamma_{(2)}$, and $\sum
\Delta(\gamma_{(1)})|\gamma_{(2)} = \sum
\gamma_{(1)}|\gamma_{(2)}|\gamma_{(3)}$.

\begin{lemma}
\label{iterated-shear} The iterated shear isomorphism $S^n:\Gamma^{\tensorD
n}\tensorD M\to \Gamma\tensorL \Gamma^{\tensor n-1}\tensor M$ is given by
$$ S^n: x_1|\dots|x_n|m \mapsto \sum {x_1}_{(1)}{x_2}_{(1)}\dots
{x_n}_{(1)}m_{(1)}|{x_2}_{(2)}\dots {x_n}_{(2)}m_{(2)}|{x_3}_{(3)}\dots
{x_n}_{(3)}m_{(3)}|\dots| m_{(n+1)}.$$
The iterated shear isomorphism $S_c^n:M\tensorD \Gamma^{\tensorD n}\to
M\tensor \Gamma^{\tensor n-1}\tensorR\Gamma$ is given by
$$ S_c^n: m|x_n|\dots|x_1 \mapsto
\sum m_{(1)}|m_{(2)}{x_n}_{(1)}|m_{(3)}{x_n}_{(2)}{x_{n-1}}_{(1)}|\dots|m_{(n+1)}{x_n}_{(n)}{x_{n-1}}_{(n-1)}\dots
{x_2}_{(2)}x_1.$$
\end{lemma}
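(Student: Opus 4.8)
The plan is to prove both formulas by induction on $n$, organizing the iterated shear as a composite of a single shear with a lower iterate. For $S^n$ I would use the factorization
$$ S^n = S\circ(\Id_\Gamma\tensorD S^{n-1}):\ \Gamma\tensorD(\Gamma^{\tensorD n-1}\tensorD M)\tto \Gamma\tensorD(\Gamma\tensorL\Gamma^{\tensor n-2}\tensor M)\tto \Gamma\tensorL\Gamma^{\tensor n-1}\tensor M, $$
where the first arrow shears the inner $n-1$ factors via the inductive hypothesis (legitimate since $\Gamma\tensorD(-)$ is functorial on comodules and each shear is a comodule isomorphism by Lemma \ref{shear}), and the second arrow is the single shear $S$ applied to the outer diagonal pair. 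The base case $n=1$ is exactly the formula for $S$ in Lemma \ref{shear}, since ${x_1}_{(1)}=x_1$ and $m_{(1)}|m_{(2)}=\psi(m)$.

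For the inductive step I would apply the inductive formula to get $S^{n-1}(x_2|\dots|x_n|m)=\sum v_1|\dots|v_n$ with $v_j={x_{j+1}}_{(j)}\cdots{x_n}_{(j)}m_{(j)}$ and $v_n=m_{(n)}$, so that $\Id_\Gamma\tensorD S^{n-1}$ sends the input to $\sum x_1|v_1|\dots|v_n$. The outer shear $S$ acts on the left factor $v_1$ of the left comodule $\Gamma\tensorL\Gamma^{\tensor n-2}\tensor M$, producing $\sum x_1{v_1}'|{v_1}''|v_2|\dots|v_n$. The key computation is to evaluate $\Delta(v_1)$: because $v_1={x_2}_{(1)}\cdots{x_n}_{(1)}m_{(1)}$ is a product and $\Delta$ is an algebra homomorphism, $\Delta(v_1)=\prod_{k}\Delta({x_k}_{(1)})\cdot\Delta(m_{(1)})$, and coassociativity identifies the splitting of each ${x_k}_{(1)}$ together with the surviving higher pieces ${x_k}_{(2)},\dots,{x_k}_{(k-1)}$ as the full $\Delta^{k-1}$-decomposition, shifting every old index up by one. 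After this consistent renumbering, slot $1$ becomes $x_1{x_2}_{(1)}\cdots{x_n}_{(1)}m_{(1)}={x_1}_{(1)}\cdots{x_n}_{(1)}m_{(1)}$, slot $2$ becomes ${x_2}_{(2)}\cdots{x_n}_{(2)}m_{(2)}$, and the old $v_2,\dots,v_n$ move into slots $3,\dots,n+1$ as their relabelled $(j{+}1)$-versions, reproducing exactly the staircase in the statement.

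The main obstacle is precisely this Sweedler-index bookkeeping: one must verify that re-splitting only the first tensor slot by $\Delta$, combined with coassociativity, reassembles the full $\Delta^{k-1}$-decomposition of each $x_k$ and the full $\psi^n$-decomposition of $m$ with the correct staircase alignment, and that the spectator $x_1$ enters slot $1$ alone (as ${x_1}_{(1)}=x_1$). Everything else is formal. Finally, the formula for $S_c^n$ follows by the mirror-image argument: one factors $S_c^n=S_c\circ(S_c^{n-1}\tensorD\Id_\Gamma)$, shearing from the right, and the identical coassociativity computation—now building the staircase on the right-hand ends of the tensor slots with $x_1$ the bare spectator in the final slot—yields the stated expression.
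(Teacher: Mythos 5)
Your proposal is correct and follows essentially the same route as the paper's proof: induction on $n$ via the factorization $S^n = S\circ(\Id_\Gamma\tensorD S^{n-1})$, with the outer shear acting on the first slot of the left comodule $\Gamma\tensorL\Gamma^{\tensor n-2}\tensor M$ and coassociativity handling the Sweedler reindexing (the paper phrases this as ``the coaction on $y$ just comes from the first component''), and the $S_c^n$ case dispatched as the mirror image. The only cosmetic difference is that you start the induction at $n=1$ rather than $n=2$; both base cases reduce to Lemma \ref{shear}.
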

\begin{proof}
We prove just the first statement, as the second is analogous.
Use induction on $n$. If $n=2$ this is true by definition of $S$. Now suppose
$S^{n-1}$ is given by the formula above. We can write $S^n$ as the composition
$$  \Gamma^{\tensorD n}\tensorD M\ttoo{S^{n-1}} \Gamma\tensorD (\Gamma^{\tensorL
n-1}\tensor M)\ttoo{S} \Gamma\tensorL (\Gamma^{\tensorL n-1}\tensor M) $$
and by the inductive hypothesis the first map sends 
\begin{align*}
x_1|x_2|\dots|x_n|m & \mapsto \sum x_1|{x_2}_{(1)}{x_3}_{(1)}\dots
{x_n}_{(1)}m_{(1)}|{x_3}_{(2)}\dots {x_n}_{(2)}m_{(2)}|\dots|m_{(n)}.
\end{align*}
If we write this as $x_1|y$, then the second map sends this to
$\sum x_1y_{(1)}|y_{(2)}$; remembering that the coaction on $y$ just comes from the
first component, this is:
\begin{align*}
\sum x_1{x_2}_{(1)}{x_3}_{(1)} \dots
{x_n}_{(1)}m_{(1)}|{x_2}_{(2)}{x_3}_{(2)}\dots {x_n}_{(2)}m_{(2)}|{x_3}_{(3)}\dots
{x_n}_{(3)}m_{(3)}|\dots|m_{(n+1)}.  & \qedhere
\end{align*}
\end{proof}

\begin{lemma}\label{iterated-inverse-shear}
The iterated inverse shear isomorphism $S^{-n}:\Gamma\tensorL \Gamma^{\tensor
n-1}\tensor M\to \Gamma^{\tensorD n}\tensorD M$ is given by
$$ S^{-n}: x_1|\dots|x_n|m\mapsto
\sum x_1c(x'_2)|x''_2c(x'_3)|x''_3c(x'_4)|\dots|x''_nc(m')|m''. $$
The iterated inverse shear isomorphism $S_c^{-n}: M\tensor \Gamma^{\tensor
n-1}\tensorR \Gamma \to M\tensorD \Gamma^{\tensorD n}$ is given by
$$ S_c^{-n}:m|x_n|\dots|x_1\mapsto \sum
m'|c(m'')x'_n|c(x'_n)x'_{n-1}|\dots|c(x'_2)x_1. $$
\end{lemma}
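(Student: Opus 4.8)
The plan is to prove the formula for $S^{-n}$ by induction on $n$, in exact parallel with the proof of Lemma \ref{iterated-shear}, and to treat $S_c^{-n}$ by the symmetric argument. The key observation is that I need not verify directly that the displayed map is a two-sided inverse to $S^n$; instead, inverting the factorization established in the proof of Lemma \ref{iterated-shear}, namely $S^n = S\circ S^{n-1}$ where $S^{n-1}$ shears the factors $x_2,\dots,x_n,m$ and $S$ then shears $x_1$, gives at once $S^{-n} = S^{-(n-1)}\circ S^{-1}$. Thus the entire computation reduces to composing two maps whose formulas are already known: the single inverse shear $S^{-1}$ of Lemma \ref{shear} and, by the inductive hypothesis, $S^{-(n-1)}$.

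The base case $n=1$ is literally the formula $S^{-1}: x_1|m\mapsto \sum x_1 c(m')|m''$ of Lemma \ref{shear}. For the inductive step I would first apply $S^{-1}$, viewing $\Gamma\tensorL\Gamma^{\tensor n-1}\tensor M$ as $\Gamma\tensorL Z$ with $Z = \Gamma\tensorL(\Gamma^{\tensor n-2}\tensor M)$ a $\Gamma$-comodule whose coaction is carried by its leading tensor factor $x_2$ through $\Delta$. Since $S^{-1}(x_1\tensor z)=\sum x_1 c(z')\tensor z''$, and here $z'=x_2'$ and $z''=x_2''|x_3|\dots|x_n|m$, this step produces
$$ S^{-1}(x_1|x_2|\dots|x_n|m)=\sum x_1 c(x_2')\,|\,x_2''|x_3|\dots|x_n|m \ \in\ \Gamma\tensorD Z. $$
I would then apply the inductive formula for $S^{-(n-1)}$ to the inner word $x_2''|x_3|\dots|x_n|m$ (with $x_2''$ as the leading letter), which yields $\sum x_2''c(x_3')|x_3''c(x_4')|\dots|x_n''c(m')|m''$; concatenating with the first slot $x_1c(x_2')$ gives exactly the claimed expression for $S^{-n}$.

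The one point that requires care --- and which I regard as the main, though modest, obstacle --- is the Sweedler bookkeeping across the two steps: one must check that the $x_2''$ left behind by $S^{-1}$ is \emph{not} split a second time by $S^{-(n-1)}$ (the inductive formula keeps its leading letter intact), so that the two occurrences $x_2'$ and $x_2''$ in the final answer are precisely the two legs of a single $\Delta(x_2)$. This is guaranteed by coassociativity of $\Delta$, which is exactly what makes the iterated-coproduct notation of Definition \ref{iterated-coproduct-notation} well defined; one must also keep straight at each stage whether the inner comodule carries the left ($\tensorL$) or diagonal ($\tensorD$) coaction. Notably, no antipode identity is needed: because I compute $S^{-n}$ as a composition of known inverse shears rather than by checking $S^n\circ S^{-n}=\Id$ head-on, the cancellations $\sum c(\gamma')\gamma''=\epsilon(\gamma)$ never have to be invoked. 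The argument for $S_c^{-n}$ is identical after reversing the order of the tensor factors and replacing $S^{-1}$, the left coaction, and $\Delta$ by $S_c^{-1}$, the right coaction, and the right-handed iterated coproduct, so I would simply remark that it is analogous, as was done for the forward direction in Lemma \ref{iterated-shear}.
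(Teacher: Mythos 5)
Your proof is correct, but it takes the two inverse shears in the opposite order from the paper, and this is a genuine (and advantageous) difference. The paper factors $S^{-n}$ as $\Gamma\tensorL(\Gamma^{\tensorL n-1}\tensor M)\to\Gamma\tensorL(\Gamma^{\tensorD n-1}\tensorD M)\to\Gamma^{\tensorD n}\tensorD M$: it first applies the inductive $S^{-(n-1)}$ to the inner word $x_2|\dots|x_n|m$ and only then applies $S^{-1}$ to the outer factor. At that point the inner object carries the \emph{diagonal} coaction, so the $y_{(1)}$ appearing in $\sum x_1c(y_{(1)})|y_{(2)}$ is a product of coproducts of every slot, and the bulk of the paper's proof is spent collapsing this using $\sum c(x')|c(x'')=\sum c(x)''|c(x)'$, $\sum c(x')x''=\epsilon(x)$, and the counit axiom. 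You instead invert the factorization $S^n=S\circ(\Id\tensor S^{n-1})$ literally, obtaining $S^{-n}=(\Id\tensor S^{-(n-1)})\circ S^{-1}$, so that $S^{-1}$ acts first while the inner word still carries the \emph{left} coaction (concentrated on $x_2$); the coaction then splits only $x_2$, the inductive step leaves its leading letter intact, and the claimed formula drops out by concatenation with no antipode identities at all. Both factorizations are legitimate --- the paper's implicitly uses naturality of the shear in the comodule variable to commute the two steps --- but yours is the more direct inversion of Lemma \ref{iterated-shear} and eliminates the only nontrivial computation in the paper's argument. Your identification of the one delicate point (that $x_2'$ and $x_2''$ in the final formula are the two legs of a single $\Delta(x_2)$, guaranteed because $S^{-(n-1)}$ does not re-split its leading letter) is exactly the right thing to check, and the treatment of $S_c^{-n}$ by symmetry matches the paper's handling of the right-handed case.
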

\begin{proof}
Again we only prove the first statement, and again this is by induction on $n$.
If $n=1$, this is the definition of $S^{-1}$ in Lemma \ref{shear}.

Assume the formula holds for $n-1$. Write $S^{-n}$ as the composition
$$ \Gamma\tensorL (\Gamma^{\tensorL n-1}\tensor M)\ttoo{S^{-(n-1)}}
\Gamma\tensorL (\Gamma^{\tensorD n}\tensorD M)\ttoo{S^{-1}} \Gamma\tensorD  (\Gamma^{\tensorD n}\tensorD M)$$
and by the inductive hypothesis the first map sends
$$ x_1|x_2|\dots|x_n|m\mapsto \sum
x_1|x_2c(x'_3)|x''_3c(x'_4)|\dots|x''_nc(m')|m''. $$
If we write this as $x_1|y$, then the second map sends this to $\sum
x_1c(y_{(1)})|y_{(2)}$, which is
\begin{align*}
\sum x_1 & c\big((x_2c(x'_3)x''_3c(x'_4) \dots
x''_nc(m')m'')'\big)|x''_2c(x'_3)''|(x''_3)''c(x'_4)''|\dots|(x''_n)''c(m')''|(m'')''
\\ & =\sum x_1c\big({x_2}_{(1)}c({x_3}_{(2)}){x_3}_{(3)}c({x_4}_{(2)}){x_4}_{(3)}\dots
c(m_{(2)})m_{(3)}
\big)|{x_2}_{(2)}c({x_3}_{(1)})|{x_3}_{(4)}c({x_4}_{(1)})|
\\ & \hspace{80pt}\dots|{x_n}_{(4)}c(m_{(1)})|m_{(4)}
\\ & =\sum x_1c\big({x_2}_{(1)}\epsilon({x_3}_{(2)}\dots
{x_n}_{(2)}m_{(2)})\big)|{x_2}_{(2)}c({x_3}_{(1)})|{x_3}_{(3)}c({x_4}_{(1)})|
\\ & \hspace{80pt}\dots|{x_n}_{(3)}c(m_{(1)})|m_{(3)}
\\ & =\sum
x_1c({x_2}_{(1)})|{x_2}_{(2)}c({x_3}_{(1)})|{x_3}_{(2)}c({x_4}_{(1)})|\dots|{x_n}_{(2)}c(m_{(1)})|m_{(2)}.
\end{align*}
Here the first equality uses the fact that $\sum c(x')|c(x'') = \sum
c(x)''|c(x)'$, the second uses the fact that $c(x')x'' = \epsilon(x)$, and the
third uses the fact that $\sum \epsilon(x')|x'' = \sum 1|x$.
\end{proof}

\begin{lemma}
The iterated shear isomorphism $S:\Gamma^{\tensorD *+1}\tensorD N\to
\Gamma^{\tensorL *+1}\tensor N$ restricts to an isomorphism of chain
complexes
\begin{equation}\label{Phi-shear} S:\Phi^{\tensorD *+1}\tensorD N\to
\u{\Gamma\cotensor_\Sigma \dots \cotensor_\Sigma \Gamma}_{*+1}\cotensor_\Sigma N. \end{equation}
\end{lemma}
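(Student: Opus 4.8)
The plan is to induct on the number $s+1$ of tensor factors of $\Gamma$, bootstrapping from the single-variable statement already established in Lemma~\ref{shear-cotensor}. Writing $S^{s+1}$ for the iterated shear on $s+1$ factors, the base case $s=0$ is exactly Lemma~\ref{shear-cotensor}, which restricts the single shear $S\colon\Gamma\tensorD N\to\Gamma\tensorL N$ to an isomorphism $\Phi\tensorD N\too{\isom}\Gamma\cotensor_\Sigma N$. Throughout I will use the standing hypothesis that $\Gamma$ is injective over $\Sigma$, which is what licenses each invocation of Lemma~\ref{shear-cotensor}.

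For the inductive step I would use the factorization of the iterated shear recorded in the proof of Lemma~\ref{iterated-shear}, namely
$$ S^{s+1} \;=\; S\circ(\Id_\Gamma\tensorD S^{s}), $$
where $S^{s}$ acts on the last $s$ copies of $\Gamma$ together with $N$ (leaving the outermost copy of $\Gamma$ in diagonal position) and the final single shear $S$ moves that outermost copy from the diagonal to the left coaction. Applying the inductive hypothesis to the inner $s$ factors, $\Id_\Gamma\tensorD S^{s}$ carries $\Phi\tensorD(\Phi^{\tensorD s}\tensorD N)$ isomorphically onto $\Phi\tensorD P$, where $P\colonequals \underbrace{\Gamma\cotensor_\Sigma\cdots\cotensor_\Sigma\Gamma}_{s}\cotensor_\Sigma N$, regarded as a $\Gamma$-comodule via the left coaction on its outermost $\Gamma$. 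I would then apply Lemma~\ref{shear-cotensor} once more, with coefficient comodule $P$ in place of $N$, to conclude that the remaining single shear restricts to an isomorphism $\Phi\tensorD P\too{\isom}\Gamma\cotensor_\Sigma P$. Since $\cotensor_\Sigma$ is associative in this setting, $\Gamma\cotensor_\Sigma P = \underbrace{\Gamma\cotensor_\Sigma\cdots\cotensor_\Sigma\Gamma}_{s+1}\cotensor_\Sigma N$, which is the desired image.

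To upgrade this from an identification of graded objects to an isomorphism of chain complexes, I would invoke Proposition~\ref{shear-iso-on-cosimplicial-objects}: the ambient shear $S\colon\DD*(N)\to\DL*(N)$ is already an isomorphism of chain complexes, and $\Phi^{\tensorD *+1}\tensorD N=\DDPhi*(N)$ is a subcomplex of $\DD*(N)$ because $\Phi\subseteq\Gamma$ is a unital subalgebra, hence closed under the insertion-of-$1$ cofaces and the multiplication codegeneracies. Thus $S$ restricts to an isomorphism of complexes onto its image, and the graded computation above identifies that image as the right-hand side of \eqref{Phi-shear} with its inherited cobar differential.

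The step I expect to require the most care is the coaction bookkeeping in the inductive passage. One must verify that the $\Gamma$-comodule structure on $P$ arising as the target of $S^{s}$ — the left coaction carried by its outermost $\Gamma$ — is precisely the structure for which Lemma~\ref{shear-cotensor} produces the next cotensor factor, and that the induced left $\Sigma$-coaction on $P$ is the one implicit in the nested cotensor $\Gamma\cotensor_\Sigma P$. This is exactly the compatibility guaranteeing that the single shear $S$ commutes with $\Id_\Gamma\tensorD(-)$ at each stage, so that the composite of the two restrictions really is the restriction of $S^{s+1}$; granting this, the remainder is the associativity of $\cotensor_\Sigma$ together with the injectivity of $\Gamma$ over $\Sigma$ underwriting each application of Lemma~\ref{shear-cotensor}.
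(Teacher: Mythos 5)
Your proof is correct and takes essentially the same route as the paper, whose own argument is simply ``apply Lemma~\ref{shear-cotensor} and iterate the shear map''; you have spelled out the iteration as an induction via the factorization $S^{s+1}=S\circ(\Id_\Gamma\tensorD S^{s})$, which is exactly the decomposition used in the proof of Lemma~\ref{iterated-shear}. The extra care you take with the comodule structure on the intermediate object $P$ and with the chain-complex compatibility is detail the paper leaves implicit, not a different method.
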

\begin{proof}
For any $\Gamma$-comodule $M$, by Lemma \ref{shear-cotensor} the shear isomorphism
gives an isomorphism $\Phi\tensorD N \too{\isom} \Gamma\cotensor_\Sigma N$, and
iterating the shear map gives an isomorphism $\Phi^{\tensorD
s+1}\tensorD N\too{\isom} \u{\Gamma\cotensor_\Sigma\dots \cotensor_\Sigma
\Gamma}_{s+1}\cotensor_\Sigma N$.
\end{proof}

\begin{proof}[Proof of Lemma \ref{shear-restriction-G}]
It suffices to check the inclusions $S^{-1}(\Gamma\cotensor_\Sigma G(s)\cotensor_\Sigma
N)\subset \mathcal{N} \DDPhi s(N)$ and $S(\mathcal{N}\DDPhi s(M)) \subset
\Gamma\cotensor_\Sigma G(s)\cotensor_\Sigma N$. For the first inclusion, use
Lemma \ref{iterated-inverse-shear} to observe that
\begin{align}
\label{SG(s)}S^{-1}(a|g_1|\dots|g_s|n) & =
\sum ac(g'_1)|g''_1c(g'_2)|g''_2c(g'_3)|\dots|g''_sc(n')|n''
\end{align}
and for $1\leq i\leq s$ we have
\begin{align*}
\textstyle \mu_i(\sum ac(g'_1)|g''_1c(g'_2)|g''_2c(g'_3)|\dots|g''_sc(n')|n'')
 & =\textstyle \sum
 ac(g'_1)|g''_1c(g'_2)|\dots|g''_{i-1}c(g'_i)g''_ic(g'_{i+1})|\dots|n''
\\ & =\textstyle \sum
 ac(g'_1)|g''_1c(g'_2)|\dots|g''_{i-1}\epsilon(g_i)c(g'_{i+1})|\dots|n''
\end{align*}
which is zero since $g_i\in G$ (and so $g_i\notin k$). This
shows \eqref{SG(s)} is in $\mathcal{N}\DDPhi s(N)$.

For the other direction, let $x_0|\dots|x_s|n \in \mathcal{N}\DDPhi s(N)\subset
\Phi^{\tensorD s+1}\tensor N$. By Lemma \ref{iterated-shear}, we have
\begin{equation}\label{S-x0-formula} S(x_0|\dots|x_s|n) = \sum {x_0}_{(1)}{x_1}_{(1)}\dots
{x_s}_{(1)}n_{(1)}|{x_1}_{(2)}\dots {x_s}_{(2)}n_{(2)} |{x_2}_{(3)}\dots
n_{(3)}|\dots|n_{(s+2)}.\end{equation}
The goal is to show that each component ${x_k}_{(k+1)}{x_{k+1}}_{(k+1)}\dots
{x_s}_{(k+1)}n_{(k+1)}$ is in $G$ for $1\leq k\leq s$. Since $\Phi$ is a left
$\Gamma$-comodule, if $x\in \Phi$ then $\Delta^j(x) = x_{(1)}|\dots|x_{(j)}$ and so $x_{(j)}\in \Phi$. By assumption, all of the $x_i$'s
are in $\Phi$, and since \eqref{S-x0-formula} involves the iterated coproduct
$\Delta^{i+1}(x_i) = {x_i}_{(1)}|\dots|{x_i}_{(i+1)}$ for
every $i$, we have ${x_i}_{(i+1)}\in \Phi$.
If we could guarantee
${x_k}_{(k+1)}$ were in $\bar{\Phi}$, then we would be done (since $G =
\bar{\Phi}\Gamma$). Instead, we show that the terms where ${x_k}_{(k+1)}=1$ sum to
zero.

The terms where ${x_k}_{(k+1)}=1$ are:
\begin{align}\label{x_k=1} 
\sum {x_0}_{(1)}{x_1}_{(1)}\dots {x_{k-1}}_{(1)}{x_k}_{(1)} & \dots
{x_s}_{(1)}n_{(1)} |\dots|{x_{k-2}}_{(k-1)}{x_{k-1}}_{(k-1)}{x_k}_{(k-1)}\dots
\\\notag & |{x_{k-1}}_{(k)}{x_k}_{(k)}{x_{k+1}}_{(k)}\dots
|{x_{k+1}}_{(k+1)}{x_{k+2}}_{(k+1)}\dots
|\dots|n_{(s+2)}.
\end{align}
The assumption that $x_0|\dots|x_s$ is in $\mathcal{N}\DDPhi s(M)$ implies that
$x_{k-1}x_k=0$ (this is where we use the fact that $k\geq 1$), and hence
$$0 = \Delta^k (x_{k-1}x_k) = \sum {x_{k-1}}_{(1)}{x_k}_{(1)} 
|\dots|{x_{k-1}}_{(k-1)}{x_k}_{(k-1)}|{x_{k-1}}_{(k)}{x_k}_{(k)}.$$
Observing how $\Delta(x_{k-1}x_k)$ is embedded in \eqref{x_k=1}, we have
\eqref{x_k=1} $=0$.
\end{proof}

%\begin{fact}\label{S(a)=alpha}
%If we write $S_c(m[a_s|\dots|a_0]) = \mu[\alpha_s|\dots|\alpha_0]$ and
%$S([b_0|\dots|b_t]\nu) = [\beta_0|\dots|\beta_t]\nu$ then
%\begin{align*}
%S_c(m[a_s|\dots|a_i|1|a_{i-1}|\dots|a_0]) & =
%\sum \mu[\alpha_s|\dots|\alpha'_i|\alpha''_i|\alpha_{i-1}|\dots|\alpha_0]
%\\S([b_0|\dots|b_{i-1}|1|b_i|\dots|b_t]\nu) & = \sum
%[\beta_0|\dots|\beta_{i-1}|\beta'_i|\beta''_i|\dots|\beta_t]\nu.
%\end{align*}
%\end{fact}
%\fixme{Do I actually have to write this out?}

\bibliography{thesis.bib}
\bibliographystyle{alpha}

\end{document}